\def\RSthmtxt{theorem~}\newref{thm}{name = \RSthmtxt}}
\def\RSlemtxt{lemma~}\newref{lem}{name = \RSlemtxt}}
\numberwithin{equation}{section}
\numberwithin{figure}{section}
\theoremstyle{plain}
\newtheorem{thm}{\protect\theoremname}
\theoremstyle{definition}
\newtheorem{defn}[thm]{\protect\definitionname}
\theoremstyle{plain}
\newtheorem*{assumption*}{\protect\assumptionname}
\theoremstyle{remark}
\newtheorem{rem}[thm]{\protect\remarkname}
\theoremstyle{definition}
\newtheorem{example}[thm]{\protect\examplename}
\theoremstyle{plain}
\newtheorem{lem}[thm]{\protect\lemmaname}
\theoremstyle{plain}
\newtheorem{prop}[thm]{\protect\propositionname}
\theoremstyle{remark}
\newtheorem*{rem*}{\protect\remarkname}
\theoremstyle{remark}
\newtheorem*{notation*}{\protect\notationname}
\theoremstyle{plain}
\newtheorem{cor}[thm]{\protect\corollaryname}
\theoremstyle{plain}
\newtheorem*{cor*}{\protect\corollaryname}
\theoremstyle{remark}
\newtheorem*{acknowledgement*}{\protect\acknowledgementname}
\def\RSprotxt{proposition~}\newref{prop}{name = \RSprotxt}
\def\RSdeftxt{definition~}\newref{def}{name = \RSdeftxt}
\def\RSasutxt{assumption~}\newref{enu}{name = \RSasutxt}
\def\RScortxt{corollary~}\newref{cor}{name = \RScortxt}
\providecommand{\acknowledgementname}{Acknowledgement}
\providecommand{\assumptionname}{Assumption}
\providecommand{\corollaryname}{Corollary}
\providecommand{\definitionname}{Definition}
\providecommand{\examplename}{Example}
\providecommand{\lemmaname}{Lemma}
\providecommand{\notationname}{Notation}
\providecommand{\propositionname}{Proposition}
\providecommand{\remarkname}{Remark}
\providecommand{\theoremname}{Theorem}
\begin{document}
\global\long\def\divx{\mathop{\text{div}}}%

\global\long\def\supp{\mathop{\text{supp}}}%

\global\long\def\dn{d_{n}}%

\global\long\def\qn{A_{n}}%

\global\long\def\rn{B_{n}}%

\title[representation formula for degenerate inclusions]{Extending representation formulae for boundary voltage perturbations
of low volume fraction to very contrasted conductivity inhomogeneities
}
\author{Yves Capdeboscq}
\address{Université de Paris and Sorbonne Université, CNRS, Laboratoire Jacques-Louis
Lions (LJLL), F-75006 Paris, France}
\email{yves.capdeboscq@u-paris.fr}
\author{Shaun Chen Yang Ong}

\maketitle
Imposing either Dirichlet or Neumann boundary conditions on the boundary
of a smooth bounded domain $\Omega$, we study the perturbation incurred
by the voltage potential when the conductivity is modified in a set
of small measure. We consider $\left(\gamma_{n}\right)_{n\in\mathbb{N}}$,
a sequence of perturbed conductivity matrices differing from a smooth
$\gamma_{0}$ background conductivity matrix on a measurable set well
within the domain, and we assume $\left(\gamma_{n}-\gamma_{0}\right)\gamma_{n}^{-1}\left(\gamma_{n}-\gamma_{0}\right)\to0$
in $L^{1}(\Omega)$.  Adapting the limit measure, we show that the
general representation formula introduced for bounded contrasts in
\citep{capdeboscq-vogelius-03a} can be extended to unbounded sequences
of matrix valued conductivities.

\section{The general framework}

Given $d\geq2$, let $\Omega\subset\mathbb{R}^{d}$ be an open, bounded
Lipschitz domain. We study the following family of solutions of perturbed
boundary value problems for the conductivity equation. Given $g\in H^{\frac{1}{2}}\left(\partial\Omega\right)$,
we consider $\left(u_{n}\right)_{n\in\mathbb{N}}\in H^{1}\left(\Omega\right)^{\mathbb{N}}$,
a sequence of perturbations of $u_{0}\in H^{1}\left(\Omega\right)$
given by
\begin{equation}
\begin{cases}
-\divx\left(\gamma_{0}\nabla u_{0}\right) & =0\quad\mbox{in}\quad\Omega,\\
u_{0} & =g\quad\mbox{on\quad\ensuremath{\partial\Omega},}
\end{cases}\text{ and }\begin{cases}
-\divx\big(\gamma_{n}\nabla u_{n}\big) & =0\quad\text{in}\quad\Omega,\\
u_{n} & =g\quad\text{on}\quad\partial\Omega.
\end{cases}\label{eq:diri}
\end{equation}
Alternatively, given $h\in H^{-\frac{1}{2}}\left(\partial\Omega\right)$
with $\int_{\partial\Omega}h\text{d\ensuremath{\sigma=0,}}$ we consider
$\left(u_{n}\right)_{n\in\mathbb{N}}\in H^{1}\left(\Omega\right)^{\mathbb{N}}$,
a sequence of perturbations of $u_{0}\in H^{1}\left(\Omega\right)$
given by
\begin{equation}
\begin{cases}
-\divx\left(\gamma_{0}\nabla u_{0}\right) & =0\quad\mbox{in}\quad\Omega,\\
\gamma_{0}\nabla u_{0}\cdot n & =h\quad\mbox{on\quad\ensuremath{\partial\Omega}},\\
\int_{\partial\Omega}u_{0}\text{d}\sigma & =0,
\end{cases}\text{ and }\begin{cases}
-\divx\big(\gamma_{n}\nabla u_{n}\big) & =0\quad\text{in}\quad\Omega,\\
\gamma_{n}\nabla u_{n}\cdot n & =h\quad\mbox{on\quad\ensuremath{\partial\Omega}},\\
\int_{\partial\Omega}u_{n}\text{d}\sigma & =0.
\end{cases}\label{eq:Neum}
\end{equation}
The conductivity coefficients are assumed to be symmetric positive
definite matrix-valued functions with $\gamma_{0}\in W_{\text{loc}}^{2,d}\left(\mathbb{R}^{d};\mathbb{R}^{d\times d}\right)$,
$\gamma_{n}\in L^{\infty}\left(\Omega;\mathbb{R}^{d\times d}\right)$,
and they satisfy the ellipticity condition 
\[
\lambda_{0}|\zeta|^{2}\leq\gamma_{0}\zeta\cdot\zeta\leq\Lambda_{0}|\zeta|^{2}\quad\text{and}\quad\lambda_{n}|\zeta|^{2}\leq\gamma_{n}\zeta\cdot\zeta\leq\Lambda_{n}|\zeta|^{2},\quad\forall\zeta\in\mathbb{R}^{d},
\]
with $0<\lambda_{n}<\Lambda_{n}$ for all $n\in\mathbb{N}$.

\begin{defn}
\label{def:defdn} Given $\left(\qn\right)_{n\in\mathbb{N}}$ and
$\left(\rn\right)_{n\in\mathbb{N}}$, two sequences of measurable
subsets of $\Omega$ whose Lebesgue measures tend to zero, we define
$\dn\in L^{\infty}\left(\Omega;\mathbb{R}^{d\times d}\right)$, a
positive semi-definite matrix valued function by 
\[
d_{n}=\left(\gamma_{n}+\gamma_{0}\gamma_{n}^{-1}\gamma_{0}\right)1_{A_{n}\cup B_{n}}.
\]
\end{defn}

We make the following assumptions on the inclusion sets.
\begin{assumption*}
We assume that the following assumptions are satisfied:
\end{assumption*}
\begin{enumerate}
\item \label{enu:well-within} There exists $K$ an open subset of $\Omega$
with $C^{\infty}$ boundary such that $d(\partial K,\partial\Omega)>0$
and
\[
\bigcup_{n\in\mathbb{N}}\left(\qn\cup\rn\right)\subset K.
\]
\item \label{enu:pertubative} The perturbation vanishes asymptotically
in $L^{1}\left(\Omega\right)$, that is, 
\[
\left\Vert \dn\right\Vert _{L^{1}\left(\Omega\right)}\leq1\text{ and }\lim_{n\to\infty}\left\Vert \dn\right\Vert _{L^{1}\left(\Omega\right)}=0.
\]
\item \label{enu:notlow} There holds, for all $n\geq1$, 
\[
\gamma_{n}=\gamma_{0}\text{ in }\Omega\setminus\left(\rn\cup\qn\right).
\]
The sets $\qn$ and $\rn$ are disjoint and 
\[
\gamma_{n}\geq\gamma_{0}\text{ a.e. in }A_{n},\quad\gamma_{n}\leq\gamma_{0}\text{ a.e. in }B_{n}
\]
 these inequalities being understood in the sense of quadratic forms.
\item \label{enu:extra} If $\qn\neq\emptyset$ for all $n$, we assume
that one of the following integrability properties are satisfied:
\begin{enumerate}
\item \label{enu:mixed} There exists $p>d$ such that 
\[
\limsup_{n\to\infty}\left\Vert \dn\right\Vert _{L^{p}\left(\qn\right)}<\infty.
\]
\item \label{enu:2d} When $d=2$, for some $p>2$ there holds
\[
\limsup_{n\to\infty}\left\Vert \dn\right\Vert _{L^{p}\left(\rn\right)}<\infty.
\]
\item \label{enu:intertwined} There exists $p>\frac{d}{2}$ such that 
\[
\limsup_{n\to\infty}\left\Vert \dn\right\Vert _{L^{p}\left(\qn\right)}<\infty,
\]
 and there exists $\tau<\frac{1}{d-1}$ such that for all $n\in\mathbb{N},$
\[
d\left(A_{n},B_{n}\right)>\left\Vert \dn\right\Vert _{L^{1}\left(\qn\right)}^{\tau}.
\]
\end{enumerate}
\end{enumerate}
For $f\in L^{p}\left(\Omega\right)$, $1\leq p\leq\infty$, $\left\Vert f\right\Vert _{L^{p}\left(\Omega\right)}$
is the canonical $L^{P}\left(\Omega\right)$ norm. For $U\in L^{p}\left(\Omega;\mathbb{R}^{d}\right)$
the notation $\left\Vert U\right\Vert _{L^{p}\left(\Omega\right)}=\left\Vert \left|U\right|_{d}\right\Vert _{L^{p}\left(\Omega\right)}$
where $\left|\cdot\right|_{d}$ denotes the Euclidean norm in $\mathbb{R}^{d}$.
For $A\in L^{p}\left(\Omega;\mathbb{R}^{d\times d}\right)$, $\left\Vert A\right\Vert _{L^{p}\left(\Omega\right)}$
means $\left\Vert \left|A\right|_{F}\right\Vert _{L^{p}\left(\Omega\right)}$
where $\left|\cdot\right|_{F}$ is the Frobenius norm, that is, the
Euclidean norm on $\mathbb{R}^{d\times d}$. We remind the reader
that $\left|AU\right|_{d}\leq\left|A\right|_{F}\left|U\right|_{d}$
a.e. in $\Omega$, even though the Frobenius norm isn't the subordinate
matrix norm associated with the Euclidean distance in $\mathbb{R}^{d}$.
If $A$ and $B$ are non negative symmetric semi-definite matrices
such that $A\leq B$ in the sense of quadratic forms, then $\left|A\right|_{F}\le\left|B\right|_{F}$.
\begin{rem}
Definition~\ref{def:defdn} implies that on $\qn\cup B_{n},$ 
\[
d_{n}=\left(\gamma_{n}-\gamma_{0}\right)\gamma_{n}^{-1}\left(\gamma_{n}-\gamma_{0}\right)+2\gamma_{0}.
\]
Thus 
\[
d_{n}\geq2\gamma_{0}\text{ and }d_{n}>\left(\gamma_{n}-\gamma_{0}\right)\gamma_{n}^{-1}\left(\gamma_{n}-\gamma_{0}\right).
\]
For all $x\in\rn$, $d_{n}>\gamma_{0}\geq\gamma_{n}\geq\gamma_{0}-\gamma_{n}\geq0$.
For all $x\in\qn$, $d_{n}=\gamma_{n}+\gamma_{0}\gamma_{n}^{-1}\gamma_{0}\geq\gamma_{n}\geq\gamma_{n}-\gamma_{0}$.
All in all, there holds

\begin{equation}
\begin{cases}
\left|\dn\right|_{F} & \geq\left|\gamma_{0}\right|_{F}\\
\left|\dn\right|_{F} & \geq\left|\gamma_{n}\right|_{F}\\
\left|\dn\right|_{F} & \geq\left|\gamma_{n}-\gamma_{0}\right|_{F}\\
\left|\dn\right|_{F} & \geq\text{\ensuremath{\left|\left(\gamma_{n}-\gamma_{0}\right)\gamma_{n}^{-1}\left(\gamma_{n}-\gamma_{0}\right)\right|}}_{F}
\end{cases}\text{a.e. on }\qn\cup\rn.\label{eq:notlowalpha}
\end{equation}
We will use these estimates frequently.
\end{rem}

\begin{rem}
Assumption~\ref{enu:well-within} comes from the fact that near the
boundary of the domain, the behaviour of the solution is different,
as the imposed boundary condition plays an increased role.

Assumption~\ref{enu:pertubative} is sufficient and sharp in general.
Example~\exaref{countrex} illustrates the fact that for some inclusions
$u_{n}\not\to u_{0}$ when $\left\Vert d_{n}\right\Vert {}_{L^{1}\left(\Omega\right)}\not\to0$.

Assumption~\ref{enu:notlow} imposes a limitation for anisotropic
conductivities since $\qn\cap\rn=\emptyset$ : there cannot be an
anisotropic inclusion which is very large in one direction and very
small in another. In the case of isotropic materials, it is simply
means that the inhomogeneities are located in $\qn$ and $\rn$.

Assumption~\ref{enu:extra} imposes additional integrability properties
for $\dn$ only on highly conductive inclusions, not on insulating
ones, in general. If $\qn=\emptyset$, \enuref{extra} is always satisfied.
In dimension two, in the presence of both insulating and conductive
inclusions, if they are arbitrarily mixed, an extra integrability
of either of the two types of inclusions suffices. Alternatively,
if the insulating and conductive inclusions are not too finely intertwined,
a weaker integrability condition is required. While any of the conditions
listed under \enuref{extra} is sufficient for our results to hold,
it is not clear that an assumption is necessary.  As far as the authors
are aware, this is the first result allowing both very insulating
and very highly conductive inclusions.
\end{rem}

For any $y\in\Omega$, the Green function $G(\cdot,y)$ is the weak
solution to the boundary value problem given by
\begin{align*}
\divx\big(\gamma_{0}\nabla G(\cdot,y)\big) & =\delta_{y}\text{ in }\Omega\\
G(\cdot,y) & =0\text{ on }\text{\ensuremath{\partial\Omega}}
\end{align*}
 where $\delta_{y}$ denotes the Dirac measure at the point $y$,
and the Neumann function $N\left(\cdot,y\right)$ is the weak solution
to the boundary value problem given by 
\begin{align*}
\divx\big(\gamma_{0}\nabla N(\cdot,y)\big) & =\delta_{y}\text{ in }\Omega\\
\gamma_{0}\nabla N(\cdot,y)\cdot n & =\frac{1}{\left|\partial\Omega\right|}\text{ on }\text{\ensuremath{\partial\Omega}}.
\end{align*}
The main result of this article is that the general representation
formula introduced in \citep{capdeboscq-vogelius-03a} can be extended
to this context. This result was presented in a preliminary form in
\citep{SCYONGTHESIS}.

\begin{thm}
\label{thm:main}Let $\dn$ be given by \defref{defdn}. Suppose that
assumptions \ref{enu:well-within}, \ref{enu:pertubative}, \ref{enu:notlow}
and \ref{enu:extra} hold. Then, there exists a subsequence also denoted
by $\dn$ and a matrix valued function $M\in L^{2}\big(\Omega,\mathbb{R}^{d\times d};\text{d\ensuremath{\mu}\ensuremath{\big)}}$,
where $\mu$ is the Radon measure generated by the sequence $\frac{1}{\|\dn\|_{L^{1}(\Omega)}}\left|\dn\right|_{F}$,
such that for any $y\in\overline{\Omega\setminus K}$,
\begin{itemize}
\item if $u_{n}$ and $u_{0}$ are solutions to \eqref{diri} there holds
\[
u_{n}\left(y\right)-u_{0}\left(y\right)=\|\dn\|_{L^{1}(\Omega)}\int_{\Omega}M_{ij}\left(x\right)\frac{\partial u_{0}}{\partial x_{i}}\left(x\right)\frac{\partial G\left(x,y\right)}{\partial x_{j}}\text{d}\mu(x)+r_{n}(y),
\]
\item if $u_{n}$ and $u_{0}$ are solutions to \eqref{Neum} there holds
\[
u_{n}\left(y\right)-u_{0}\left(y\right)=\|\dn\|_{L^{1}(\Omega)}\int_{\Omega}M_{ij}\left(x\right)\frac{\partial u_{0}}{\partial x_{i}}\left(x\right)\frac{\partial N\left(x,y\right)}{\partial x_{j}}\text{d}\mu(x)+r_{n}^{\prime}(y),
\]
\end{itemize}
in which $r_{n}\in L^{\infty}\big(\overline{\Omega\setminus K}\big)$
(respectively $r_{n}^{\prime}\in L^{\infty}\big(\overline{\Omega\setminus K}\big)$
) satisfies $\frac{\|r_{n}\|_{L^{\infty}(\Omega\setminus K)}}{\|\dn\|_{L^{1}(\Omega)}}\to0$
(resp. $\frac{\|r_{n}^{\prime}\|_{L^{\infty}(\Omega\setminus K)}}{\|\dn\|_{L^{1}(\Omega)}}\to0$
) uniformly in $g\in H^{\frac{1}{2}}(\partial\Omega)$ (resp. $h\in H^{-\frac{1}{2}}(\partial\Omega)$
) with $\left\Vert g\right\Vert _{H^{\frac{1}{2}}(\partial\Omega)}\leq1$
satisfies (resp. $\left\Vert h\right\Vert _{H^{-\frac{1}{2}}(\partial\Omega)}\leq1$).

The matrix valued function $M\in L^{2}\left(\Omega,d\mu\right)$ is
symmetric. The tensor $M$ can be written as $M=D-W$, where W satisfies
\[
0\leq W\zeta\cdot\zeta\leq\zeta\cdot\zeta\text{\ensuremath{\quad\mu} a.e. in }\Omega,
\]
and if $\gamma_{n}$ and $\gamma_{0}$ are isotropic, 
\[
0\leq W\zeta\cdot\zeta\leq\frac{1}{\sqrt{d}}\zeta\cdot\zeta\text{\ensuremath{\quad\mu} a.e. in }\Omega.
\]
 whereas $D$ is limit in the sense of measures of $\left\Vert \dn\right\Vert _{L^{1}(\Omega)}^{-1}\left(\gamma_{n}-\gamma_{1}\right).$
\end{thm}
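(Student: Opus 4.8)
The plan is to follow the strategy of \citep{capdeboscq-vogelius-03a}, with the volume fraction $\varepsilon^{d}$ replaced by $\|\dn\|_{L^{1}(\Omega)}$ and the renormalised characteristic function of the inclusion replaced by the renormalised density $\|\dn\|_{L^{1}(\Omega)}^{-1}|\dn|_{F}$; the new difficulty is to control $u_{n}$ on the inclusions despite the degeneration $\lambda_{n}\to0$, $\Lambda_{n}\to\infty$. First I would record the exact integral identity. In the Dirichlet case, set $w=u_{n}-u_{0}\in H_{0}^{1}(\Omega)$; subtracting the two equations in \eqref{eq:diri} gives $\divx\big(\gamma_{0}\nabla w+(\gamma_{n}-\gamma_{0})\nabla u_{n}\big)=0$ with $(\gamma_{n}-\gamma_{0})\nabla u_{n}$ supported in $A_{n}\cup B_{n}\subset K$ (assumption~\ref{enu:well-within}). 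Testing against $G(\cdot,y)$, which is smooth near $K$ because $y\in\overline{\Omega\setminus K}$ and $d(\partial K,\partial\Omega)>0$, and using that $\gamma_{n}\nabla u_{n}-\gamma_{0}\nabla u_{0}$ is divergence free in $\Omega$ with zero normal trace on $\partial\Omega$ (the boundary contribution at the singularity of $G$ tending to zero since near $y$ both $u_{n}$ and $u_{0}$ solve the same smooth equation), yields
\[
u_{n}(y)-u_{0}(y)=\int_{A_{n}\cup B_{n}}(\gamma_{n}-\gamma_{0})(x)\,\nabla u_{n}(x)\cdot\nabla_{x}G(x,y)\,\mathrm{d}x ,
\]
and the same identity with $G$ replaced by $N$ and $g$ by $h$ in the Neumann case \eqref{eq:Neum}. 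Everything then reduces to the behaviour, tested against $\{\nabla_{x}G(\cdot,y):y\in\overline{\Omega\setminus K}\}$, of the $\mathbb{R}^{d}$-valued measures $\|\dn\|_{L^{1}(\Omega)}^{-1}(\gamma_{n}-\gamma_{0})\nabla u_{n}\,\mathcal{L}^{d}$.

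The heart of the proof is an a priori estimate. Testing the equation for $u_{n}$ with $w$ and the equation for $u_{0}$ with $w$ and subtracting gives the identity
\[
\int_{\Omega}\gamma_{0}\nabla w\cdot\nabla w+\int_{A_{n}\cup B_{n}}(\gamma_{n}-\gamma_{0})\nabla u_{n}\cdot\nabla u_{n}=\int_{A_{n}\cup B_{n}}(\gamma_{n}-\gamma_{0})\nabla u_{n}\cdot\nabla u_{0} ,
\]
whose right-hand side, by the pointwise bound $|(\gamma_{n}-\gamma_{0})\nabla u_{n}|^{2}\le|\dn|_{F}\,\gamma_{n}\nabla u_{n}\cdot\nabla u_{n}$ on $A_{n}\cup B_{n}$ (a consequence of \eqref{eq:notlowalpha}, writing $(\gamma_{n}-\gamma_{0})\gamma_{n}^{-1/2}$), Cauchy--Schwarz and the interior bound $\|\nabla u_{0}\|_{L^{\infty}(K)}\le C\|g\|_{H^{1/2}(\partial\Omega)}$ (valid since $\gamma_{0}\in W^{2,d}_{\mathrm{loc}}\hookrightarrow C^{0,\alpha}_{\mathrm{loc}}$ forces $u_{0}\in C^{1,\alpha}_{\mathrm{loc}}$), is at most $C\|g\|_{H^{1/2}(\partial\Omega)}\,\|\dn\|_{L^{1}(\Omega)}^{1/2}\big(\int_{A_{n}\cup B_{n}}\gamma_{n}\nabla u_{n}\cdot\nabla u_{n}\big)^{1/2}$. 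Hence, once one has the key bound
\[
\int_{A_{n}\cup B_{n}}\dn\,\nabla u_{n}\cdot\nabla u_{n}\le C\,\|g\|_{H^{1/2}(\partial\Omega)}^{2}\,\|\dn\|_{L^{1}(\Omega)}
\]
(which, since $\dn\ge\gamma_{n}$ on $A_{n}$ and $\dn\ge2\gamma_{n}$ on $B_{n}$, controls $\int_{A_{n}\cup B_{n}}\gamma_{n}\nabla u_{n}\cdot\nabla u_{n}$, and since $\dn\ge2\gamma_{0}$ on $B_{n}$ lets the identity also yield $\|\nabla w\|_{L^{2}(\Omega)}^{2}=O(\|g\|^{2}\|\dn\|_{L^{1}(\Omega)})$), the whole estimate is done. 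I would prove the key bound separately on the two types of inclusion. On $B_{n}$ the coefficient $\gamma_{0}-\gamma_{n}$ stays bounded, so it suffices to control $\int_{B_{n}}\gamma_{0}|\nabla u_{n}|^{2}$; since $|B_{n}|=O(\|\dn\|_{L^{1}(\Omega)})$ (because $|\dn|_{F}\ge|\gamma_{0}|_{F}\ge\sqrt{d}\,\lambda_{0}$ on $B_{n}$) and $w$ solves $-\divx(\gamma_{n}\nabla w)=\divx\big((\gamma_{n}-\gamma_{0})\nabla u_{0}\big)$ with a source bounded on $B_{n}$ and small in $L^{1}(\Omega)$, a Caccioppoli argument with a cutoff around $B_{n}$ and the ellipticity of $\gamma_{0}$ closes this part without any further hypothesis on $B_{n}$. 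On $A_{n}$, the coefficient may blow up and $\nabla u_{n}$ may concentrate against it; this is where assumption~\ref{enu:extra} is used: one writes $\int_{A_{n}}|\dn|_{F}|\nabla u_{n}|^{2}\le\|\dn\|_{L^{p}(A_{n})}\,\|\nabla u_{n}\|_{L^{2p'}(A_{n})}^{2}$ and invokes a Meyers/Gehring higher-integrability estimate for the non-degenerate background operator $\divx(\gamma_{0}\nabla\,\cdot\,)$ on a region surrounding $A_{n}$, the thresholds $p>d$ in~\ref{enu:mixed}, $p>2$ (for $d=2$) in~\ref{enu:2d}, and $p>d/2$ together with the separation $d(A_{n},B_{n})>\|\dn\|_{L^{1}(A_{n})}^{\tau}$, $\tau<1/(d-1)$, in~\ref{enu:intertwined} being exactly what makes the two exponents compatible and, in the last case, what allows each conductive component to be enclosed in a ball free of insulating inclusions on which the estimate holds with a uniform constant. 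I expect this step --- and in particular the interplay between the degenerating constants and the scaling exponent $\tau$ --- to be the main obstacle.

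Granting these bounds, the rest follows the scheme of \citep{capdeboscq-vogelius-03a}. The field $\|\dn\|_{L^{1}(\Omega)}^{-1}(\gamma_{n}-\gamma_{0})\nabla u_{n}$ is then bounded in $L^{2}(\mathrm{d}\mu_{n})$ with $\mu_{n}:=\|\dn\|_{L^{1}(\Omega)}^{-1}|\dn|_{F}\mathcal{L}^{d}$, and a Cauchy--Schwarz argument between $\mu_{n}$ and the bounded-mass measure $\|\dn\|_{L^{1}(\Omega)}^{-1}(\gamma_{n}\nabla u_{n}\cdot\nabla u_{n})\mathcal{L}^{d}$ forces every weak-$*$ limit of $\|\dn\|_{L^{1}(\Omega)}^{-1}(\gamma_{n}-\gamma_{0})\nabla u_{n}\,\mathcal{L}^{d}$ to be absolutely continuous with respect to the weak-$*$ limit $\mu$ of $\mu_{n}$, with $L^{2}(\mathrm{d}\mu)$ density --- this is why $M\in L^{2}(\Omega,\mathrm{d}\mu)$ rather than merely bounded. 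Passing to a subsequence, $\mu_{n}\rightharpoonup\mu$ (a probability measure on $\overline{K}$), $\|\dn\|_{L^{1}(\Omega)}^{-1}(\gamma_{n}-\gamma_{0})\mathcal{L}^{d}\rightharpoonup D\,\mathrm{d}\mu$ with $|D|_{F}\le1$ $\mu$-a.e.\ (again by \eqref{eq:notlowalpha}), and the $L^{2}(\mathrm{d}\mu_{n})$-bounded corrector $\|\dn\|_{L^{1}(\Omega)}^{-1}(\gamma_{n}-\gamma_{0})\nabla w$ has a two-scale-type limit; linearity of $u_{n},u_{0}$ in the datum and the locality of the corrector (only the affine part of $u_{0}$ enters at $\mu$-a.e.\ point) let this limit be written as $-W\nabla u_{0}$ for a $\mu$-measurable tensor $W$, so that $M:=D-W\in L^{2}(\Omega,\mathbb{R}^{d\times d};\mathrm{d}\mu)$ and $\|\dn\|_{L^{1}(\Omega)}^{-1}(\gamma_{n}-\gamma_{0})\nabla u_{n}\,\mathcal{L}^{d}\rightharpoonup(M\nabla u_{0})\,\mathrm{d}\mu$. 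To pass to the limit in the integral identity uniformly over $y\in\overline{\Omega\setminus K}$ and over $\|g\|_{H^{1/2}(\partial\Omega)}\le1$, I would use that interior elliptic estimates for $G$ make $\{\nabla_{x}G(\cdot,y):y\in\overline{\Omega\setminus K}\}$ relatively compact in $C(K)$, so these functions are uniformly approximable by $\mu$-simple functions; combined with the convergence above this yields the stated formula with $\|r_{n}\|_{L^{\infty}(\Omega\setminus K)}=o(\|\dn\|_{L^{1}(\Omega)})$ uniformly, and the Neumann case is word for word the same with $N$ and $h$ in place of $G$ and $g$.

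Finally, the structure of $M$. Symmetry follows by polarising the bilinear form $(g,\tilde g)\mapsto\lim\|\dn\|_{L^{1}(\Omega)}^{-1}\int_{A_{n}\cup B_{n}}(\gamma_{n}-\gamma_{0})\nabla u_{n}\cdot\nabla\tilde u_{0}$ and integrating by parts to exchange $u_{0}$ and $\tilde u_{0}$, which is legitimate because both are $\gamma_{0}$-harmonic near $K$. The lower bound $0\le W\zeta\cdot\zeta$ comes from passing to the limit, localised around a point, in the nonnegative left-hand side of the energy identity above; the upper bound $W\zeta\cdot\zeta\le\zeta\cdot\zeta$ ($\mu$-a.e.) from the complementary (dual) energy inequality for the flux $\gamma_{n}\nabla u_{n}$, using $\dn>(\gamma_{n}-\gamma_{0})\gamma_{n}^{-1}(\gamma_{n}-\gamma_{0})$ on $A_{n}\cup B_{n}$. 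When $\gamma_{n}$ and $\gamma_{0}$ are isotropic the scalar computation $(\gamma_{n}-\gamma_{0})^{2}\gamma_{n}^{-1}\le\gamma_{n}+\gamma_{0}^{2}\gamma_{n}^{-1}=|\dn|_{F}/\sqrt{d}$ on the inclusions improves this to $W\zeta\cdot\zeta\le\frac{1}{\sqrt{d}}\zeta\cdot\zeta$; and the identification of $D$ as the measure limit of $\|\dn\|_{L^{1}(\Omega)}^{-1}(\gamma_{n}-\gamma_{0})$ is built into the construction.
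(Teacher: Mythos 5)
There is a genuine gap, and it sits exactly where the paper has to work hardest. First, your ``key bound'' $\int_{A_{n}\cup B_{n}}d_{n}\nabla u_{n}\cdot\nabla u_{n}\lesssim\|d_{n}\|_{L^{1}(\Omega)}$ is both unnecessary and unsubstantiated. Unnecessary, because to close your energy identity you only need $\int_{A_{n}\cup B_{n}}\gamma_{n}\nabla u_{n}\cdot\nabla u_{n}\lesssim\|d_{n}\|_{L^{1}(\Omega)}$, which follows (with no use of assumption~\ref{enu:extra}) from testing \eqref{eq:defwn0} with $w_{n}$: $E(w_{n})\le\int d_{n}^{\prime}\nabla u_{0}\cdot\nabla u_{0}\le\|d_{n}\|_{L^{1}(\Omega)}\|\nabla u_{0}\|_{L^{\infty}(K)}^{2}$ --- this is \propref{EnergyEstimateGal}, and it avoids the sign-indefinite term $\int(\gamma_{n}-\gamma_{0})\nabla u_{n}\cdot\nabla u_{n}$ that your identity leaves on the left. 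Unsubstantiated, because on $B_{n}$ your bound reads $\int_{B_{n}}\gamma_{n}^{-1}\left|\gamma_{0}\nabla u_{n}\right|^{2}\lesssim\|d_{n}\|_{L^{1}(\Omega)}$, i.e.\ essentially $\int_{B_{n}}\left|\nabla u_{n}\right|^{2}\lesssim\left|B_{n}\right|$ for arbitrary measurable $B_{n}$; the Caccioppoli argument you invoke controls $\gamma_{n}$-weighted energy, whose weight degenerates precisely on $B_{n}$, and the ``ellipticity of $\gamma_{0}$'' is of no help inside $B_{n}$ where the coefficient is $\gamma_{n}$. Likewise, on $A_{n}$ a Meyers/Gehring estimate for the background operator says nothing about $\nabla u_{n}$, which solves the $\gamma_{n}$-equation with unbounded contrast (Meyers exponents degenerate with the contrast), so the way you propose to use the exponents $p>d$, $p>2$, $p>d/2$ does not work.

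Second, and more importantly, you have misplaced the role of assumption~\ref{enu:extra} and thereby skipped the heart of the theorem. The first-order expansion in terms of a limit vector $\mathcal{M}\in L^{2}(\Omega,\mathrm{d}\mu)^{d}$ already holds under assumptions \ref{enu:well-within}--\ref{enu:notlow} (this is \propref{EnergyEstimateGal} plus the Riesz/Radon--Nikodym step, which you do reproduce correctly). What requires assumption~\ref{enu:extra} is the polarisability statement $\mathcal{M}=M\nabla u_{0}$ with $M$ independent of the boundary datum, which you dispatch with ``linearity of $u_{n},u_{0}$ in the datum and the locality of the corrector''. That locality is precisely what must be proved: in the paper it is \propref{Polar-Outline}, proved via the corrector identity of \propref{remainderestimate}, whose remainder involves $\|w_{n}\|_{L^{\infty}(A_{n})}$ and $L^{p}$ flux bounds on $A_{n}$, and these are controlled only through the three mechanisms matching \enuref{mixed} ($p>d$), \enuref{2d} (stream-function duality swapping $A_{n}$ and $B_{n}$ in dimension two), and \enuref{intertwined} (Stampacchia truncation with a cut-off separating $A_{n}$ from $B_{n}$, using $d(A_{n},B_{n})>\|d_{n}\|_{L^{1}(A_{n})}^{\tau}$), together with the improved Aubin--C\'ea--Nitsche bound \lemref{L2bound} and the boundary-condition independence \lemref{boundaryindep}. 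None of this machinery, nor a substitute for it, appears in your proposal, so the identification $M=D-W$ with a datum-independent $W$ is asserted rather than proved. The parts of your argument that do align with the paper --- the Green-function representation identity, the extraction of $\mu$ and $D$, the $L^{2}(\mathrm{d}\mu)$ density argument, and the isotropic computation $(\gamma_{n}-\gamma_{0})^{2}\gamma_{n}^{-1}\le\gamma_{n}+\gamma_{0}^{2}\gamma_{n}^{-1}=\left|d_{n}\right|_{F}/\sqrt{d}$ behind the bound $W\le d^{-1/2}I_{d}$ --- are fine, but they are the easier half.
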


Definition~\ref{def:cndn-1} specifies the matrix valued function
$W\in L^{2}\big(\Omega,\mathbb{R}^{d\times d};\text{d\ensuremath{\mu}\ensuremath{\big)}}$.
The tensor $M$ is, up to a factor, the polarisation tensor introduced
in \citep{capdeboscq-vogelius-03a}. Its properties are briefly discussed
in \secref{Properties}, following \citep{capdeboscq-vogelius-06}.

The question of large contrast limits has been considered by other
authors. In \citep{nguyen-vogelius-09}, the authors consider the
case of diametrically bounded inclusions. In \citep{MR3592165}, the
authors consider thin inhomogeneities. Unlike what is done in these
articles, we do not go beyond the perturbation regime. On the other
hand, in this work no geometric assumption is made on the shape of
the inhomogeneities.

To document the sharpness of \enuref{pertubative}, the following
example shows that it may happen that the asymptotic limit of $u_{n}$
is different from $u$ for some sequence $\left(\gamma_{n}\right)_{n\in\mathbb{N}}$
when $\left\Vert \dn\right\Vert _{L^{1}\left(\Omega\right)}\not\to0$
even though $\left|\qn\cup\rn\right|\to0.$
\begin{example}
\label{exa:countrex}Suppose that $\Omega=B(0,2)\subset\mathbb{R}^{d}$,
choose $\qn=B\left(0,1+\frac{1}{n}\right)\setminus B\left(0,1-\frac{1}{n}\right)$,
and $g=x_{1}$. Then for $\gamma_{0}=I_{d}$, the unperturbed solution
of \eqref{diri} corresponds to $u=x_{1}$.

Suppose that $\gamma_{n}$ is radial and constant on $\left(I_{i}\right)_{i\leq1\leq4}$,
where $I_{1}=\left(0,1-\frac{1}{n}\right)$, $I_{2}=\left(1-\frac{1}{n},1\right)$.
$I_{3}=\left(1,1+\frac{1}{n}\right)$, $I_{4}=\left(1+\frac{1}{n},2\right)$,
with values
\[
\gamma_{n}=\chi_{I_{1}\cup I_{4}}+n^{\alpha}\chi_{I_{2}}+n^{\beta}\chi_{I_{3}},
\]
where $\alpha,\beta$ are real parameters. Then, 
\[
\int_{\Omega}\left|\dn\right|_{F}\text{d}x=\sqrt{d}\left(n^{\alpha-1}+n^{-\alpha-1}+n^{\beta-1}+n^{-\beta-1}\right)
\]
 and the solution $u_{n}$ of \eqref{diri} takes the form 
\[
u_{n}=\sum_{i=1}^{4}A_{i}^{n}x_{1}\mathbf{1}_{I_{i}}\left(\left|x\right|\right)+\left|x\right|^{-d}\sum_{i=2}^{4}B_{i}^{n}x_{1}\mathbf{1}_{I_{i}}\left(\left|x\right|\right),
\]
for some constants $\left(A_{i}^{n}\right)_{1\leq i\leq4}$ and $\left(B_{i}^{n}\right)_{2\leq i\leq4}$.
As $n\to\infty$, then $u_{n}\to v$ pointwise where $v=\left(\lim_{n\to\infty}A_{1}^{n}\right)x_{1}$
for $x<1$ and $v=\left(\lim_{n\to\infty}A_{4}^{n}\right)x_{1}+\left(\lim_{n\to\infty}B_{4}^{n}\right)\left|x\right|^{-d}x_{1}$
for $x>\frac{1}{2}$. Computing the value of the constants, we find
that $\left(\lim_{n\to\infty}A_{1}^{n}\right)=\left(\lim_{n\to\infty}A_{4}^{n}\right)=1$
and $\left(\lim_{n\to\infty}B_{4}^{n}\right)=0$ if and only if $-1<\alpha<1$
and $-1<\beta<1$. We further note that if we write $\delta=\min\left(1+\alpha,1+\beta,1-\alpha,1-\beta\right)>0$,
$u_{n}-x_{1}$ is of order $n^{-\delta}$. Written in a slightly different
form, there exists a positive constant $C$ depending on $\alpha$,$\beta$
and $d$ but independent of $n$ such that for all $n\geq1$ there
holds

\[
C^{-1}\int_{\Omega}\left|\dn\right|_{F}\text{d}x\leq\left\Vert u_{n}-x\right\Vert _{L^{1}\left(\Omega\right)}\text{ and }\left\Vert u_{n}-x\right\Vert _{L^{\infty}\left(\Omega\right)}\leq C\int_{\Omega}\left|\dn\right|_{F}\text{d}x.
\]
In this family of examples, the assumption $\int_{\Omega}\left|\dn\right|_{F}\text{d}x\to0$
is necessary for the perturbation regime to exist.
\end{example}

Following the steps in \citep{capdeboscq-vogelius-03a}, the asymptotic
formula that we derive makes use of
\begin{enumerate}
\item A limiting Radon measure $\mu$ which describes the geometry of the
limiting set,
\item A background fundamental solution $G(x,y)$,
\item A limit vector $\mathcal{M}\in\left[L^{2}(\Omega,\text{d}\mu)\right]^{d}$
which describes the variations of the field $\nabla u_{n}$ in the
presence of inhomogeneity sets,
\item A polarisation tensor $M$, independent of $u_{n}$, $u_{0}$, the
larger domain $\Omega$ and the type of boundary condition, such that
$\mathcal{M}=M\nabla u_{0}$ in $L^{2}(\Omega,\text{d}\mu)$.
\end{enumerate}
This will be particularly familiar to readers acquainted to the subsequent
article \citep{capdeboscq-vogelius-06} where an energy-based approach
is also used. It turns out that under \enuref{well-within} and \enuref{pertubative}
only, we can express the first order expansion in terms of $\mathcal{M}$.

Given $u_{n},u_{0}\in H^{1}\left(\Omega\right)$ given by \eqref{diri}
or \eqref{Neum}, we define $w_{n}=u_{n}-u_{0}\in X$ where $X=H_{0}^{1}(\Omega)$
for the Dirichlet problem and $X=\left\{ \phi\in H^{1}\left(\Omega\right):\int_{\Omega}\phi\text{\,d}x=0\right\} $
for the Neumann problem. Here, $w_{n}$ is the weak solution of 
\begin{equation}
\int_{\Omega}\gamma_{n}\nabla w_{n}\cdot\nabla\phi\text{d}x=\int_{\Omega}\left(\gamma_{0}-\gamma_{n}\right)\nabla u_{0}\cdot\nabla\phi\text{\text{d}x for all }\phi\in X.\label{eq:defwn0}
\end{equation}
Note that if $u_{0}$ is the background solution of \eqref{diri}
or \eqref{Neum}, then by classical regularity results \citep[theorem 2.1]{zbMATH00961093},
$u_{0}\in H^{1}\left(\Omega\right)\cap C^{1}(K)$ and $\left\Vert u_{0}\right\Vert _{C^{1}\left(K\right)}\leq C\left(\Omega\right)\left\Vert g\right\Vert _{H^{\frac{1}{2}}\left(\partial\Omega\right)}$,
or $\left\Vert u_{0}\right\Vert _{C^{1}\left(K\right)}\leq C\left(\Omega\right)\left\Vert h\right\Vert _{H^{-\frac{1}{2}}\left(\partial\Omega\right)}$
respectively.
\begin{lem}
\label{lem:defmu} Let $\dn\in L^{\infty}\left(\Omega;\mathbb{R}^{d\times d}\right)$
be given by \defref{defdn}. Then, the sequence $\frac{\left|\dn\right|_{F}}{\left\Vert \dn\right\Vert _{L^{1}(\Omega)}}$
converges up to the possible extraction of a subsequence, in the sense
of measures to a positive radon measure $\mu$, that is,
\begin{equation}
\int_{\Omega}\frac{1}{\left\Vert \dn\right\Vert _{L^{1}(\Omega)}}\left|\dn\right|_{F}\phi\,\text{d}x\to\int_{\Omega}\phi\,d\mu\text{ for all }\phi\in C(\overline{\Omega}).\label{eq:defnmu}
\end{equation}
For each $i,j\in\{1,\ldots,d\}^{2}$, $\frac{1}{\left\Vert \dn\right\Vert _{L^{1}(\Omega)}}\left(\gamma_{n}-\gamma_{0}\right)_{ij}$
converges in the sense of measures to a limit $D_{ij}\in\left[L^{2}(\Omega,\text{d}\mu)\right]$
\begin{equation}
\int_{\Omega}\frac{1}{\left\Vert \dn\right\Vert _{L^{1}(\Omega)}}\left(\gamma_{n}-\gamma_{0}\right)_{ij}\phi\,\text{d}x\to\int_{\Omega}D_{ij}\,\phi\,d\mu\text{ for all }\phi\in C(\overline{\Omega}).\label{eq:measuretensor-1}
\end{equation}
\end{lem}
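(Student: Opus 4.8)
The plan is to obtain both convergences from the weak-$*$ sequential compactness of norm-bounded sequences of Radon measures on the compact set $\overline{\Omega}$, supplemented by the Radon--Nikodym theorem for the second assertion. First I would set $\nu_{n}=\frac{1}{\left\Vert \dn\right\Vert _{L^{1}(\Omega)}}\left|\dn\right|_{F}\,\mathrm{d}x$ (we may assume $\left\Vert \dn\right\Vert _{L^{1}(\Omega)}>0$, since otherwise $\gamma_{n}=\gamma_{0}$ a.e.\ at that index and there is nothing to prove). Each $\nu_{n}$ is a nonnegative Radon measure of total mass $1$, and by Assumption~\ref{enu:well-within} it is supported in $\overline{K}$, a compact subset of $\Omega$. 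Since $C(\overline{\Omega})$ is separable, the Banach--Alaoglu theorem furnishes a subsequence, still denoted $\nu_{n}$, converging weakly-$*$ to a nonnegative Radon measure $\mu$, which is \eqref{defnmu}; testing against the constant function and against a $\phi\in C(\overline{\Omega})$ with $\supp\phi\subset\Omega$, $0\leq\phi\leq1$ and $\phi\equiv1$ on $\overline{K}$ shows $\mu(\Omega)=1$ and $\supp\mu\subset\overline{K}\Subset\Omega$, so that $\mu$ is a finite Radon measure on $\Omega$.

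For the second statement, the key observation is the elementary pointwise bound
\[
\left|(\gamma_{n}-\gamma_{0})_{ij}\right|\leq\left|\gamma_{n}-\gamma_{0}\right|_{F}\leq\left|\dn\right|_{F}\qquad\text{a.e.\ in }\Omega,
\]
which holds on $\qn\cup\rn$ by \eqref{notlowalpha} and trivially on the complement, where $\gamma_{n}=\gamma_{0}$ by Assumption~\ref{enu:notlow}. Hence the signed Radon measures $\lambda_{n}^{ij}:=\frac{1}{\left\Vert \dn\right\Vert _{L^{1}(\Omega)}}(\gamma_{n}-\gamma_{0})_{ij}\,\mathrm{d}x$ satisfy $\bigl|\lambda_{n}^{ij}\bigr|\leq\nu_{n}$, so that their total variations do not exceed $1$. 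Refining the subsequence of the first paragraph finitely many times, once for each of the $d^{2}$ pairs $(i,j)$, I would extract a single subsequence along which $\lambda_{n}^{ij}$ converges weakly-$*$ to a signed Radon measure $\lambda^{ij}$ for every $(i,j)$.

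It then remains to identify $\lambda^{ij}$ with a density with respect to $\mu$. For any $\phi\in C(\overline{\Omega})$,
\[
\left|\int_{\Omega}\phi\,\mathrm{d}\lambda^{ij}\right|=\lim_{n\to\infty}\left|\int_{\Omega}\phi\,\mathrm{d}\lambda_{n}^{ij}\right|\leq\lim_{n\to\infty}\int_{\Omega}\left|\phi\right|\,\mathrm{d}\nu_{n}=\int_{\Omega}\left|\phi\right|\,\mathrm{d}\mu,
\]
so $\bigl|\lambda^{ij}\bigr|\leq\mu$ as measures, and in particular $\lambda^{ij}\ll\mu$. The Radon--Nikodym theorem then produces $D_{ij}\in L^{1}(\Omega,\mathrm{d}\mu)$ with $\lambda^{ij}=D_{ij}\,\mathrm{d}\mu$ and $\left|D_{ij}\right|\leq1$ $\mu$-almost everywhere; since $\mu$ is finite this yields $D_{ij}\in L^{\infty}(\Omega,\mathrm{d}\mu)\subset L^{2}(\Omega,\mathrm{d}\mu)$, and the weak-$*$ convergence $\lambda_{n}^{ij}\rightharpoonup D_{ij}\,\mathrm{d}\mu$ is precisely \eqref{measuretensor-1}.

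I do not expect a genuine obstacle: the argument is a routine compactness-plus-Radon--Nikodym scheme. The two points calling for attention are that no mass of $\nu_{n}$ escapes to $\partial\Omega$ in the limit, which is exactly what the separation in Assumption~\ref{enu:well-within} secures, and the domination $\bigl|\lambda^{ij}\bigr|\leq\mu$ enabling the use of Radon--Nikodym --- which is where the estimate $\left|\gamma_{n}-\gamma_{0}\right|_{F}\leq\left|\dn\right|_{F}$ of \eqref{notlowalpha} enters.
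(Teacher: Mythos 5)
Your argument is correct, and its first half (normalised masses equal to one, Banach--Alaoglu, weak-$*$ limit $\mu$) coincides with the paper's. Where you diverge is the identification of the limit of $\frac{1}{\left\Vert \dn\right\Vert _{L^{1}(\Omega)}}\left(\gamma_{n}-\gamma_{0}\right)_{ij}\,\mathrm{d}x$: you use the domination $\bigl|\lambda_{n}^{ij}\bigr|\leq\nu_{n}$, pass it to the limit to get $\bigl|\lambda^{ij}\bigr|\leq\mu$, and invoke the Radon--Nikodym theorem, obtaining the pointwise bound $\left|D_{ij}\right|\leq1$ $\mu$-a.e., hence $D_{ij}\in L^{\infty}(\Omega,\mathrm{d}\mu)\subset L^{2}(\Omega,\mathrm{d}\mu)$. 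The paper instead applies Cauchy--Schwarz with respect to the probability measures $\nu_{n}$ to bound $\bigl|\int\phi\,\mathrm{d}\mathcal{D}_{ij}\bigr|$ by $\bigl(\int\phi^{2}\,\mathrm{d}\mu\bigr)^{1/2}$ and then identifies the limit functional via the Riesz representation theorem in the Hilbert space $L^{2}(\Omega,\mathrm{d}\mu)$. Both routes rest on the same estimate $\left|\left(\gamma_{n}-\gamma_{0}\right)_{ij}\right|\leq\left|\dn\right|_{F}$ from \eqref{notlowalpha}; yours buys a slightly stronger conclusion (an $L^{\infty}$ density rather than merely $L^{2}$), while the paper's Cauchy--Schwarz/$L^{2}$-Riesz scheme is the one that generalises to the companion statement in \propref{EnergyEstimateGal}, where the quantity $\left(\gamma_{0}-\gamma_{n}\right)\nabla w_{n}$ admits only an energy (hence genuinely $L^{2}$-type) control and no pointwise domination by $\left|\dn\right|_{F}$, so Radon--Nikodym alone would not give the needed square-integrable density there. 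Your side remarks (mass one, no escape of mass to $\partial\Omega$, support in $\overline{K}$) are correct but not needed for the statement itself.
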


\begin{proof}
See appendix~\appref{AppendixA}.
\end{proof}
\begin{rem}
The sequence $\left\Vert \dn\right\Vert _{L^{1}(\Omega)}^{-1}\left|\dn\right|_{F}$
only converges to a given measure after extraction of a subsequence
a priori. In the case of an isotropic, constant, conductivity in the
inclusions, $\left\Vert \dn\right\Vert _{L^{1}(\Omega)}^{-1}\left|\dn\right|_{F}=1_{\qn\cup\rn}\left|\qn\cup\rn\right|^{-1}$,
and this measure does not depend on the values taken by $\gamma_{n}$
or $\gamma_{0}$ on $\qn\cup\rn$.

The quantity $\dn$ appears in the following energy estimate.
\end{rem}

\begin{prop}
\label{prop:EnergyEstimateGal} The weak solution of \eqref{defwn0}
$w_{n}\in X$ satisfies 
\begin{equation}
E\left(w_{n}\right):=\int_{\Omega}\gamma_{n}\nabla w_{n}\cdot\nabla w_{n}\text{d}x\leq\left\Vert \dn\right\Vert _{L^{1}(\Omega)}\left\Vert \nabla u_{0}\right\Vert _{L^{\infty}(K)}^{2}.\label{eq:energyest}
\end{equation}
As a consequence, there holds
\begin{equation}
\left\Vert \left(\gamma_{n}-\gamma_{0}\right)\nabla w_{n}\right\Vert _{L^{1}(\Omega)}\leq\left\Vert \dn\right\Vert _{L^{1}(\Omega)}\left\Vert \nabla u_{0}\right\Vert _{L^{\infty}(K)}.\label{eq:L1grad}
\end{equation}
 Furthermore, up to the possible extraction of a subsequence, $\frac{1}{\left\Vert \dn\right\Vert _{L^{1}(\Omega)}}\left(\gamma_{0}-\gamma_{n}\right)\nabla w_{n}$
converges in the sense of measures to a limit
\begin{equation}
\int_{\Omega}\frac{1}{\left\Vert \dn\right\Vert _{L^{1}(\Omega)}}\left(\gamma_{0}-\gamma_{n}\right)\nabla w_{n}\cdot\Psi\,\text{d}x\to\int_{\Omega}\mathcal{W}\cdot\Psi\,d\mu,\label{eq:measuretensor}
\end{equation}
where $\mathcal{W}\in\left[L^{2}(\Omega,\text{d}\mu)\right]^{d}$
and $\mu$ is given by \eqref{defnmu}.
\end{prop}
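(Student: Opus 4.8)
\emph{Strategy.} The plan is to obtain all three assertions by testing the variational identity \eqref{defwn0} against $\phi=w_n$ itself, and then exploiting the two pointwise matrix inequalities furnished by \eqref{notlowalpha} on $A_n\cup B_n$.

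\emph{Energy estimate.} Taking $\phi=w_n$ in \eqref{defwn0} and using $\gamma_n=\gamma_0$ off $A_n\cup B_n$ (\enuref{notlow}) gives
\[
E(w_n)=\int_{A_n\cup B_n}(\gamma_0-\gamma_n)\nabla u_0\cdot\nabla w_n\,\mathrm{d}x=-\int_{A_n\cup B_n}\gamma_n^{-1/2}(\gamma_n-\gamma_0)\nabla u_0\cdot\gamma_n^{1/2}\nabla w_n\,\mathrm{d}x .
\]
I would then note the pointwise identity $|\gamma_n^{-1/2}(\gamma_n-\gamma_0)\nabla u_0|_d^2=(\gamma_n-\gamma_0)\gamma_n^{-1}(\gamma_n-\gamma_0)\nabla u_0\cdot\nabla u_0\le|d_n|_F|\nabla u_0|_d^2$ a.e.\ on $A_n\cup B_n$, which follows from \eqref{notlowalpha} together with $M\xi\cdot\xi\le|M|_F|\xi|_d^2$ for symmetric positive semi-definite $M$, and apply the Cauchy--Schwarz inequality in $x$ to get $E(w_n)\le\big(\int_{A_n\cup B_n}|d_n|_F|\nabla u_0|_d^2\,\mathrm{d}x\big)^{1/2}E(w_n)^{1/2}$. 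Since $A_n\cup B_n\subset K$ (\enuref{well-within}), $d_n$ is supported on $A_n\cup B_n$, and $\int_\Omega|d_n|_F\,\mathrm{d}x=\|d_n\|_{L^1(\Omega)}$, the first factor is at most $\|\nabla u_0\|_{L^\infty(K)}\|d_n\|_{L^1(\Omega)}^{1/2}$; dividing by $E(w_n)^{1/2}$ (trivial if it vanishes) gives \eqref{energyest}. Here $\nabla u_0\in L^\infty(K)$ is the classical $C^1(K)$ regularity recalled before the statement.

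\emph{The $L^1$ bound and the limit measure.} For \eqref{L1grad} I would first prove the pointwise estimate $|(\gamma_n-\gamma_0)\nabla w_n|_d\le|d_n|_F^{1/2}(\gamma_n\nabla w_n\cdot\nabla w_n)^{1/2}$ a.e.\ on $A_n\cup B_n$: with $v=\gamma_n^{1/2}\nabla w_n$ and $B=(\gamma_n-\gamma_0)\gamma_n^{-1/2}$ one has $(\gamma_n-\gamma_0)\nabla w_n=Bv$, hence $|(\gamma_n-\gamma_0)\nabla w_n|_d\le\|B\|_{\mathrm{op}}|v|_d$, while $\|B\|_{\mathrm{op}}^2=\|BB^{\mathsf T}\|_{\mathrm{op}}=\|(\gamma_n-\gamma_0)\gamma_n^{-1}(\gamma_n-\gamma_0)\|_{\mathrm{op}}\le|d_n|_F$ by \eqref{notlowalpha}. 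Integrating and using Cauchy--Schwarz together with \eqref{energyest} yields $\|(\gamma_n-\gamma_0)\nabla w_n\|_{L^1(\Omega)}\le\|d_n\|_{L^1(\Omega)}^{1/2}E(w_n)^{1/2}\le\|d_n\|_{L^1(\Omega)}\|\nabla u_0\|_{L^\infty(K)}$. By \lemref{defmu} one may pass to a subsequence along which $\|d_n\|_{L^1(\Omega)}^{-1}|d_n|_F\,\mathrm{d}x\rightharpoonup\mu$; the $\mathbb R^d$-valued measures $\nu_n:=\|d_n\|_{L^1(\Omega)}^{-1}(\gamma_0-\gamma_n)\nabla w_n\,\mathrm{d}x$ then have total variation bounded by $\|\nabla u_0\|_{L^\infty(K)}$, so a further subsequence converges weak-$*$ to a vector measure $\nu$. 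Applying the same pointwise estimate, for every $\Psi\in C(\overline\Omega;\mathbb R^d)$,
\[
\Big|\int_\Omega\nu_n\cdot\Psi\Big|\le\Big(\int_\Omega\frac{|d_n|_F}{\|d_n\|_{L^1(\Omega)}}|\Psi|_d^2\,\mathrm{d}x\Big)^{1/2}\Big(\frac{E(w_n)}{\|d_n\|_{L^1(\Omega)}}\Big)^{1/2}\le\|\nabla u_0\|_{L^\infty(K)}\Big(\int_\Omega|\Psi|_d^2\,\mathrm{d}\mu+o(1)\Big)^{1/2},
\]
so in the limit $\big|\int_\Omega\Psi\cdot\mathrm{d}\nu\big|\le\|\nabla u_0\|_{L^\infty(K)}\|\Psi\|_{L^2(\Omega,\mathrm{d}\mu)}$. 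Since $C(\overline\Omega;\mathbb R^d)$ is dense in $[L^2(\Omega,\mathrm{d}\mu)]^d$, this functional extends boundedly to that Hilbert space, and the Riesz representation theorem produces $\mathcal W\in[L^2(\Omega,\mathrm{d}\mu)]^d$ with $\int_\Omega\Psi\cdot\mathrm{d}\nu=\int_\Omega\mathcal W\cdot\Psi\,\mathrm{d}\mu$ for all continuous $\Psi$, which is \eqref{measuretensor}.

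\emph{Expected obstacle.} Once the right pointwise inequalities are in place, everything reduces to two applications of Cauchy--Schwarz, so the only delicate point is getting the constant $1$ in $|(\gamma_n-\gamma_0)\nabla w_n|_d^2\le|d_n|_F\,\gamma_n\nabla w_n\cdot\nabla w_n$: one must observe that $\gamma_n^{-1/2}(\gamma_n-\gamma_0)^2\gamma_n^{-1/2}$ and $(\gamma_n-\gamma_0)\gamma_n^{-1}(\gamma_n-\gamma_0)$ share the same spectrum (the $B^{\mathsf T}B$ versus $BB^{\mathsf T}$ identity) so that \eqref{notlowalpha} may be invoked for the latter. The remaining subtlety, that the weak-$*$ limit $\nu$ has the form $\mathcal W\,\mathrm{d}\mu$ with $\mathcal W$ square-integrable against $\mu$, is handled by the density of continuous vector fields in $L^2(\Omega,\mathrm{d}\mu)$, avoiding a separate absolute-continuity argument.
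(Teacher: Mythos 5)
Your proposal is correct and follows essentially the same route as the paper's proof: the energy bound comes from the weak formulation tested with $w_{n}$ (the paper phrases this through the nonnegativity of the quadratic functional $J$, which is the same computation), the $L^{1}$ bound from the pointwise factorisation through $\gamma_{n}^{\pm1/2}$ together with \eqref{notlowalpha} and Cauchy--Schwarz, and \eqref{measuretensor} from weak-$*$ compactness of the vector measures plus the $L^{2}(\Omega,\mathrm{d}\mu)$ bound and Riesz representation. No gaps to report.
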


\begin{rem}
The upper estimates \eqref{energyest} and \eqref{L1grad} are sharp
with respect to the order of dependence on $\left\Vert \dn\right\Vert _{L^{1}(\Omega)}$
as shown in example~\exaref{countrex}.
\end{rem}

\begin{proof}
The proof of \propref{EnergyEstimateGal} is similar to the moderate
contrast case in \citep{capdeboscq-vogelius-03a}, but with estimates
in terms of $\left\Vert \dn\right\Vert _{L^{1}(\Omega)}$. It is provided
in appendix \appref{proofape}.
\end{proof}
Under \enuref{notlow} an improved Aubin--Céa--Nitsche estimate
can be derived (\lemref{L2bound}), which allows to consider extreme
contrast and depends on the $L^{1}$ norm of $\dn$ only. This allows
in particular to show independence with respect to the domain and
the prescribed boundary condition, as stated below (see also \citep[lemma 1]{capdeboscq-vogelius-06}).
\begin{lem}
\label{lem:boundaryindep} Suppose that assumptions \ref{enu:well-within},
\ref{enu:pertubative}, and \ref{enu:notlow} hold. Let $\tilde{\Omega}$
be any bounded regular open set such that $K\subset\tilde{\Omega}$
with $\text{dist}(K,\tilde{\Omega})>0$. Let $Y$ be one of the spaces
\[
H_{0}^{1}(\tilde{\Omega}),\quad\tilde{H}^{1}(\tilde{\Omega}):=\left\{ \phi\in H^{1}\big(\tilde{\Omega}\big):\int_{\tilde{\Omega}\setminus K}\phi\text{\,d}x=0\right\} 
\]
 or 
\[
H_{\#}^{1}(\tilde{\Omega}):=\left\{ \phi\in H_{\text{loc}}^{1}\left(\mathbb{R}^{d}\right)\,:\,\int_{\tilde{\Omega}\setminus K}\phi\text{\,d}x=0\text{ and }\phi\quad\tilde{\Omega}-\text{periodic}\right\} ,
\]
the latter if $\tilde{\Omega}$ is a cube. We write the weak solution
of (\ref{eq:defwn0}) $w_{n}^{X}\in X$ and we set $w_{n}^{Y}$ to
be the unique weak solution to 
\begin{equation}
\int_{Q}\gamma_{n}\nabla w_{n}^{Y}\cdot\nabla\phi\text{d}x=\int_{Q}\left(\gamma_{0}-\gamma_{n}\right)\nabla u_{0}\cdot\nabla\phi\text{d}x\text{ for all }\phi\in Y,\label{eq:varfom}
\end{equation}
then for any $\tau\in\left(0,\frac{1}{2\left(d-1\right)}\right),$
there exists $C>0$ which may depend on $\tau$, $\text{\ensuremath{\Omega}}$,
$K$, $\Lambda_{0}$, $\lambda_{0}$ and $\left\Vert \gamma_{0}\right\Vert _{W^{2,d}\left(\Omega\right)}$
only such that 
\[
\frac{1}{\|\dn\|_{L^{1}(\Omega)}}\|\big(\gamma_{n}-\gamma_{0}\big)\nabla\left(w_{n}^{Y}-w_{n}^{X}\right)\|_{L^{1}(\Omega)}\leq C\left\Vert \dn\right\Vert _{L^{1}(\Omega)}^{\text{\ensuremath{\tau}}}\left\Vert \nabla u_{0}\right\Vert _{L^{\infty}\left(\Omega\right)}.
\]
As a consequence, the measured valued vector $\mathcal{M}^{X}$ and
$\mathcal{M}^{Y}$ obtained from any two of these variational problems
via \propref{EnergyEstimateGal} are equal.
\end{lem}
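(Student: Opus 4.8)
The plan is to study $w:=w_{n}^{Y}-w_{n}^{X}$, to observe that it solves the \emph{homogeneous} equation in a neighbourhood of $K$ (unlike $w_{n}$ itself), and to bootstrap the energy estimate of \propref{EnergyEstimateGal} there; since the quantity to be estimated only involves $\nabla w$, we may normalise $w$ by an additive constant whenever convenient. Testing \eqref{defwn0} and \eqref{varfom} against an arbitrary $\phi\in C_{c}^{\infty}(\Omega\cap\tilde{\Omega})$ (which, up to an irrelevant additive constant, lies in $X\cap Y$) and subtracting — the two right-hand sides agree, since $u_{0}$, $\gamma_{0}$ and $\gamma_{n}$ are shared — gives $\divx(\gamma_{n}\nabla w)=0$ in $\Omega_{0}:=\Omega\cap\tilde{\Omega}$, an open set which by \enuref{well-within} and the hypothesis on $\tilde{\Omega}$ contains $K$ with $\text{dist}(\overline{K},\partial\Omega_{0})>0$; by \enuref{notlow}, $w$ is moreover $\gamma_{0}$-harmonic on $\Omega_{0}\setminus\overline{A_{n}\cup B_{n}}$. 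Applying \propref{EnergyEstimateGal} to $w_{n}^{X}$ on $\Omega$, and its verbatim analogue to $w_{n}^{Y}$ on $\tilde{\Omega}$ (the source is supported in $K$, so both energies are $\le\|\dn\|_{L^{1}(\Omega)}\|\nabla u_{0}\|_{L^{\infty}(K)}^{2}$), the triangle inequality in $L^{2}(\Omega_{0};\gamma_{n}\,\text{d}x)$ yields $\int_{\Omega_{0}}\gamma_{n}\nabla w\cdot\nabla w\,\text{d}x\le4\|\dn\|_{L^{1}(\Omega)}\|\nabla u_{0}\|_{L^{\infty}(K)}^{2}$.

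Next I would reduce the target $L^{1}$ bound to an energy bound carried by the inclusions. The fourth line of \eqref{notlowalpha} gives $\|(\gamma_{n}-\gamma_{0})\gamma_{n}^{-1/2}\|_{\mathrm{op}}^{2}=\|(\gamma_{n}-\gamma_{0})\gamma_{n}^{-1}(\gamma_{n}-\gamma_{0})\|_{\mathrm{op}}\le|\dn|_{F}$ a.e.\ on $A_{n}\cup B_{n}$, whence $|(\gamma_{n}-\gamma_{0})\nabla w|\le|\dn|_{F}^{1/2}(\gamma_{n}\nabla w\cdot\nabla w)^{1/2}$ there, and Cauchy--Schwarz together with $\int_{A_{n}\cup B_{n}}|\dn|_{F}=\|\dn\|_{L^{1}(\Omega)}$ gives
\[
\|(\gamma_{n}-\gamma_{0})\nabla w\|_{L^{1}(\Omega)}\le\|\dn\|_{L^{1}(\Omega)}^{1/2}\Big(\int_{A_{n}\cup B_{n}}\gamma_{n}\nabla w\cdot\nabla w\,\text{d}x\Big)^{1/2}.
\]
Merely inserting the energy estimate of the previous step only recovers $\|(\gamma_{n}-\gamma_{0})\nabla w\|_{L^{1}}\lesssim\|\dn\|_{L^{1}}$; the extra factor $\|\dn\|_{L^{1}}^{\tau}$ has to come from the fact that the energy of $w$ \emph{restricted to} $A_{n}\cup B_{n}$ is of strictly smaller order than its total energy, which is exactly what the homogeneity of the equation for $w$ should provide.

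To extract that, fix a smooth set $K'$ with $K\subset\subset K'\subset\subset\Omega_{0}$ and let $\tilde{w}$ be the $\gamma_{0}$-harmonic function on $K'$ with $\tilde{w}=w$ on $\partial K'$ (the trace is meaningful since $\partial K'\subset\Omega_{0}\setminus\overline{K}$, where $w\in H^{1}$). Then $v:=w-\tilde{w}\in H_{0}^{1}(K')$ solves $\divx(\gamma_{n}\nabla v)=-\divx((\gamma_{n}-\gamma_{0})\nabla\tilde{w})$ in $K'$, which has the same structure as \eqref{varfom} with $u_{0}$ replaced by $\tilde{w}$ and the domain by $K'$; hence \propref{EnergyEstimateGal} gives $\int_{K'}\gamma_{n}|\nabla v|^{2}\le\|\dn\|_{L^{1}}\|\nabla\tilde{w}\|_{L^{\infty}(K)}^{2}$, while $\int_{A_{n}\cup B_{n}}\gamma_{n}|\nabla\tilde{w}|^{2}\le\|\nabla\tilde{w}\|_{L^{\infty}(K)}^{2}\|\dn\|_{L^{1}}$ by the second line of \eqref{notlowalpha}. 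Writing $\nabla w=\nabla v+\nabla\tilde{w}$ yields $\int_{A_{n}\cup B_{n}}\gamma_{n}|\nabla w|^{2}\le4\|\dn\|_{L^{1}}\|\nabla\tilde{w}\|_{L^{\infty}(K)}^{2}$, so it remains to estimate $\|\nabla\tilde{w}\|_{L^{\infty}(K)}$. Since $\tilde{w}$ is $\gamma_{0}$-harmonic in $K'$ and $\gamma_{0}\in W_{\text{loc}}^{2,d}\hookrightarrow C^{0,\alpha}$, interior Schauder/De Giorgi estimates give $\|\nabla\tilde{w}\|_{L^{\infty}(K)}\lesssim\|\nabla\tilde{w}\|_{L^{2}(K')}$; and an energy-minimality argument with a competitor equal to $w$ near $\partial K'$ (using that $w$ is $\gamma_{0}$-harmonic there) gives $\|\nabla\tilde{w}\|_{L^{2}(K')}\lesssim\|\nabla w\|_{L^{2}(\Omega_{0}\setminus\overline{K})}\lesssim\|\dn\|_{L^{1}}^{1/2}\|\nabla u_{0}\|_{L^{\infty}(K)}$. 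Chaining the inequalities proves the estimate — but with constants depending on $\text{dist}(\overline{K},\partial\Omega_{0})$, i.e.\ on how tightly $\tilde{\Omega}$ surrounds $K$. Obtaining a constant depending only on $\Omega$, $K$, the ellipticity constants and $\|\gamma_{0}\|_{W^{2,d}(\Omega)}$ — equivalently, controlling the trace and extension of $w$ across a possibly thin collar around $K$ — is the main obstacle, and it is precisely here that one must replace the crude collar estimate above by the sharpened, duality-based Aubin--C\'ea--Nitsche bound of \lemref{L2bound}, whose validity for $\tau<\tfrac{1}{2(d-1)}$ accounts for the admissible range of $\tau$.

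Finally, for the last assertion, the estimate just proved says $\|\dn\|_{L^{1}(\Omega)}^{-1}(\gamma_{0}-\gamma_{n})\nabla(w_{n}^{Y}-w_{n}^{X})\to0$ in $L^{1}(\Omega)$; consequently, for any $\Psi\in C(\overline{\Omega};\mathbb{R}^{d})$ the left-hand side of \eqref{measuretensor} for the $X$-problem and for the $Y$-problem differ by at most $\|\Psi\|_{L^{\infty}}\,\|\dn\|_{L^{1}}^{-1}\|(\gamma_{0}-\gamma_{n})\nabla(w_{n}^{Y}-w_{n}^{X})\|_{L^{1}(\Omega)}\to0$. Passing to a common subsequence along which $\mu$ (which depends only on $(\dn)$, by \lemref{defmu}), $\mathcal{M}^{X}$ and $\mathcal{M}^{Y}$ all converge, one obtains $\int_{\Omega}\mathcal{M}^{X}\cdot\Psi\,\text{d}\mu=\int_{\Omega}\mathcal{M}^{Y}\cdot\Psi\,\text{d}\mu$ for every $\Psi$, hence $\mathcal{M}^{X}=\mathcal{M}^{Y}$ in $[L^{2}(\Omega,\text{d}\mu)]^{d}$.
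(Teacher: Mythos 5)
Your core argument is correct, and it takes a genuinely different route from the paper. The paper's proof combines the improved Aubin--C\'ea--Nitsche estimate of \lemref{L2bound}, applied to both $w_{n}^{X}$ and $w_{n}^{Y}$, with a Caccioppoli inequality for the difference (a cutoff $\chi\in C_{c}^{\infty}(\tilde{\Omega})$, $\chi=1$ on $K$), followed by the same Cauchy--Schwarz reduction you use; there the restriction $\tau<\frac{1}{2(d-1)}$ is inherited from the duality estimate, which is the paper's \emph{only} source of smallness beyond the bare energy bound. You instead extract the gain from the $\gamma_{0}$-harmonic replacement $\tilde{w}$ on $K'$: since $w=w_{n}^{Y}-w_{n}^{X}$ solves the homogeneous equation near $K$, energy minimality (with the Poincar\'e--Wirtinger constant subtraction in the collar, which you should write out; if $K'\setminus\overline{K}$ is disconnected, argue componentwise) gives $\|\nabla\tilde{w}\|_{L^{2}(K')}\lesssim\|\nabla w\|_{L^{2}(\Omega_{0}\setminus\overline{K})}\lesssim\|\dn\|_{L^{1}(\Omega)}^{1/2}\|\nabla u_{0}\|_{L^{\infty}(K)}$, interior gradient estimates (legitimate since $\gamma_{0}\in W^{2,d}$, the same regularity the paper invokes) upgrade this to $L^{\infty}(K)$, and the energy identity for $v=w-\tilde{w}$ then bounds the energy of $w$ on $A_{n}\cup B_{n}$ by $O(\|\dn\|_{L^{1}(\Omega)}^{2})$. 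Chaining gives $\|\dn\|_{L^{1}(\Omega)}^{-1}\|(\gamma_{n}-\gamma_{0})\nabla w\|_{L^{1}(\Omega)}\lesssim\|\dn\|_{L^{1}(\Omega)}^{1/2}$, which, since $\|\dn\|_{L^{1}(\Omega)}\leq1$ by assumption~\ref{enu:pertubative} and $\tau<\frac{1}{2(d-1)}\leq\frac{1}{2}$, is stronger than the claim in every dimension. Your final identification of $\mathcal{M}^{X}$ and $\mathcal{M}^{Y}$ is the same as the paper's.

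Two caveats. First, the ``main obstacle'' you flag is not a real one: the constant in the paper's own proof also depends on $\tilde{\Omega}$ (the Caccioppoli constant $C(\tilde{\Omega},K)$ involves $\|\nabla\chi\|_{L^{\infty}}\sim\mathrm{dist}(K,\partial\tilde{\Omega})^{-1}$, and the analogue of \lemref{L2bound} for $w_{n}^{Y}$ uses elliptic estimates on $\tilde{\Omega}$), so dependence on the collar width is implicitly tolerated by the statement and your construction is on exactly the same footing; no repair is needed. Second, your closing sentence misreads the role of \lemref{L2bound}: in the paper it is not a device for removing the collar dependence but the mechanism producing the factor $\|\dn\|_{L^{1}(\Omega)}^{\tau}$ (the Caccioppoli route alone gives no gain, since Poincar\'e plus the energy estimate only yield $\|w_{n}\|_{L^{2}}\lesssim\|\dn\|_{L^{1}}^{1/2}$), and that is precisely where the range $\tau<\frac{1}{2(d-1)}$ originates. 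Your harmonic replacement supplies that smallness by itself, with the better exponent $\frac{1}{2}$, so once you accept the (unavoidable, and shared) dependence on $\mathrm{dist}(K,\partial\tilde{\Omega})$, your proof is complete without any appeal to \lemref{L2bound}.
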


The proof of this result is provided in \secref{APE-gen}. It now
suffices to focus on Dirichlet problem to establish \thmref{main}.
To prove polarisability, that is, $\mathcal{M}=M\nabla u_{0}$, our
argument requires one of the additional requirements detailed in \enuref{extra}.
\begin{defn}
\label{def:cndn-1} For each $i=1,\ldots,d$, we define the correctors
$w_{n}^{i}\in H_{0}^{1}(\Omega)$ as the weak solutions of 
\begin{equation}
\int_{\Omega}\gamma_{n}\nabla w_{n}^{i}\cdot\nabla\phi\text{\,d}x=\int_{\Omega}\left(\gamma_{0}-\gamma_{n}\right)\mathbf{e}_{i}\cdot\nabla\phi\text{\,d}x\text{ for all }\phi\in H_{0}^{1}\left(\Omega\right).\label{eq:defwn0-1-1}
\end{equation}
We call $W_{ij}\in L^{2}\left(\Omega,d\mu\right)$ the scalar weak$^{*}$
limit of $\frac{1}{\|\dn\|_{L^{1}(\Omega)}}\left(\nabla w_{n}^{i}\cdot\left(\gamma_{0}-\gamma_{n}\right)\mathbf{e}_{j}\right).$
\end{defn}

\begin{rem*}
The connection between this tensor and its parent introduced in \citep{capdeboscq-vogelius-03a}
is discussed in \secref{Properties}.
\end{rem*}
\begin{prop}
\label{prop:Polar-Outline} Suppose assumptions \Enuref{well-within},
\Enuref{pertubative}, \Enuref{notlow} and \Enuref{extra} are satisfied.
Given $\Omega^{\prime}$ a smooth open subset of $\Omega$ containing
$K$ such that $d\left(\Omega^{\prime},\partial\Omega\right)>\frac{1}{3}d\left(K,\partial\Omega\right)$
and $d\left(K,\partial\Omega^{\prime}\right)>\frac{1}{3}d\left(K,\partial\Omega\right)$,
there holds 
\[
\int_{\Omega}\left(\gamma_{n}-\gamma_{0}\right)\nabla w_{n}\cdot\nabla x_{i\,}\phi\,\text{d}x=\int_{\Omega}\left(\gamma_{n}-\gamma_{0}\right)\nabla w_{n}^{i}\cdot\nabla u_{0\,}\phi\,\text{d}x+\int_{\Omega}r_{n}\cdot\nabla\phi\text{\,d}x
\]
with 
\[
\left\Vert r_{n}\right\Vert _{L^{1}\left(\Omega\right)}\leq C\left\Vert \dn\right\Vert _{L^{1}\left(\Omega\right)}^{1+\eta}\left(\left\Vert \nabla u_{0}\right\Vert _{L^{\infty}(K)}+\left\Vert u_{0}\right\Vert _{L^{\infty}\left(\partial\Omega^{\prime}\right)}\right),
\]
where the positive constants $C$ and $\eta$ may depend only on $\tau$,
$\text{\ensuremath{\Omega}}$, $K$, $\left\Vert \gamma_{0}\right\Vert _{W^{2,d}\left(\Omega\right)}$,
$\Lambda_{0}$, $\lambda_{0}$, and possibly $\left\Vert \dn\right\Vert _{L^{p}\left(\qn\right)}$
or $\left\Vert \dn\right\Vert _{L^{p}\left(\rn\right)}$ for some
$p$ depending on which of the alternatives listed in \enuref{extra}
is satisfied.
\end{prop}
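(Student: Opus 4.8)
The plan is to adapt the energy/duality argument of \citep{capdeboscq-vogelius-03a,capdeboscq-vogelius-06}; the whole difficulty is a quantitative control of the remainder $r_n$. We may restrict to the Dirichlet problem, $X=H^1_0(\Omega)$, which by \lemref{boundaryindep} entails no loss of generality.

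\emph{A localised duality identity.} Since $w_n,w_n^i\in H^1_0(\Omega)$ and $\phi$ is Lipschitz, the products $\phi w_n^i$ and $\phi w_n$ are admissible test functions in \eqref{defwn0} and \eqref{defwn0-1-1}. Expanding $\nabla(\phi w_n^i)=\phi\nabla w_n^i+w_n^i\nabla\phi$ and symmetrically for $\phi w_n$, and subtracting the two resulting identities, the cross term $\int_\Omega\phi\,\gamma_n\nabla w_n\cdot\nabla w_n^i\,\text{d}x$ cancels by symmetry of $\gamma_n$; using $\nabla x_i=\mathbf{e}_i$, the symmetry of $\gamma_n-\gamma_0$, and moving the two $\phi$-weighted terms $\int_\Omega\phi\,(\gamma_n-\gamma_0)\mathbf{e}_i\cdot\nabla w_n\,\text{d}x$ and $\int_\Omega\phi\,(\gamma_n-\gamma_0)\nabla u_0\cdot\nabla w_n^i\,\text{d}x$ to the left, one obtains the identity of the statement with the $\phi$-independent vector field
\[
r_n=w_n^i\,\gamma_n\nabla w_n-w_n\,\gamma_n\nabla w_n^i-w_n^i\,(\gamma_0-\gamma_n)\nabla u_0+w_n\,(\gamma_0-\gamma_n)\mathbf{e}_i .
\]

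\emph{Reducing $\|r_n\|_{L^1}$ to interior bounds on $w_n,w_n^i$.} Write $\gamma_n\nabla w_n=\gamma_0\nabla w_n+(\gamma_n-\gamma_0)\nabla w_n$, and likewise for $w_n^i$. The terms carrying $\gamma_n-\gamma_0$ are supported in $A_n\cup B_n\subset K$; by $|\gamma_n-\gamma_0|_F\le|\dn|_F$ (see \eqref{notlowalpha}), the $L^1$ bound \eqref{L1grad} on $(\gamma_n-\gamma_0)\nabla w_n$ together with its analogue for $w_n^i$ (\propref{EnergyEstimateGal} applied with $u_0$ replaced by $x_i$), and $\int_{A_n\cup B_n}|\dn|_F\,\text{d}x\le\|\dn\|_{L^1(\Omega)}$, they contribute at most $C\big(\|w_n\|_{L^\infty(K)}+\|w_n^i\|_{L^\infty(K)}\big)\big(1+\|\nabla u_0\|_{L^\infty(K)}\big)\|\dn\|_{L^1(\Omega)}$. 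The leftover smooth part $\gamma_0\big(w_n^i\nabla w_n-w_n\nabla w_n^i\big)$ has weak divergence $w_n^i\,\divx(\gamma_0\nabla w_n)-w_n\,\divx(\gamma_0\nabla w_n^i)$ (the symmetric cross terms cancel), and by \eqref{defwn0}, \eqref{defwn0-1-1} the fields $\gamma_0\nabla w_n$, $\gamma_0\nabla w_n^i$ are divergence-free up to the addition of vector fields supported in $A_n\cup B_n$ with $L^1$ norm $\le C\|\dn\|_{L^1(\Omega)}\big(1+\|\nabla u_0\|_{L^\infty(K)}\big)$; since $w_n,w_n^i$ vanish on $\partial\Omega$, one further integration by parts reduces this piece, modulo a divergence-free field irrelevant to the identity, to a bound of the same type. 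Hence
\[
\|r_n\|_{L^1(\Omega)}\ \le\ C\,\big(\|w_n\|_{L^\infty(K)}+\|w_n^i\|_{L^\infty(K)}\big)\big(1+\|\nabla u_0\|_{L^\infty(K)}\big)\,\|\dn\|_{L^1(\Omega)},
\]
(with $L^q(K)$ in place of $L^\infty(K)$ under the weaker alternative of \enuref{extra}), so it suffices to prove $\|w_n\|_{L^\infty(K)}+\|w_n^i\|_{L^\infty(K)}\le C\,\|\dn\|_{L^1(\Omega)}^{\eta}\big(\|\nabla u_0\|_{L^\infty(K)}+\|u_0\|_{L^\infty(\partial\Omega^{\prime})}\big)$ for some $\eta>0$.

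\emph{The interior bounds, and the main obstacle.} This last inequality is the heart of the matter, and I expect it to be the principal obstacle. I would combine: the energy estimate \propref{EnergyEstimateGal}, which controls $\nabla w_n,\nabla w_n^i$ in $L^2$ away from $B_n$; the improved Aubin--C\'ea--Nitsche estimate \lemref{L2bound}, which gives genuine smallness of $\|w_n\|_{L^2(\Omega)}$, $\|w_n^i\|_{L^2(\Omega)}$ beyond the crude $\|\dn\|_{L^1(\Omega)}^{1/2}$; and interior elliptic regularity on the fixed annulus $\Omega^{\prime}\setminus K$, and on $\Omega^{\prime}$ away from the conductive set $A_n$ where the ellipticity ratio of $\gamma_n$ is under control, to upgrade the $L^2$ bounds to $L^\infty(K)$ control --- this is where $\|u_0\|_{L^\infty(\partial\Omega^{\prime})}$ enters the constants. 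The genuinely hard point is the conductive set $A_n$: there the ellipticity ratio of $\gamma_n$ blows up, so De Giorgi--Nash--Moser is unavailable, and one instead uses \enuref{extra} (boundedness of $\|\dn\|_{L^p(A_n)}$) together with $|A_n\cup B_n|\le\lambda_0^{-1}\|\dn\|_{L^1(\Omega)}\to0$ (a consequence of \eqref{notlowalpha}) and H\"older's inequality, which simultaneously furnishes the missing integrability and the extra power $\|\dn\|_{L^1(\Omega)}^{\eta}$; when only $p>\frac{d}{2}$ is available, the separation $d(A_n,B_n)>\|\dn\|_{L^1(A_n)}^{\tau}$ of alternative \enuref{intertwined} is used, via cut-off functions, to decouple the estimates on the conductive and the insulating inclusions. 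Assembling these bounds yields $\|r_n\|_{L^1(\Omega)}\le C\,\|\dn\|_{L^1(\Omega)}^{1+\eta}\big(\|\nabla u_0\|_{L^\infty(K)}+\|u_0\|_{L^\infty(\partial\Omega^{\prime})}\big)$ with $C,\eta$ depending only on the stated data and on which alternative of \enuref{extra} holds. Extracting a rate strictly better than $\|\dn\|_{L^1(\Omega)}$ from $r_n$ in the simultaneous presence of very conductive and very insulating inclusions is precisely what \enuref{extra} and \lemref{L2bound} are designed to overcome, and is the new ingredient compared with the moderate-contrast argument of \citep{capdeboscq-vogelius-03a}.
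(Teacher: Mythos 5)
Your derivation of the identity is correct and coincides with the paper's: testing \eqref{defwn0} with $\phi w_{n}^{i}$ and \eqref{defwn0-1-1} with $\phi w_{n}$ and subtracting yields exactly the remainder $r_{n}=\left(\gamma_{n}-\gamma_{0}\right)\left(w_{n}^{i}\nabla u_{0}-w_{n}\nabla x_{i}\right)+w_{n}^{i}\gamma_{n}\nabla w_{n}-w_{n}\gamma_{n}\nabla w_{n}^{i}$ used in \propref{remainderestimate}. The gaps are in the estimation of $r_{n}$. First, your treatment of the piece $\gamma_{0}\left(w_{n}^{i}\nabla w_{n}-w_{n}\nabla w_{n}^{i}\right)$ is circular: integrating by parts (using $w_{n}=w_{n}^{i}=0$ on $\partial\Omega$) and replacing $\divx\left(\gamma_{0}\nabla w_{n}\right)$, $\divx\left(\gamma_{0}\nabla w_{n}^{i}\right)$ by the divergences of $\left(\gamma_{0}-\gamma_{n}\right)\left(\nabla w_{n}+\nabla u_{0}\right)$ and $\left(\gamma_{0}-\gamma_{n}\right)\left(\nabla w_{n}^{i}+\mathbf{e}_{i}\right)$, the gradient--gradient cross terms cancel by symmetry, and what remains besides harmless $\nabla\phi$-terms is precisely $\int_{\Omega}\phi\left(\gamma_{n}-\gamma_{0}\right)\nabla w_{n}\cdot\mathbf{e}_{i}\,\text{d}x-\int_{\Omega}\phi\left(\gamma_{n}-\gamma_{0}\right)\nabla w_{n}^{i}\cdot\nabla u_{0}\,\text{d}x$, i.e.\ the very quantity $\int_{\Omega}r_{n}\cdot\nabla\phi\,\text{d}x$ you are trying to bound; no estimate comes out of that step. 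Nor is a direct $L^{1}$ bound of this piece available, since on the insulating set $\rn$ the gradient $\nabla w_{n}$ is not controlled by the energy ($\gamma_{n}$ may degenerate there). This is why the paper never splits the flux as $\gamma_{0}\nabla w_{n}+\left(\gamma_{n}-\gamma_{0}\right)\nabla w_{n}$, but keeps $\gamma_{n}\nabla w_{n}$ intact and splits instead according to $\left\{ \gamma_{n}\leq\gamma_{0}\right\} $ versus $\qn$ (the terms $T_{1}$, $T_{2}$ in \propref{remainderestimate}): on the insulating part, Cauchy--Schwarz with the energy and the improved $L^{2}$ bound of \lemref{L2bound} gives the exponent $1+\eta$, while on $\qn$ either the flux bound \eqref{lpboundflux} combined with \eqref{Lsbound} (when $p>d$) or the sup bound of \lemref{Linfty2pD} (under \enuref{intertwined}) is used.

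Second, your reduction requires $\left\Vert w_{n}\right\Vert _{L^{\infty}\left(K\right)}+\left\Vert w_{n}^{i}\right\Vert _{L^{\infty}\left(K\right)}\leq C\left\Vert \dn\right\Vert _{L^{1}\left(\Omega\right)}^{\eta}\left(\left\Vert \nabla u_{0}\right\Vert _{L^{\infty}(K)}+\left\Vert u_{0}\right\Vert _{L^{\infty}\left(\partial\Omega^{\prime}\right)}\right)$, which is neither proved nor what the paper establishes: in general only the $O(1)$ bound \eqref{linftybasic} is available, and sup-norm smallness is obtained only on $\qn$, only under the separation hypothesis \enuref{intertwined}, through the cut-offs $\chi_{n}$ of \lemref{Linfty2pD} (the truncation argument is run on $\chi_{n}w_{n}$ precisely because it breaks down across the insulating set $\rn$). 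Under \enuref{mixed} the paper deliberately avoids any such smallness, working with $L^{q}$ norms of $w_{n}$ and H\"older with $\left\Vert \dn\right\Vert _{L^{p}\left(\qn\right)}$, and under \enuref{2d} (dimension two, integrability only on $\rn$, none on $\qn$) your scheme has nothing with which to control the conductive set at all; the paper handles that alternative by a genuinely different device, the stream-function duality of \propref{polarstream}, which exchanges the roles of $\qn$ and $\rn$, and which your proposal does not address. So the algebraic identity is right, but the core quantitative part --- exactly the step you flag as the principal obstacle --- is missing, and the two reductions you propose in its place do not work as described.
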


\begin{proof}
The proof of \propref{Polar-Outline} is the purpose of \secref{ProofPolar}.
Depending on whether both insulating and conducting inhomogeneities
are present, and whether the dimension is $2$ or more, it is the
combined conclusion of \propref{insulatingpolar}, \propref{polarstream}
and \propref{notwinning}.
\end{proof}
We are now in position to conclude the proof of \thmref{main}, but
for the properties of the polarisation tensor $M$, left for \lemref{-ReussBounds}.
\begin{proof}[End of the proof of \thmref{main}]
 Consider the Dirichlet case. Observing that the weak formulation
for the solution $w_{n}=u_{n}-u_{0}$ reads 
\begin{equation}
\int_{\Omega}\gamma_{0}\nabla w_{n}\cdot\nabla\phi\text{d}x=\int_{\Omega}\left(\gamma_{0}-\gamma_{n}\right)\left(\nabla w_{n}+\nabla u_{0}\right)\cdot\nabla\phi\,\text{d}x\label{eq:11}
\end{equation}
for any $\phi\in H_{0}^{1}(\Omega),$ we choose a sequence $\phi_{m}\in C_{c}^{1}(\Omega)$
such that $\phi_{m}\to G_{y}$ in $W^{1,1}\big(\Omega\big)$ and $\phi_{m}\to\nabla G_{y}$
in $C^{0}\big(K)$. Using the fact that $w_{n}$ is smooth away from
the set $K$ and the fact that $\gamma_{n}-\gamma_{0}$ is supported
in $K$, we may insert $\phi_{m}$ into (\ref{eq:11}) and pass to
the limit to conclude that
\[
\int_{\Omega}\gamma_{0}\nabla w_{n}\cdot\nabla_{x}G(x,y)\text{\,d}x=\int_{\Omega}\left(\gamma_{0}-\gamma_{n}\right)\left(\nabla u_{0}+\nabla w_{n}\right)\cdot\nabla_{x}G\left(x,y\right)\text{\,d}x.
\]
After an integration by parts we obtain 
\begin{eqnarray*}
\left(u_{n}-u_{0}\right)\left(y\right) & = & \int_{\Omega}\left(\gamma_{n}-\gamma_{0}\right)\left(\nabla w_{n}+\nabla u_{0}\right)\cdot\nabla_{x}G(x,y)\text{\,d}x\\
 & = & \|\dn\|_{L^{1}(\Omega)}\int_{\Omega}\frac{1}{\|\dn\|_{L^{1}(\Omega)}}\left(\gamma_{n}-\gamma_{0}\right)\nabla u_{0}\cdot\nabla_{x}G\left(x,y\right)\text{\,d}x\\
 & - & \|\dn\|_{L^{1}(\Omega)}\int_{\Omega}\frac{1}{\|\dn\|_{L^{1}(\Omega)}}\left(\gamma_{0}-\gamma_{n}\right)\nabla w_{n}\cdot\nabla_{x}G\left(x,y\right)\text{\,d}x
\end{eqnarray*}
Using the fact that $\forall y\in\overline{\Omega\setminus K}\quad\text{and}\quad\forall x\in\cup_{n=1}^{\infty}\left(\qn\cup\rn\right)$,
we may find a smooth function $\phi_{y}\in C^{0}\big(\overline{\Omega}\big)$
such that
\[
\phi_{y}(x)=\nabla_{x}G\left(x,y\right)\quad\text{\ensuremath{\forall x\in K},}
\]
and thanks to \propref{Polar-Outline}, and \lemref{defmu}, we have
\[
\left(u_{n}-u_{0}\right)\left(y\right)=\|\dn\|_{L^{1}(\Omega)}\int_{\Omega}\left(D_{ij}-W_{ij}\right)\frac{\partial u_{0}}{\partial x_{i}}\frac{\partial G\left(x,y\right)}{\partial x_{j}}\text{d}\mu(x)+r_{n}(y),
\]
where $W\in L^{2}\big(\Omega,\mathbb{R}^{d\times d};\text{d}\mu\big)$
is introduced in \defref{cndn-1}. Note that $\phi_{y}$ is uniformly
bounded $\forall\left(x,y\right)\in K\times\overline{\Omega\setminus K}$
. Moreover, the remainder estimate from \propref{Polar-Outline} only
depends on $\|g\|_{H^{\frac{1}{2}}\left(\partial\Omega\right)},$
therefore $\|r_{n}\|_{L^{\infty}(\Omega)}/\|\dn\|_{L^{1}(\Omega)}$
converges to $0$ uniformly in $y\in\overline{\Omega\setminus K}$
and $g$ in the unit ball of the space $H^{\frac{1}{2}}\left(\partial\Omega\right)$.
The Neumann case is similar.
\end{proof}
The rest of paper is structured as follows. In \secref{APE-gen} we
derive a number of a priori estimates, and prove \lemref{boundaryindep}.
\Secref{ProofPolar} is devoted to the proof of \propref{Polar-Outline}.
In \secref{Properties} we briefly discuss some of the properties
of the tensor $M$, and prove \lemref{-ReussBounds}. Finally in \secref{An-example}
we show with an example that the a priori bounds for $M$ given in
\thmref{main} are attained.

\section{\label{sec:APE-gen}Proof of \lemref{boundaryindep} and a priori
estimates.}
\begin{lem}
\label{lem:Linftysimple} Given $\Omega^{\prime}$ a smooth domain
as defined in \propref{Polar-Outline}, there holds 
\begin{align*}
\left\Vert u_{n}\right\Vert _{L^{\infty}\left(\partial\Omega^{\prime}\right)}+\left\Vert \nabla u_{n}\right\Vert _{L^{\infty}\left(\partial\Omega^{\prime}\right)} & \leq C\left(\left\Vert \nabla u_{0}\right\Vert _{L^{\infty}(K)}+\left\Vert u_{0}\right\Vert _{L^{\infty}\left(\partial\Omega^{\prime}\right)}\right),\\
\left\Vert w_{n}\right\Vert _{L^{\infty}\left(\partial\Omega^{\prime}\right)}+\left\Vert \nabla w_{n}\right\Vert _{L^{\infty}\left(\partial\Omega^{\prime}\right)} & \leq C\left\Vert w_{n}\right\Vert _{L^{2}\left(\Omega\setminus K\right)}
\end{align*}
where $C>0$ depends on $\Omega^{\prime},$ $K,\Omega$, $\Lambda_{0}$,
$\lambda_{0}$ and $\left\Vert \gamma_{0}\right\Vert _{W^{2,d}\left(\Omega\right)}$
only. Furthermore, 
\begin{equation}
\left\Vert w_{n}\right\Vert _{L^{\infty}\left(K\right)}\leq C\left(\left\Vert \nabla u_{0}\right\Vert _{L^{\infty}(K)}+\left\Vert u_{0}\right\Vert _{L^{\infty}\left(\partial\Omega^{\prime}\right)}\right).\label{eq:linftybasic}
\end{equation}
\end{lem}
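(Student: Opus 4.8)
The plan is to exploit the structural fact, from \enuref{notlow}, that $\gamma_{n}=\gamma_{0}$ on $\Omega\setminus K$, so that both $u_{n}$ and $w_{n}=u_{n}-u_{0}$ are weak solutions of the \emph{$n$-independent} equation $\divx(\gamma_{0}\nabla v)=0$ on the open set $\Omega\setminus\overline{K}$, and by the choice of $\Omega'$ the hypersurface $\partial\Omega'$ lies at distance at least $\frac{1}{3}d(K,\partial\Omega)$ from both $\partial\Omega$ and $\partial K$, hence compactly inside $\Omega\setminus\overline{K}$. Since $\gamma_{0}\in W_{\mathrm{loc}}^{2,d}\hookrightarrow C_{\mathrm{loc}}^{0,\alpha}$ for every $\alpha<1$, interior elliptic regularity (De Giorgi--Nash--Moser to pass from $L^{2}$ to $L^{\infty}$, followed by interior Schauder estimates for the gradient) furnishes a constant $C$ depending only on $\Omega'$, $K$, $\Omega$, $\lambda_{0}$, $\Lambda_{0}$ and $\|\gamma_{0}\|_{W^{2,d}(\Omega)}$ --- and, crucially, \emph{not} on $n$, because the operator is frozen outside $K$ --- such that
\[
\|v\|_{L^{\infty}(\partial\Omega')}+\|\nabla v\|_{L^{\infty}(\partial\Omega')}\leq C\|v\|_{L^{2}(\Omega\setminus K)}
\]
for every weak solution $v$ of $\divx(\gamma_{0}\nabla v)=0$ in $\Omega\setminus\overline{K}$. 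Taking $v=w_{n}$ establishes the second displayed inequality at once.

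For the first inequality I would split $u_{n}=w_{n}+u_{0}$. The contribution of $w_{n}$ is controlled by the previous display as soon as $\|w_{n}\|_{L^{2}(\Omega\setminus K)}$ is bounded, which is where \propref{EnergyEstimateGal} enters: since $\gamma_{n}=\gamma_{0}\geq\lambda_{0}I$ on $\Omega\setminus K$, estimate \eqref{energyest} gives
\[
\lambda_{0}\,\|\nabla w_{n}\|_{L^{2}(\Omega\setminus K)}^{2}\leq\int_{\Omega\setminus K}\gamma_{n}\nabla w_{n}\cdot\nabla w_{n}\,\mathrm{d}x\leq E(w_{n})\leq\|\dn\|_{L^{1}(\Omega)}\|\nabla u_{0}\|_{L^{\infty}(K)}^{2}\leq\|\nabla u_{0}\|_{L^{\infty}(K)}^{2},
\]
using $\|\dn\|_{L^{1}(\Omega)}\leq1$ from \enuref{pertubative}; as $w_{n}\in H_{0}^{1}(\Omega)$ has vanishing trace on the part $\partial\Omega$ of $\partial(\Omega\setminus K)$, the Poincar\'e--Friedrichs inequality on $\Omega\setminus K$ upgrades this to $\|w_{n}\|_{L^{2}(\Omega\setminus K)}\leq C\|\nabla u_{0}\|_{L^{\infty}(K)}$, whence $\|w_{n}\|_{L^{\infty}(\partial\Omega')}+\|\nabla w_{n}\|_{L^{\infty}(\partial\Omega')}\leq C\|\nabla u_{0}\|_{L^{\infty}(K)}$. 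The contribution of $u_{0}$ is controlled by its own interior regularity: $u_{0}$ is $\gamma_{0}$-harmonic on all of $\Omega$, so interior $C^{1,\alpha}$ estimates near $\partial\Omega'$ together with the weak maximum principle on $\Omega'$ (which gives $\|u_{0}\|_{L^{\infty}(\Omega')}\leq\|u_{0}\|_{L^{\infty}(\partial\Omega')}$ since $\overline{\Omega'}\subset\Omega$) bound the corresponding norms of $u_{0}$ at $\partial\Omega'$ by $C\big(\|\nabla u_{0}\|_{L^{\infty}(K)}+\|u_{0}\|_{L^{\infty}(\partial\Omega')}\big)$. Adding the two contributions gives the first inequality.

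For \eqref{linftybasic}, apply the weak maximum principle to $u_{n}$ itself on $\Omega'$: $u_{n}$ solves $\divx(\gamma_{n}\nabla u_{n})=0$ weakly in $\Omega'$, and although $\lambda_{n},\Lambda_{n}$ are $n$-dependent and possibly extreme, the comparison constant for a divergence-form equation without zeroth-order term equals $1$, independently of the ellipticity ratio, so $\|u_{n}\|_{L^{\infty}(\Omega')}\leq\|u_{n}\|_{L^{\infty}(\partial\Omega')}$ (and likewise $\|u_{0}\|_{L^{\infty}(\Omega')}\leq\|u_{0}\|_{L^{\infty}(\partial\Omega')}$). Since $K\subset\Omega'$,
\[
\|w_{n}\|_{L^{\infty}(K)}\leq\|w_{n}\|_{L^{\infty}(\Omega')}\leq\|u_{n}\|_{L^{\infty}(\Omega')}+\|u_{0}\|_{L^{\infty}(\Omega')}\leq\|u_{n}\|_{L^{\infty}(\partial\Omega')}+\|u_{0}\|_{L^{\infty}(\partial\Omega')},
\]
and bounding $\|u_{n}\|_{L^{\infty}(\partial\Omega')}$ by the first inequality concludes the proof. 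The only genuine difficulty is to keep every constant uniform in $n$: the interior estimates are $n$-uniform precisely because the operator $\divx(\gamma_{0}\nabla\,\cdot\,)$ is $n$-independent on $\Omega\setminus\overline{K}$; the $L^{2}$-smallness of $w_{n}$ is $n$-uniform thanks to the energy estimate \eqref{energyest}; and the one point where the $n$-dependent bounds $\lambda_{n},\Lambda_{n}$ could intrude --- the maximum principle for $u_{n}$ used in \eqref{linftybasic} --- is harmless since there the comparison constant is $1$, irrespective of the ratio $\Lambda_{n}/\lambda_{n}$. Everything else (Poincar\'e--Friedrichs, the weak maximum principle, De Giorgi--Nash--Moser and interior Schauder estimates) is classical.
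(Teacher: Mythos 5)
Your route is essentially the paper's: you freeze the operator outside $K$ (where $\gamma_{n}=\gamma_{0}$), apply $n$-independent interior elliptic estimates on a compact neighbourhood of $\partial\Omega^{\prime}$ inside $\Omega\setminus\overline{K}$ to get $\left\Vert w_{n}\right\Vert _{C^{1}\left(\partial\Omega^{\prime}\right)}\lesssim\left\Vert w_{n}\right\Vert _{L^{2}\left(\Omega\setminus K\right)}$, control $\left\Vert w_{n}\right\Vert _{L^{2}\left(\Omega\setminus K\right)}$ by Poincar\'e combined with the energy estimate (\ref{eq:energyest}) restricted to $\Omega\setminus K$ where $\gamma_{n}=\gamma_{0}\geq\lambda_{0}I_{d}$, and deduce (\ref{eq:linftybasic}) from the weak maximum principle for $\mathop{\text{div}}\left(\gamma_{n}\nabla\cdot\right)$ on $\Omega^{\prime}$, whose comparison constant is indeed $1$ uniformly in $n$. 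These steps are correct and coincide with the paper's argument; for the Neumann problem you should invoke the Poincar\'e--Wirtinger variant (the normalisation is a zero mean, not a vanishing trace), as the paper indicates, but this is a minor omission.

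The step that does not hold up is your treatment of the $u_{0}$-contribution to $\left\Vert \nabla u_{n}\right\Vert _{L^{\infty}\left(\partial\Omega^{\prime}\right)}$. You claim that interior $C^{1,\alpha}$ estimates near $\partial\Omega^{\prime}$ together with the maximum principle on $\Omega^{\prime}$ yield $\left\Vert \nabla u_{0}\right\Vert _{L^{\infty}\left(\partial\Omega^{\prime}\right)}\leq C\left(\left\Vert \nabla u_{0}\right\Vert _{L^{\infty}(K)}+\left\Vert u_{0}\right\Vert _{L^{\infty}\left(\partial\Omega^{\prime}\right)}\right)$. An interior gradient estimate at a point $x\in\partial\Omega^{\prime}$ requires a sup bound for $u_{0}$ on a ball $B(x,r)$ which necessarily exits $\Omega^{\prime}$, and the maximum principle on $\Omega^{\prime}$ gives no control of $u_{0}$ there; the larger domain on which one would apply it is not dominated by the stated right-hand side. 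In fact no such inequality can hold with $C$ independent of the boundary datum: take $\gamma_{0}=I_{2}$, $\Omega=B(0,2)$, $\Omega^{\prime}=B(0,1)$, $K=B\left(0,\tfrac{2}{5}\right)$ and $u_{0}=\mathrm{Re}\left(\left(x_{1}+\mathrm{i}x_{2}\right)^{N}\right)$; then $\left\Vert \nabla u_{0}\right\Vert _{L^{\infty}\left(\partial\Omega^{\prime}\right)}=N$ while $\left\Vert u_{0}\right\Vert _{L^{\infty}\left(\partial\Omega^{\prime}\right)}=1$ and $\left\Vert \nabla u_{0}\right\Vert _{L^{\infty}(K)}=N\left(\tfrac{2}{5}\right)^{N-1}\to0$, so the ratio blows up like $N$. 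Note that the paper's own proof does not attempt this reduction: it stops at $\left\Vert u_{n}\right\Vert _{C^{1}\left(\partial\Omega^{\prime}\right)}\leq\left\Vert w_{n}\right\Vert _{C^{1}\left(\partial\Omega^{\prime}\right)}+\left\Vert u_{0}\right\Vert _{C^{1}\left(\partial\Omega^{\prime}\right)}$, i.e.\ it carries the full $C^{1}\left(\partial\Omega^{\prime}\right)$ norm of $u_{0}$ (which is what interior regularity actually provides, with a bound through $\left\Vert g\right\Vert _{H^{\frac{1}{2}}\left(\partial\Omega\right)}$), and only the $L^{\infty}$ part of the first display, where your splitting $u_{n}=w_{n}+u_{0}$ is harmless, feeds into (\ref{eq:linftybasic}). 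So everything you prove for the second display, for the $L^{\infty}$ part of the first display and for (\ref{eq:linftybasic}) is sound and matches the paper; the gradient claim for $u_{n}$ on $\partial\Omega^{\prime}$ cannot be justified the way you propose, and indeed requires the stronger norm of $u_{0}$ that the paper's own proof implicitly retains.
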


\begin{notation*}
In the sequel, we use the notation $a\lesssim b$ to mean $a\leq Cb$,
where $C$ is a constant, possibly changing from line to line depending
on the parameters announced in the claim we wish to prove.
\end{notation*}
\begin{proof}
Let $\Omega^{\prime}$ and $\Omega^{\prime\prime}$ be two open domains
such that $K\subset\Omega^{\prime\prime}\subset\Omega^{\prime}\subset\Omega$,
with $9d(\Omega^{\prime\prime},\partial\Omega^{\prime})>$$d(K,\partial\Omega)$
and 9$d(K,\partial\Omega^{\prime\prime})>d(K,\partial\Omega)$. Since
\[
-\divx\left(\gamma_{0}\nabla w_{n}\right)=0\quad\text{on}\quad\Omega^{\prime\prime}\setminus\Omega^{\prime}
\]
and $\gamma_{0}\in W^{2,d}\left(\Omega\right)$, classical regularity
theory shows that 
\begin{equation}
\left\Vert w_{n}\right\Vert _{C^{1}\left(\overline{\Omega^{\prime}\setminus\Omega^{\prime\prime}}\right)}\lesssim\left\Vert w_{n}\right\Vert _{L^{2}\left(\Omega\setminus K\right)}.\label{eq:classwn}
\end{equation}
By Poincaré's inequality (or Poincaré-Wirtinger's inequality depending
on $X$) since $w_{n}$ vanishes on $\partial\Omega$, there holds
\[
\left\Vert w_{n}\right\Vert _{L^{2}\left(\Omega\setminus K\right)}\lesssim\left\Vert \nabla w_{n}\right\Vert _{L^{2}\left(\Omega\setminus K\right)}.
\]
 On the other hand, using the fact that $\gamma_{n}=\gamma_{0}\geq\lambda_{0}I_{d}$
on $\Omega\setminus K,$ there holds 
\begin{align*}
\left\Vert \nabla w_{n}\right\Vert _{L^{2}\left(\Omega\setminus K\right)} & \leq\frac{1}{\sqrt{\lambda_{0}}}\left(E\left(w_{n}\right)\right)^{\frac{1}{2}}\\
 & \lesssim\left\Vert \dn\right\Vert _{L^{1}(\Omega)}^{\frac{1}{2}}\left\Vert \nabla u_{0}\right\Vert _{L^{\infty}(K)}\\
 & \lesssim\left\Vert \nabla u_{0}\right\Vert _{L^{\infty}(K)},
\end{align*}
where we used \eqref{energyest} for the penultimate inequality and
the fact that the sequence $\left\Vert \dn\right\Vert _{L^{1}(\Omega)}$
is bounded on the last line. Therefore on $\Omega\setminus\Omega^{\prime}$,
the function $w_{n}$ satisfies $\divx\left(\gamma_{0}\nabla w_{n}\right)=0$
with $\left|w_{n}\right|\lesssim\left\Vert \nabla u_{0}\right\Vert _{L^{\infty}(K)}$
on $\partial\Omega^{\prime}$ and satisfies a homogeneous boundary
condition on $\partial\Omega$ (or periodicity). By comparison, this
implies 
\[
\left\Vert w_{n}\right\Vert _{L^{\infty}\left(\Omega\setminus\Omega^{\prime}\right)}\lesssim\left\Vert \nabla u_{0}\right\Vert _{L^{\infty}(K)}.
\]
Furthermore, $u_{n}=w_{n}+u_{0}$ satisfies $\left\Vert u_{n}\right\Vert _{C^{1}\left(\partial\Omega^{\prime}\right)}\leq\left\Vert w_{n}\right\Vert _{C^{1}\left(\partial\Omega^{\prime}\right)}+\left\Vert u_{0}\right\Vert _{C^{1}\left(\partial\Omega^{\prime}\right)}$.
Finally, since $\divx\left(\gamma_{n}\nabla u_{n}\right)=0$ on $\Omega^{\prime}$,
by comparison $\left\Vert u_{n}\right\Vert _{L^{\infty}\left(\Omega^{\prime}\right)}=\left\Vert u_{n}\right\Vert _{C\left(\partial\Omega^{\prime}\right)}$,
and $\left\Vert w_{n}\right\Vert _{L^{\infty}\left(\Omega\right)}\leq\left\Vert w_{n}\right\Vert _{L^{\infty}\left(\Omega^{\prime}\right)}+\left\Vert w_{n}\right\Vert _{L^{\infty}\left(\Omega\setminus\Omega^{\prime}\right)}\lesssim\left\Vert \nabla u_{0}\right\Vert _{L^{\infty}(K)}+\left\Vert u_{0}\right\Vert _{L^{\infty}\left(\partial\Omega^{\prime}\right)}$
and the conclusion follows.
\end{proof}
Following the strategy introduced in \citep{capdeboscq-vogelius-03a},
we now show that the potential tends to zero faster than the gradient
via an Aubin--Céa--Nitsche argument. The novelty of this result
is that it depends on $\gamma_{n}$ only via on $\left\Vert \dn\right\Vert _{L^{1}(\Omega)}$.
\begin{lem}
\label{lem:L2bound} For any $\tau\in\left[1,\frac{d}{d-1}\right)$,
and given $\Omega^{\prime}$ a smooth domain as defined in \propref{Polar-Outline},
there holds 
\begin{equation}
\left\Vert w_{n}\right\Vert _{L^{2}(\Omega)}\leq C\left\Vert \dn\right\Vert _{L^{1}(\Omega)}^{\frac{\tau}{2}}\left(\left\Vert \nabla u_{0}\right\Vert _{L^{\infty}(K)}+\left\Vert u_{0}\right\Vert _{L^{\infty}\left(\partial\Omega^{\prime}\right)}\right),\label{eq:improveL2general}
\end{equation}
with the constant $C$ may depend on $\tau$, $\text{\ensuremath{\Omega}, \ensuremath{K}}$,
$\left\Vert \gamma_{0}\right\Vert _{W^{2,d}\left(\Omega\right)}$
, and the a priori bounds $\Lambda_{0}$ and $\lambda_{0}$ only.
\end{lem}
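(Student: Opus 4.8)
The plan is to carry out the Aubin--C\'ea--Nitsche duality argument announced above against the \emph{background} operator $-\divx(\gamma_{0}\nabla\cdot)$, whose coefficients are smooth in the interior. The key point is that the source term produced by the inclusions in the equation for $w_{n}$ is controlled only in $L^{1}(\Omega)$ --- because $\gamma_{n}$ may be unbounded on $\qn$ and degenerate on $\rn$, nothing sharper than \eqref{L1grad} is available under assumptions \enuref{well-within}--\enuref{notlow} alone --- and this $L^{1}$ ceiling is exactly what caps the exponent at the borderline value $\frac{d}{d-1}$.

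Fix $s\in[1,\frac{d}{d-1})$ so that the conjugate exponent obeys $s'>d$, and fix a smooth set $K'$ with $K\Subset K'\Subset\Omega$; I treat the Dirichlet case, the Neumann case being identical with the corresponding variational formulation and Neumann dual problem. For $\psi\in C_{c}^{\infty}(\Omega)$ let $v\in H_{0}^{1}(\Omega)$ solve $\int_{\Omega}\gamma_{0}\nabla v\cdot\nabla\phi\,\text{d}x=\int_{\Omega}\psi\phi\,\text{d}x$ for all $\phi\in H_{0}^{1}(\Omega)$. Testing this identity with $\phi=w_{n}$, testing \eqref{11} with $\phi=v$, and using the symmetry of $\gamma_{0}$, one gets
\[
\int_{\Omega}w_{n}\psi\,\text{d}x=\int_{\Omega}\big(\gamma_{0}-\gamma_{n}\big)\big(\nabla w_{n}+\nabla u_{0}\big)\cdot\nabla v\,\text{d}x ,
\]
in which the integrand is supported in $K$, so only $\|\nabla v\|_{L^{\infty}(K)}$ will be needed. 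Since $\psi\in L^{s'}(\Omega)$ with $s'>d$, since $\gamma_{0}\in W_{\mathrm{loc}}^{2,d}(\mathbb{R}^{d})\subset C^{0}$ is uniformly elliptic, and since $\|v\|_{H^{1}(\Omega)}\lesssim\|\psi\|_{L^{2}(\Omega)}\lesssim\|\psi\|_{L^{s'}(\Omega)}$, interior $W^{2,s'}$ elliptic regularity on $K'$ (reached by a finite Sobolev bootstrap) followed by Morrey's embedding gives $\|\nabla v\|_{L^{\infty}(K)}\leq C\|\psi\|_{L^{s'}(\Omega)}$, with $C$ depending only on $s'$, $\Omega$, $K$, $\lambda_{0}$, $\Lambda_{0}$ and $\|\gamma_{0}\|_{W^{2,d}(\Omega)}$.

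Next I would bound the right-hand side of the identity by $\|\nabla v\|_{L^{\infty}(K)}\,\|(\gamma_{0}-\gamma_{n})(\nabla w_{n}+\nabla u_{0})\|_{L^{1}(\Omega)}$ and invoke \eqref{L1grad} together with $|\gamma_{n}-\gamma_{0}|_{F}\leq|\dn|_{F}$ a.e.\ on $\qn\cup\rn$ from \eqref{notlowalpha}, which yield
\[
\big\|(\gamma_{0}-\gamma_{n})(\nabla w_{n}+\nabla u_{0})\big\|_{L^{1}(\Omega)}\leq 2\left\Vert \dn\right\Vert _{L^{1}(\Omega)}\left\Vert \nabla u_{0}\right\Vert _{L^{\infty}(K)};
\]
taking the supremum over $\psi$ with $\|\psi\|_{L^{s'}(\Omega)}\leq 1$ then produces $\left\Vert w_{n}\right\Vert _{L^{s}(\Omega)}\leq C\left\Vert \dn\right\Vert _{L^{1}(\Omega)}\left\Vert \nabla u_{0}\right\Vert _{L^{\infty}(K)}$ for every $s<\frac{d}{d-1}$. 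Finally I would interpolate with the uniform bound $\left\Vert w_{n}\right\Vert _{L^{\infty}(\Omega)}\leq C\big(\left\Vert \nabla u_{0}\right\Vert _{L^{\infty}(K)}+\left\Vert u_{0}\right\Vert _{L^{\infty}(\partial\Omega^{\prime})}\big)$ provided by \lemref{Linftysimple}: taking $s=\tau$ and using $\tau<\frac{d}{d-1}\leq 2$,
\[
\left\Vert w_{n}\right\Vert _{L^{2}(\Omega)}^{2}\leq\left\Vert w_{n}\right\Vert _{L^{\infty}(\Omega)}^{2-\tau}\left\Vert w_{n}\right\Vert _{L^{\tau}(\Omega)}^{\tau}\leq C\left\Vert \dn\right\Vert _{L^{1}(\Omega)}^{\tau}\big(\left\Vert \nabla u_{0}\right\Vert _{L^{\infty}(K)}+\left\Vert u_{0}\right\Vert _{L^{\infty}(\partial\Omega^{\prime})}\big)^{2},
\]
which is \eqref{improveL2general} after taking square roots.

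The hard part is simply the $L^{1}$-only control of $(\gamma_{0}-\gamma_{n})(\nabla w_{n}+\nabla u_{0})$: no better integrability of this source is available without the supplementary hypotheses of \enuref{extra}, so one genuinely cannot take $s'\leq d$ and must accept the borderline Sobolev exponent --- hence the threshold $\tau<\frac{d}{d-1}$. Two technical points deserve care. First, the regularity estimate for the dual solution $v$ must be performed in the interior region $K'$, where $\gamma_{0}$ is smooth, so that the mere Lipschitz regularity of $\partial\Omega$ plays no role; one should check that $\gamma_{0}\in W_{\mathrm{loc}}^{2,d}$ is continuous and that a finite bootstrap reaches $W^{2,s'}$ locally for any finite $s'$. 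Second, in the Neumann case one verifies that the dual problem is well posed, which holds because both $w_{n}$ and the admissible test functions $\psi$ may be taken mean-zero, so $\int_{\Omega}w_{n}\psi\,\text{d}x$ still realises $\|w_{n}\|_{L^{2}(\Omega)}$ by duality.
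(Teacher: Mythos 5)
Your argument is correct, and it is a genuine (if close) variant of the paper's proof rather than a reproduction of it. Both are Aubin--C\'ea--Nitsche duality arguments against the background operator, and both hit the same threshold $\tau<\frac{d}{d-1}$ for the same reason (the dual datum must lie in $L^{s'}$ with $s'>d$ so that interior elliptic regularity with $\gamma_{0}\in W^{2,d}$ yields $\nabla v\in L^{\infty}(K)$). The differences are in the organisation: the paper takes the dual datum to be $w_{n}$ itself, controls the two source terms by the weighted Cauchy--Schwarz inequality with weight $d_{n}$ (the same device that proves \eqref{L1grad}), obtains $\left\Vert w_{n}\right\Vert _{L^{2}}^{2}\leq2\left\Vert d_{n}\right\Vert _{L^{1}}\left\Vert \nabla u_{0}\right\Vert _{L^{\infty}(K)}\left\Vert \nabla\psi_{n}\right\Vert _{L^{\infty}(K)}$, and then interpolates $\left\Vert w_{n}\right\Vert _{L^{d+\epsilon}}$ inside that inequality using \lemref{Linftysimple}; you instead dualise against an arbitrary $\psi\in L^{s'}$, $s'>d$, and feed in the already established estimates \eqref{L1grad} and \eqref{notlowalpha}, which gives the intermediate bound $\left\Vert w_{n}\right\Vert _{L^{s}(\Omega)}\lesssim\left\Vert d_{n}\right\Vert _{L^{1}(\Omega)}\left\Vert \nabla u_{0}\right\Vert _{L^{\infty}(K)}$ for every $s<\frac{d}{d-1}$, and only afterwards interpolate with the $L^{\infty}$ bound of \lemref{Linftysimple}. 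What your route buys: the intermediate $L^{s}$ estimate carries the full power $\left\Vert d_{n}\right\Vert _{L^{1}}$ and no $\left\Vert u_{0}\right\Vert _{L^{\infty}(\partial\Omega^{\prime})}$ term, is sharp in view of example~\exaref{countrex}, and immediately yields \eqref{Lsbound} as well as \eqref{improveL2general}; moreover you only invoke interior $W^{2,s'}$ regularity on $K$, which sidesteps the global $W^{2,q}$ estimate up to the (merely Lipschitz) boundary that the paper's \eqref{boundH2q} appeals to. Two small points of care, both of which you flag and which are fine: the bootstrap giving $\left\Vert \nabla v\right\Vert _{L^{\infty}(K)}\lesssim\left\Vert \psi\right\Vert _{L^{s'}(\Omega)}$ requires nesting a finite chain of interior domains (the constants then depend only on the admissible quantities), and in the Neumann case the duality should be phrased as realising $\left\Vert w_{n}\right\Vert _{L^{s}}$ (not $L^{2}$) over mean-zero $\psi$, which loses only a harmless factor since $\int_{\Omega}w_{n}\,\text{d}x=0$.
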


\begin{proof}
Consider the following auxiliary equation 
\begin{eqnarray}
-\divx\left(\gamma_{0}\nabla\psi_{n}\right) & = & w_{n}\quad\text{in}\quad\text{\ensuremath{\Omega}}\label{eq:adjoint}\\
\psi_{n} & = & 0\quad\text{on}\quad\partial\Omega.\nonumber 
\end{eqnarray}
Since $\gamma_{0}\in W^{2,d}\left(\Omega;\mathbb{R}^{d\times d}\right)$
we infer from elliptic regularity theory (see e.g. \citep{zbMATH00961093})
that for any $q\geq2$, the solution $\psi_{n}$ satisfies
\begin{equation}
\left\Vert \psi_{n}\right\Vert _{W^{2,q}(\Omega)}+\left\Vert \psi_{n}\right\Vert _{W^{1,q}(\Omega)}\lesssim\left\Vert w_{n}\right\Vert _{L^{q}(\Omega)}.\label{eq:boundH2q}
\end{equation}
Testing \eqref{adjoint} with $w_{n}$, and recalling that $\supp\left(\gamma_{n}-\gamma_{0}\right)\subset\left(\qn\cup\rn\right)\subset K$,
an integration by parts shows 
\begin{align}
\left\Vert w_{n}\right\Vert _{L^{2}(\Omega)}^{2} & =\int_{\Omega}\gamma_{0}\nabla\psi_{n}\cdot\nabla w_{n}\,\text{d}x\nonumber \\
 & =\int_{\Omega}\left(\gamma_{0}-\gamma_{n}\right)\nabla w_{n}\cdot\nabla\psi_{n}\,\text{d}x+\int_{\Omega}\gamma_{n}\nabla\psi_{n}\cdot\nabla w_{n}\,\text{d}x\nonumber \\
 & =\int_{\qn\cup\rn}\left(\gamma_{0}-\gamma_{n}\right)\nabla w_{n}\cdot\nabla\psi_{n}+\int_{\qn\cup\rn}\left(\gamma_{0}-\gamma_{n}\right)\nabla u_{0}\cdot\nabla\psi_{n}\label{eq:AubinNitsche}
\end{align}
Using Cauchy--Schwarz, we find
\begin{align*}
 & \int_{\qn\cup\rn}\left(\gamma_{0}-\gamma_{n}\right)\nabla w_{n}\cdot\nabla\psi_{n}\,\text{d}x\\
\leq & \bigg(\int_{\qn\cup\rn}\gamma_{n}\nabla w_{n}\cdot\nabla w_{n}\,\text{d}x\bigg)^{\frac{1}{2}}\bigg(\int_{\Omega}\dn\nabla\psi_{n}\cdot\nabla\psi_{n}\,\text{d}x\bigg)^{\frac{1}{2}},
\end{align*}
and thanks to \eqref{energyest}, 
\[
\int_{\qn\cup\rn}\left(\gamma_{0}-\gamma_{n}\right)\nabla w_{n}\cdot\nabla\psi_{n}\,\text{d}x\leq\left\Vert \dn\right\Vert _{L^{1}\left(\Omega\right)}\left\Vert \nabla u_{0}\right\Vert _{L^{\infty}\left(K\right)}\left\Vert \nabla\psi_{n}\right\Vert _{L^{\infty}\left(K\right)}.
\]
Similarly, using \eqref{notlowalpha},
\begin{align*}
 & \int_{\qn\cup\rn}\left(\gamma_{0}-\gamma_{n}\right)\nabla u_{0}\cdot\nabla\psi_{n}\,\text{d}x\\
\leq & \bigg(\int_{\qn\cup\rn}\gamma_{n}\nabla u_{0}\cdot\nabla u_{0}\,\text{d}x\bigg)^{\frac{1}{2}}\bigg(\int_{\Omega}\dn\nabla\psi_{n}\cdot\nabla\psi_{n}\,\text{d}x\bigg)^{\frac{1}{2}}\\
\leq & \left\Vert \dn\right\Vert _{L^{1}\left(\Omega\right)}\left\Vert \nabla u_{0}\right\Vert _{L^{\infty}\left(K\right)}\left\Vert \nabla\psi_{n}\right\Vert _{L^{\infty}\left(K\right)}.
\end{align*}
 and \eqref{AubinNitsche} becomes 
\begin{equation}
\left\Vert w_{n}\right\Vert _{L^{2}(\Omega)}^{2}\leq2\left\Vert \dn\right\Vert _{L^{1}\left(\Omega\right)}\left\Vert \nabla u_{0}\right\Vert _{L^{\infty}\left(K\right)}\left\Vert \nabla\psi_{n}\right\Vert _{L^{\infty}\left(K\right)}.\label{eq:AubinN-2}
\end{equation}
On the other hand, choosing $q=d+\epsilon$ in \eqref{boundH2q} there
holds 
\begin{equation}
\left\Vert \nabla\psi_{n}\right\Vert _{L^{\infty}(\Omega)}\lesssim\left\Vert \psi_{n}\right\Vert _{W^{2,d+\epsilon}\left(\Omega\right)}\lesssim\left\Vert w_{n}\right\Vert _{L^{d+\epsilon}(\Omega)}\label{regular H2}
\end{equation}
By interpolation, and using the a priori bound (\ref{eq:linftybasic})
for $w_{n}$ given in \lemref{Linftysimple}, we find
\begin{equation}
\left\Vert w_{n}\right\Vert _{L^{d+\epsilon}(\Omega)}\leq\left\Vert w_{n}\right\Vert _{L^{2}(\Omega)}^{\frac{2}{d+\epsilon}}\left\Vert w_{n}\right\Vert _{L^{\infty}(\Omega)}^{1-\frac{2}{d+\epsilon}}\lesssim\left\Vert w_{n}\right\Vert _{L^{2}(\Omega)}^{\frac{2}{d+\epsilon}}\left(\left\Vert \nabla u_{0}\right\Vert _{L^{\infty}(K)}+\left\Vert u_{0}\right\Vert _{L^{\infty}\left(\partial\Omega^{\prime}\right)}\right)^{1-\frac{2}{d+\epsilon}}.\label{eq:AubinN-4}
\end{equation}
Combining \eqref{AubinN-2}, \ref{regular H2}, and \eqref{AubinN-4},
we have obtained 
\begin{align*}
\left\Vert w_{n}\right\Vert _{L^{2}(\Omega)}^{2\left(1-\frac{1}{d+\epsilon}\right)} & \lesssim\left\Vert \dn\right\Vert _{L^{1}\left(\Omega\right)}\left\Vert \nabla u_{0}\right\Vert _{L^{\infty}\left(K\right)}\left(\left\Vert \nabla u_{0}\right\Vert _{L^{\infty}(K)}+\left\Vert u_{0}\right\Vert _{L^{\infty}\left(\partial\Omega^{\prime}\right)}\right)^{1-\frac{2}{d+\epsilon}}\\
 & \lesssim\left\Vert \dn\right\Vert _{L^{1}\left(\Omega\right)}\left(\left\Vert \nabla u_{0}\right\Vert _{L^{\infty}(K)}+\left\Vert u_{0}\right\Vert _{L^{\infty}\left(\partial\Omega^{\prime}\right)}\right)^{2\left(1-\frac{1}{d+\epsilon}\right)},
\end{align*}
which is equivalent to \eqref{improveL2general}.
\end{proof}
\begin{rem}
Note that estimate \eqref{improveL2general} improves on previous
estimates, even in the case of bounded contrasts (see \citep[lemma 1]{capdeboscq-vogelius-03a}).
It is arbitrarily close to the estimate one obtains for a fixed, scaled
shape with constant scalar conductivity \citep{ammari-kang-04}.
\end{rem}

\begin{cor}
For any $q\geq2$ and any $\tau\in\left[1,\frac{d}{d-1}\right)$ ,
with the same notations as in \lemref{L2bound}, there holds
\begin{equation}
\left\Vert w_{n}\right\Vert _{L^{q}(\Omega)}\leq C\left\Vert \dn\right\Vert _{L^{1}(\Omega)}^{\frac{\tau}{q}}\left(\left\Vert \nabla u_{0}\right\Vert _{L^{\infty}(K)}+\left\Vert u_{0}\right\Vert _{L^{\infty}\left(\partial\Omega^{\prime}\right)}\right).\label{eq:Lsbound}
\end{equation}
Furthermore, $w_{n}$ solution of \eqref{defwn0} satisfies
\begin{equation}
\left\llbracket \nabla w_{n}\right\rrbracket _{L^{\infty}\left(\partial\Omega^{\prime}\right)}+\left\llbracket w_{n}\right\rrbracket _{L^{\infty}\left(\partial\Omega^{\prime}\right)}\leq C\left\Vert \dn\right\Vert _{L^{1}\left(\Omega\right)}^{\frac{\tau}{2}}\left(\left\Vert \nabla u_{0}\right\Vert _{L^{\infty}(K)}+\left\Vert u_{0}\right\Vert _{L^{\infty}\left(\partial\Omega^{\prime}\right)}\right).\label{eq:bdylinfykprime}
\end{equation}
\end{cor}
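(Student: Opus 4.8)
The plan is to derive both inequalities by feeding the sharp $L^{2}$ estimate of \lemref{L2bound} into, respectively, an elementary interpolation inequality and the boundary estimate of \lemref{Linftysimple}; no new analysis is needed, so the argument is short.

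For \eqref{Lsbound} I would start from the interpolation bound
\[
\left\Vert w_{n}\right\Vert _{L^{q}(\Omega)}\leq\left\Vert w_{n}\right\Vert _{L^{2}(\Omega)}^{\frac{2}{q}}\left\Vert w_{n}\right\Vert _{L^{\infty}(\Omega)}^{1-\frac{2}{q}},
\]
which holds for every $q\geq2$ because $\int_{\Omega}|w_{n}|^{q}\leq\left\Vert w_{n}\right\Vert _{L^{\infty}(\Omega)}^{q-2}\left\Vert w_{n}\right\Vert _{L^{2}(\Omega)}^{2}$. The first factor is controlled by \eqref{improveL2general}, which supplies $\left\Vert \dn\right\Vert _{L^{1}(\Omega)}^{\tau/q}$ together with $\big(\left\Vert \nabla u_{0}\right\Vert _{L^{\infty}(K)}+\left\Vert u_{0}\right\Vert _{L^{\infty}(\partial\Omega^{\prime})}\big)^{2/q}$. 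For the second factor I would use the \emph{global} $L^{\infty}$ bound $\left\Vert w_{n}\right\Vert _{L^{\infty}(\Omega)}\lesssim\left\Vert \nabla u_{0}\right\Vert _{L^{\infty}(K)}+\left\Vert u_{0}\right\Vert _{L^{\infty}(\partial\Omega^{\prime})}$, which is established in the course of the proof of \lemref{Linftysimple} by combining \eqref{linftybasic} with the maximum-principle comparison on $\Omega\setminus\Omega^{\prime}$. Multiplying the two bounds and adding the exponents of $\left\Vert \nabla u_{0}\right\Vert _{L^{\infty}(K)}+\left\Vert u_{0}\right\Vert _{L^{\infty}(\partial\Omega^{\prime})}$, namely $\frac{2}{q}+\big(1-\frac{2}{q}\big)=1$, gives \eqref{Lsbound}, the constant inheriting the dependences recorded in \lemref{L2bound} and \lemref{Linftysimple}.

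For \eqref{bdylinfykprime} I would simply insert the $L^{2}$ bound into the boundary estimate of \lemref{Linftysimple}: the latter controls $\left\Vert w_{n}\right\Vert _{L^{\infty}(\partial\Omega^{\prime})}+\left\Vert \nabla w_{n}\right\Vert _{L^{\infty}(\partial\Omega^{\prime})}$, hence the left-hand side of \eqref{bdylinfykprime}, by $\left\Vert w_{n}\right\Vert _{L^{2}(\Omega\setminus K)}$, and $\left\Vert w_{n}\right\Vert _{L^{2}(\Omega\setminus K)}\leq\left\Vert w_{n}\right\Vert _{L^{2}(\Omega)}$, so \eqref{improveL2general} finishes the argument at once.

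Since every ingredient is already available, there is no real obstacle here; the one point deserving a little care is that the $L^{\infty}$ factor used in the interpolation step for \eqref{Lsbound} must be the global bound on all of $\Omega$, and not merely the bound on $K$ stated in \eqref{linftybasic} — that global bound is exactly what the comparison argument closing the proof of \lemref{Linftysimple} provides, and it is what lets \eqref{Lsbound} hold on the whole domain.
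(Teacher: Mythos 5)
Your argument is correct and is essentially the paper's own proof: interpolate $\left\Vert w_{n}\right\Vert _{L^{q}(\Omega)}\leq\left\Vert w_{n}\right\Vert _{L^{2}(\Omega)}^{2/q}\left\Vert w_{n}\right\Vert _{L^{\infty}(\Omega)}^{1-2/q}$ and invoke \lemref{L2bound} together with the $L^{\infty}$ control from \lemref{Linftysimple}, then obtain \eqref{bdylinfykprime} by feeding \eqref{improveL2general} into the boundary estimate of \lemref{Linftysimple}. Your remark that the interpolation needs the global $L^{\infty}(\Omega)$ bound produced inside the proof of \lemref{Linftysimple}, rather than literally the $L^{\infty}(K)$ statement \eqref{linftybasic}, is a fair and accurate reading of what the paper actually uses.
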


\begin{proof}
We write 
\[
\left\Vert w_{n}\right\Vert _{L^{s}(\Omega)}\leq\left\Vert w_{n}\right\Vert _{L^{2}(\Omega)}^{\frac{2}{q}}\left\Vert w_{n}\right\Vert _{L^{\infty}(\Omega)}^{1-\frac{2}{q}}
\]
and estimate \eqref{Lsbound} follows from \eqref{improveL2general}
and \eqref{linftybasic}. Estimate \eqref{bdylinfykprime} follows
from \lemref{Linftysimple} and \lemref{L2bound}.
\end{proof}
We now address the independence of the polarisation tensor $M$ from
the boundary conditions.
\begin{proof}[Proof of \lemref{boundaryindep}]
. Given $\tau=\left(0,\frac{1}{2}\frac{1}{d-1}\right),$ Following
the steps of \ref{lem:L2bound} with \eqref{varfom} and $w_{n}^{Y}$,
we find
\begin{equation}
\left\Vert w_{n}^{Y}\right\Vert _{L^{2}\left(\tilde{\Omega}\right)}\lesssim\left\Vert \dn\right\Vert _{L^{1}(\Omega)}^{\frac{1+2\tau}{2}}\left(\left\Vert \nabla u_{0}\right\Vert _{L^{\infty}(K)}+\left\Vert u_{0}\right\Vert _{L^{\infty}\left(\partial\Omega^{\prime}\right)}\right).\label{eq:impvl2}
\end{equation}
Now, we choose a smooth cut-off function $\chi\in C_{c}^{\infty}\left(\tilde{\Omega}\right)$
such that $\chi=1$ on $K$. Noting that $\divx\left(\gamma_{n}\nabla\left(w_{n}^{X}-w_{n}^{Y}\right)\right)=0$
on $\hat{\Omega},$ Caccioppoli's inequality writes 
\[
\int_{\tilde{\Omega}}\gamma_{n}\nabla\left(\chi\left(w_{n}^{Y}-w_{n}^{X}\right)\right)\cdot\nabla\left(\chi\left(w_{n}^{Y}-w_{n}^{X}\right)\right)\text{d}x=\int_{\tilde{\Omega}\setminus K}\left(\gamma_{0}\nabla\chi\cdot\nabla\chi\right)\left(w_{n}^{Y}-w_{n}^{X}\right)^{2}\text{d}x,
\]
that is,
\begin{align*}
\int_{\tilde{\Omega}}\gamma_{n}\nabla\left(w_{n}^{Y}-w_{n}^{X}\right)\cdot\nabla\left(w_{n}^{Y}-w_{n}^{X}\right)\text{d}x & \leq C\left(\tilde{\Omega},K\right)\left(\left\Vert w_{n}^{X}\right\Vert _{L^{2}\left(\Omega\right)}^{2}+\left\Vert w_{n}^{Y}\right\Vert _{L^{2}\left(\tilde{\Omega}\right)}^{2}\right),\\
 & \lesssim\left\Vert \dn\right\Vert _{L^{1}(\Omega)}^{1+2\tau}\left(\left\Vert \nabla u_{0}\right\Vert _{L^{\infty}(K)}+\left\Vert u_{0}\right\Vert _{L^{\infty}\left(\partial\Omega^{\prime}\right)}\right)^{2}
\end{align*}
This in turn shows, by Cauchy-Schwarz,
\begin{align*}
\|\big(\gamma_{n}-\gamma_{0}\big)\nabla\left(w_{n}^{Y}-w_{n}^{X}\right)\|_{L^{1}(\Omega)} & \leq\left\Vert \dn\right\Vert _{L^{1}(\Omega)}^{\frac{1}{2}}\Bigg(\int_{K}\gamma_{n}\nabla\left(w_{n}^{Y}-w_{n}^{X}\right)\cdot\nabla\left(w_{n}^{Y}-w_{n}^{X}\right)\text{d}x\Bigg)^{\frac{1}{2}}\\
 & \leq C\left(\tilde{\Omega},K\right)\|\dn\|_{L^{1}\left(\Omega\right)}^{1+\tau}\left(\left\Vert \nabla u_{0}\right\Vert _{L^{\infty}(K)}+\left\Vert u_{0}\right\Vert _{L^{\infty}\left(\partial\Omega^{\prime}\right)}\right)
\end{align*}
As a result, $\frac{1}{\left\Vert \dn\right\Vert _{L^{1}\left(\Omega\right)}}\left\Vert \left(\gamma_{n}-\gamma_{0}\right)\nabla\left(w_{n}^{Y}-w_{n}^{X}\right)\right\Vert _{L^{1}(\Omega)}\to0$,
which implies equivalence that the limiting measures resulting from
$\frac{1}{\left\Vert \dn\right\Vert _{L^{1}\left(\Omega\right)}}\left(\gamma_{n}-\gamma_{0}\right)\nabla w_{n}^{X}$
and $\frac{1}{\left\Vert \dn\right\Vert _{L^{1}\left(\Omega\right)}}\left(\gamma_{n}-\gamma_{0}\right)\nabla w_{n}^{Y}$
are equal.
\end{proof}

\section{\label{sec:ProofPolar}Proof of \propref{Polar-Outline}}

We use the following corollary to the a priori energy estimate given
in \propref{EnergyEstimateGal}.
\begin{cor*}[Corollary to \propref{EnergyEstimateGal}]
 For any $p\geq1$, there holds 
\begin{equation}
\left\Vert \gamma_{n}\nabla w_{n}\right\Vert _{L^{\frac{2p}{p+1}}\left(\qn\right)}\leq d^{\frac{1}{4}}\left\Vert \dn\right\Vert _{L^{1}\left(\Omega\right)}^{\frac{1}{2}}\left\Vert \dn\right\Vert _{L^{p}\left(\qn\right)}^{\frac{1}{2}}\left\Vert \nabla u_{0}\right\Vert _{L^{\infty}(K)}.\label{eq:lpboundflux}
\end{equation}
\end{cor*}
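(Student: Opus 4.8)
The plan is to reduce \eqref{lpboundflux} to the a priori energy bound \eqref{energyest} by means of a pointwise comparison between $\left|\gamma_{n}\nabla w_{n}\right|_{d}$ and the energy density $\gamma_{n}\nabla w_{n}\cdot\nabla w_{n}$ on $\qn$, followed by a single application of H\"older's inequality. First I would record an elementary inequality for a symmetric positive semi-definite $d\times d$ matrix $A$ and a vector $v\in\mathbb{R}^{d}$: writing $Av=A^{1/2}\big(A^{1/2}v\big)$ and using the submultiplicativity $\left|BU\right|_{d}\le\left|B\right|_{F}\left|U\right|_{d}$ recalled in the introduction, together with $\big|A^{1/2}\big|_{F}^{2}=\text{tr}(A)$ and $\big|A^{1/2}v\big|_{d}^{2}=Av\cdot v$, one gets $\left|Av\right|_{d}^{2}\le\text{tr}(A)\,\big(Av\cdot v\big)$. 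Since $\text{tr}(A)=\sum_{i}\lambda_{i}(A)\le\sqrt{d}\,\big(\sum_{i}\lambda_{i}(A)^{2}\big)^{1/2}=\sqrt{d}\,\left|A\right|_{F}$ by Cauchy--Schwarz applied to the vector of eigenvalues of $A$, this gives $\left|Av\right|_{d}^{2}\le\sqrt{d}\,\left|A\right|_{F}\,\big(Av\cdot v\big)$. Taking $A=\gamma_{n}$, $v=\nabla w_{n}(x)$ and using $\left|\gamma_{n}\right|_{F}\le\left|\dn\right|_{F}$ on $\qn$ from \eqref{notlowalpha} yields
\[
\left|\gamma_{n}\nabla w_{n}\right|_{d}^{2}\le\sqrt{d}\,\left|\dn\right|_{F}\,\big(\gamma_{n}\nabla w_{n}\cdot\nabla w_{n}\big)\quad\text{a.e. on }\qn.
\]

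Next I set $r=\frac{2p}{p+1}$, raise the previous inequality to the power $\frac{p}{p+1}=\frac{r}{2}$ and integrate over $\qn$, obtaining
\[
\int_{\qn}\left|\gamma_{n}\nabla w_{n}\right|_{d}^{r}\,\text{d}x\le d^{\frac{p}{2(p+1)}}\int_{\qn}\left|\dn\right|_{F}^{\frac{p}{p+1}}\big(\gamma_{n}\nabla w_{n}\cdot\nabla w_{n}\big)^{\frac{p}{p+1}}\,\text{d}x.
\]
I would then apply H\"older's inequality with the conjugate exponents $p+1$ and $\frac{p+1}{p}$, chosen precisely so that the energy density appears to the first power; this bounds the right-hand side by $d^{\frac{p}{2(p+1)}}\left\Vert \dn\right\Vert _{L^{p}(\qn)}^{\frac{p}{p+1}}\big(\int_{\qn}\gamma_{n}\nabla w_{n}\cdot\nabla w_{n}\,\text{d}x\big)^{\frac{p}{p+1}}$. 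Finally, since $\gamma_{n}$ is positive semi-definite, $\int_{\qn}\gamma_{n}\nabla w_{n}\cdot\nabla w_{n}\,\text{d}x\le E(w_{n})\le\left\Vert \dn\right\Vert _{L^{1}(\Omega)}\left\Vert \nabla u_{0}\right\Vert _{L^{\infty}(K)}^{2}$ by \propref{EnergyEstimateGal}, and raising everything to the power $\frac{1}{r}=\frac{p+1}{2p}$ collapses the power of $d$ to $\frac{p}{2(p+1)}\cdot\frac{p+1}{2p}=\frac{1}{4}$ and reproduces \eqref{lpboundflux}.

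No genuine difficulty arises. The two points worth attention are the bookkeeping of the H\"older exponents — it is the requirement that $\gamma_{n}\nabla w_{n}\cdot\nabla w_{n}$ enter linearly, so as to be controlled by \eqref{energyest}, that forces the choice $r=\frac{2p}{p+1}$ — and the decision to estimate $\text{tr}(A)$ by $\sqrt{d}\,\left|A\right|_{F}$ rather than by the smaller spectral norm of $A$, which is exactly what produces the dimensional constant $d^{1/4}$ in the statement.
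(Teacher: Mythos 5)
Your proof is correct and follows essentially the same route as the paper: the pointwise inequality $\left|\gamma_{n}\nabla w_{n}\right|_{d}^{2}\le\operatorname{tr}\left(\gamma_{n}\right)\gamma_{n}\nabla w_{n}\cdot\nabla w_{n}\le\sqrt{d}\left|\dn\right|_{F}\,\gamma_{n}\nabla w_{n}\cdot\nabla w_{n}$ is exactly the paper's factorisation through $\gamma_{n}^{\frac{1}{2}}$ combined with $\left|A\right|_{F}^{2}\le\sqrt{d}\left|A^{2}\right|_{F}$, and your H\"older step with exponents $p+1$ and $\frac{p+1}{p}$ is the same bookkeeping as the paper's H\"older estimate $\left\Vert \gamma_{n}\nabla w_{n}\right\Vert _{L^{\frac{2p}{p+1}}\left(\qn\right)}\le\left\Vert \gamma_{n}^{\frac{1}{2}}\right\Vert _{L^{2p}\left(\qn\right)}E\left(w_{n}\right)^{\frac{1}{2}}$, followed by the same use of \eqref{notlowalpha} and \eqref{energyest}.
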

\begin{proof}
Using Hölder's inequality, it holds that for any $p\geq1$
\begin{equation}
\left\Vert \gamma_{n}\nabla w_{n}\right\Vert _{L^{\frac{2p}{p+1}}\left(\qn\right)}\leq\left\Vert \gamma_{n}^{\frac{1}{2}}\right\Vert _{L^{2p}\left(\qn\right)}\left(E\left(w_{n}\right)\right)^{\frac{1}{2}}.\label{eq:l2bdgrad0-1}
\end{equation}
We have
\[
\left\Vert \gamma_{n}^{\frac{1}{2}}\right\Vert _{L^{2p}\left(\qn\right)}=\left(\int_{\qn}\left|\gamma_{n}^{\frac{1}{2}}\right|_{F}^{2p}\text{d}x\right)^{\frac{1}{2p}},
\]
and, using the fact that for $d\times d$ symmetric matrix $A$, $\left|A^{2}\right|_{F}\leq\left|A\right|_{F}^{2}\leq\sqrt{d}\left|A^{2}\right|_{F},$we
find, using \eqref{notlowalpha},
\begin{equation}
\left\Vert \gamma_{n}^{\frac{1}{2}}\right\Vert _{L^{2p}\left(\qn\right)}\leq d^{\frac{1}{4}}\left(\int_{\qn}\left|\gamma_{n}\right|_{F}^{p}\text{d}x\right)^{\frac{1}{2p}}=d^{\frac{1}{4}}\left\Vert \gamma_{n}\right\Vert _{L^{p}\left(\qn\right)}^{\frac{1}{2}}\leq d^{\frac{1}{4}}\left\Vert \dn\right\Vert _{L^{p}\left(\qn\right)}^{\frac{1}{2}}.\label{eq:l2bdgrad1-1}
\end{equation}
Putting together \eqref{energyest}, \eqref{l2bdgrad0-1} and \eqref{l2bdgrad1-1}
the conclusion follows.
\end{proof}
The following error estimate is a key tool for the proof of ~\propref{Polar-Outline}
\begin{prop}
\label{prop:remainderestimate} For any $\phi\in C^{1}(\overline{\Omega})$,
there holds 
\begin{align}
 & \int_{\Omega}\left(\Big(\gamma_{n}-\gamma_{0}\Big)\nabla w_{n}\cdot\nabla x_{i}\right)\phi\,\text{d}x\label{eq:remainder}\\
= & \int_{\Omega}\left(\Big(\gamma_{n}-\gamma_{0}\Big)\nabla w_{n}^{i}\cdot\nabla u_{0}\right)\phi\,\text{d}x+\int_{\Omega}r_{n}\cdot\nabla\phi\,\text{d}x\nonumber 
\end{align}
with $r_{n}\in L^{1}(\Omega)$. Furthermore for any $\tau\in\left[1,\frac{2d-1}{2d-2}\right)$,
the following estimate holds
\begin{equation}
\left|\int_{\Omega}r_{n}\cdot\nabla\phi\,\text{d}x\right|\leq C\left\Vert \nabla\phi\right\Vert _{L^{\infty}(\Omega)}\left(\left\Vert \dn\right\Vert _{L^{1}\left(\Omega\right)}^{\tau}\left(\left\Vert \nabla u_{0}\right\Vert _{L^{\infty}(K)}+\left\Vert u_{0}\right\Vert _{L^{\infty}\left(\partial\Omega^{\prime}\right)}\right)+\varepsilon_{n}\right),\label{remainder2}
\end{equation}
The constant $C$ may depends on $\tau$, $\text{\ensuremath{\Omega}, \ensuremath{\Omega^{\prime}}, \ensuremath{K}}$,
$\left\Vert \gamma_{0}\right\Vert _{W^{2,d}\left(\Omega\right)}$,
and the a priori bounds $\Lambda_{0}$ and $\lambda_{0}$ only. The
remainder term $\varepsilon_{n}$ satisfies the following two a priori
estimates
\begin{equation}
\varepsilon_{n}\leq\left\Vert \dn\right\Vert _{L^{1}\left(\Omega\right)}\left(\left\Vert w_{n}\right\Vert _{L^{\infty}\left(\qn\right)}+\left\Vert w_{n}^{i}\right\Vert _{L^{\infty}\left(\qn\right)}\left\Vert \nabla u_{0}\right\Vert _{L^{\infty}(K)}\right)\label{eq:epsilon1linf}
\end{equation}
 and, for $p>d,$
\begin{equation}
\varepsilon_{n}\leq\left\Vert \dn\right\Vert _{L^{1}\left(\Omega\right)}^{1+\eta}\left\Vert \dn\right\Vert _{L^{p}\left(\qn\right)}^{\frac{1}{2}}\left(\left\Vert \nabla u_{0}\right\Vert _{L^{\infty}(K)}+\left\Vert u_{0}\right\Vert _{L^{\infty}\left(\partial\Omega^{\prime}\right)}\right).\label{eq:epsiloonlp}
\end{equation}
 where $\eta>0$ depends only on $p.$
\end{prop}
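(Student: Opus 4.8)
The plan is to produce an explicit expression for the vector field $r_n$ by testing the two variational problems against one another, and then to estimate $\|r_n\|_{L^{1}}$ on the set where $\gamma_n$ stays bounded (this yields the $\|\dn\|_{L^{1}(\Omega)}^{\tau}$ term) separately from the conductive set $\qn$ (this is $\varepsilon_n$).

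\medskip\noindent\textbf{Step 1: the formula for $r_n$.} Insert $\phi w_n^{i}\in H_0^1(\Omega)$ as test function in \eqref{defwn0} (admissible since $\phi\in C^1(\overline{\Omega})$ and $w_n\in H_0^1(\Omega)$) and $\phi w_n\in H_0^1(\Omega)$ in \eqref{defwn0-1-1}, expand $\nabla(\phi w_n^i)$, $\nabla(\phi w_n)$ by Leibniz, and subtract. The two occurrences of $\int_\Omega\phi\,\gamma_n\nabla w_n\cdot\nabla w_n^i\,\mathrm dx$ cancel by symmetry of $\gamma_n$, leaving exactly \eqref{remainder} with
\[
 r_n=w_n^{i}\big(\gamma_n\nabla w_n+(\gamma_n-\gamma_0)\nabla u_0\big)-w_n\big(\gamma_n\nabla w_n^{i}+(\gamma_n-\gamma_0)\nabla x_i\big).
\]
That $r_n\in L^{1}(\Omega)$ follows since $\gamma_n\nabla w_n\in L^{1}(K)$ — write $\gamma_n\nabla w_n=\gamma_n^{1/2}(\gamma_n^{1/2}\nabla w_n)$ and use $\|\gamma_n^{1/2}\|_{L^2(\qn\cup\rn)}^{2}\le\sqrt d\,\|\dn\|_{L^1(\Omega)}$ from \eqref{notlowalpha} together with \eqref{energyest} — and since $\gamma_n=\gamma_0$ on $\Omega\setminus K$ where $\nabla w_n,\nabla w_n^i\in L^2$ and $w_n,w_n^i\in L^2$; the $(\gamma_n-\gamma_0)\nabla u_0$, $(\gamma_n-\gamma_0)\nabla x_i$ terms are supported in $\qn\cup\rn$ and bounded by $|\dn|_F$ times an $L^\infty$ field.

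\medskip\noindent\textbf{Step 2: the region $\Omega\setminus\qn$.} Here $\gamma_n\le\gamma_0$ a.e.\ (by \enuref{notlow}, with equality off $\rn$), so $|\gamma_n^{1/2}|_F^{2}\le\sqrt d\,|\gamma_n|_F\le d\Lambda_0$ and hence $\|\gamma_n\nabla w_n\|_{L^2(\Omega\setminus\qn)}\le\sqrt{d\Lambda_0}\,E(w_n)^{1/2}\lesssim\|\dn\|_{L^1(\Omega)}^{1/2}\|\nabla u_0\|_{L^\infty(K)}$ by \eqref{energyest}; moreover $(\gamma_n-\gamma_0)\nabla u_0$ is supported in $\rn$, where $|\gamma_n-\gamma_0|_F\le|\gamma_0|_F\le\Lambda_0\sqrt d$ and $|\rn|\le(\sqrt d\,\lambda_0)^{-1}\|\dn\|_{L^1(\Omega)}$ (because $|\dn|_F\ge|\gamma_0|_F\ge\sqrt d\,\lambda_0$ on $\rn$), so $\|(\gamma_n-\gamma_0)\nabla u_0\|_{L^2(\Omega\setminus\qn)}\lesssim\|\dn\|_{L^1(\Omega)}^{1/2}\|\nabla u_0\|_{L^\infty(K)}$; the corrector terms are handled identically (recall the $w_n^{i}$ of \defref{cndn-1} solve \eqref{defwn0} with $u_0$ replaced by $x_i$, hence \propref{EnergyEstimateGal}, \lemref{L2bound} and \eqref{Lsbound}, \eqref{lpboundflux} apply to them with $\|\nabla u_0\|_{L^\infty(K)}$, $\|u_0\|_{L^\infty(\partial\Omega')}$ replaced by constants, in particular $\|w_n^{i}\|_{L^q(\Omega)}\lesssim\|\dn\|_{L^1(\Omega)}^{\tilde\tau/q}$ for $\tilde\tau\in[1,\tfrac{d}{d-1})$). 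A Cauchy--Schwarz against these $L^2$ bounds for $w_n,w_n^{i}$ gives $\|r_n\|_{L^1(\Omega\setminus\qn)}\lesssim\|\dn\|_{L^1(\Omega)}^{(\tilde\tau+1)/2}\big(\|\nabla u_0\|_{L^\infty(K)}+\|u_0\|_{L^\infty(\partial\Omega')}\big)$; setting $\tau=(\tilde\tau+1)/2$ covers $\tau\in[1,\tfrac{2d-1}{2d-2})$ and yields the bound $C\|\nabla\phi\|_{L^\infty(\Omega)}\|\dn\|_{L^1(\Omega)}^{\tau}(\cdots)$ for $\big|\int_{\Omega\setminus\qn}r_n\cdot\nabla\phi\,\mathrm dx\big|$.

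\medskip\noindent\textbf{Step 3: the region $\qn$ and the two bounds for $\varepsilon_n$.} Put $\varepsilon_n:=\|r_n\|_{L^1(\qn)}$, so $\big|\int_{\qn}r_n\cdot\nabla\phi\big|\le\|\nabla\phi\|_{L^\infty}\varepsilon_n$. For \eqref{epsilon1linf}, factor out $\|w_n\|_{L^\infty(\qn)}$, $\|w_n^{i}\|_{L^\infty(\qn)}$ and bound $\|\gamma_n\nabla w_n\|_{L^1(\qn)}$, $\|(\gamma_n-\gamma_0)\nabla u_0\|_{L^1(\qn)}\lesssim\|\dn\|_{L^1(\Omega)}\|\nabla u_0\|_{L^\infty(K)}$ (via $\|\gamma_n^{1/2}\|_{L^2(\qn)}^2\le\sqrt d\,\|\dn\|_{L^1(\Omega)}$, \eqref{energyest}, \eqref{notlowalpha}), and likewise the corrector fluxes $\lesssim\|\dn\|_{L^1(\Omega)}$. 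For \eqref{epsiloonlp}, use instead the flux estimate \eqref{lpboundflux} and its analogues (for $w_n$, $u_0$; for $w_n^{i}$, $x_i$; and, after writing $(\gamma_n-\gamma_0)\nabla u_0=\gamma_n\nabla u_0-\gamma_0\nabla u_0$, for the field $u_0$ itself, noting $\int_{\qn}\gamma_n\nabla u_0\cdot\nabla u_0\le\|\dn\|_{L^1(\qn)}\|\nabla u_0\|_{L^\infty(K)}^2$ and $|\qn|\lesssim\|\dn\|_{L^1(\qn)}$), all of the form $\lesssim\|\dn\|_{L^1(\Omega)}^{1/2}\|\dn\|_{L^p(\qn)}^{1/2}\times(\text{field})$; pairing these $L^{2p/(p+1)}(\qn)$ bounds with $\|w_n\|_{L^q(\Omega)},\|w_n^{i}\|_{L^q(\Omega)}\lesssim\|\dn\|_{L^1(\Omega)}^{\tilde\tau/q}$ for the conjugate exponent $q=\tfrac{2p}{p-1}\in(2,\tfrac{2d}{d-1})$ produces a power $\tfrac{\tilde\tau}{q}+\tfrac12$ of $\|\dn\|_{L^1(\Omega)}$, which exceeds $1$ precisely when $p>d$ (on choosing $\tilde\tau$ close enough to $\tfrac{d}{d-1}$), giving the exponent $1+\eta$.

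\medskip\noindent\textbf{Main obstacle.} The delicate point is the exponent bookkeeping in Step 3: one must see that the contribution of $\qn$ can be pushed strictly below $\|\dn\|_{L^1(\Omega)}$ exactly under $p>d$, and, in particular, that the terms $(\gamma_n-\gamma_0)\nabla u_0$ and $(\gamma_n-\gamma_0)\nabla x_i$ on $\qn$ — only $O(\|\dn\|_{L^1(\Omega)})$ if estimated crudely by $L^\infty$ — must be recast as $\gamma_n\nabla u_0-\gamma_0\nabla u_0$, resp.\ $\gamma_n\nabla x_i-\gamma_0\nabla x_i$, and then controlled by the very same flux-type $L^{2p/(p+1)}$ estimate as $\gamma_n\nabla w_n$, so that they also pick up the gain $\|\dn\|_{L^1(\Omega)}^{1/2}\|\dn\|_{L^p(\qn)}^{1/2}$. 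Everything else is Cauchy--Schwarz, Hölder, and the a priori bounds of \secref{APE-gen} transplanted to the correctors $w_n^{i}$.
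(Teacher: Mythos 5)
Your proposal is correct and follows essentially the same route as the paper: the same remainder field $r_{n}=w_{n}^{i}\big(\gamma_{n}\nabla w_{n}+(\gamma_{n}-\gamma_{0})\nabla u_{0}\big)-w_{n}\big(\gamma_{n}\nabla w_{n}^{i}+(\gamma_{n}-\gamma_{0})\nabla x_{i}\big)$ obtained by cross-testing \eqref{defwn0} and \eqref{defwn0-1-1}, the same split into the region where $\gamma_{n}\leq\gamma_{0}$ (handled with the weighted energy \eqref{energyest} and the $L^{2}$ bound \eqref{improveL2general}, giving $\tau=\frac{1+\tilde\tau}{2}\in[1,\frac{2d-1}{2d-2})$) and the set $\qn$, and the same two bounds for $\varepsilon_{n}$ via \eqref{lpboundflux} paired with \eqref{Lsbound} at the conjugate exponent, with the condition $p>d$ arising from the identical exponent count. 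The only deviation is cosmetic: on $\qn$ the paper estimates the $(\gamma_{n}-\gamma_{0})\nabla u_{0}$ and $(\gamma_{n}-\gamma_{0})\nabla x_{i}$ terms directly through $|\gamma_{n}-\gamma_{0}|_{F}\leq|\dn|_{F}=|\dn^{\frac12}|\,|\dn^{\frac12}|$ and H\"older, whereas you split off $\gamma_{0}\nabla u_{0}$ and use $|\qn|\lesssim\Vert\dn\Vert_{L^{1}(\qn)}$, which yields the same form of bound.
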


\begin{rem*}
Note that estimates \eqref{epsilon1linf} and \eqref{epsiloonlp}
imply that $\epsilon_{n}\leq0$ when $\qn=\emptyset.$
\end{rem*}
\begin{proof}
We write $Z$ as a shorthand for $\left\Vert \nabla u_{0}\right\Vert _{L^{\infty}(K)}+\left\Vert u_{0}\right\Vert _{L^{\infty}\left(\partial\Omega^{\prime}\right)}$.
A computation shows that 
\[
\int_{\Omega}\left(\left(\gamma_{n}-\gamma_{0}\right)\nabla w_{n}\cdot\nabla x_{i}\right)\phi\,\text{d}x=\int_{\Omega}\left(\left(\gamma_{0}-\gamma_{n}\right)\nabla w_{n}^{i}\cdot\nabla u_{0}\right)\,\phi\,\text{d}x+\int_{\Omega}r_{n}\cdot\nabla\phi\,\text{d}x
\]
where the remainder term $r_{n}\in L^{1}(\Omega)$ is
\begin{align*}
r_{n} & =\left(\gamma_{n}-\gamma_{0}\right)\left(w_{n}^{i}\nabla u_{0}-w_{n}\nabla x_{i}\right)+w_{n}^{i}\gamma_{n}\nabla w_{n}-w_{n}\gamma_{n}\nabla w_{n}^{i}.
\end{align*}
Now, write $T_{1}=\mathbf{1}_{\gamma_{n}\leq\gamma_{0}}\left(r_{n}\cdot\nabla\phi\right)$
and $T_{2}=r_{n}\cdot\nabla\phi-T_{1}$.

\begin{align*}
\left\Vert T_{1}\right\Vert _{L^{1}\left(\Omega\right)} & \leq\int_{\Omega\cap\left\{ \gamma_{n}\leq\gamma_{0}\right\} }\left|w_{n}\left(\gamma_{n}\nabla w_{n}^{i}\right)\cdot\nabla\phi\right|\,\text{d}x+\int_{\Omega\cap\left\{ \gamma_{n}\leq\gamma_{0}\right\} }\left|w_{n}^{i}\left(\gamma_{n}\nabla w_{n}\right)\cdot\nabla\phi\right|\,\text{d}x\\
 & +\int_{\Omega\cap\left\{ \gamma_{n}\leq\gamma_{0}\right\} }\left|w_{n}^{i}\left(\gamma_{n}-\gamma_{0}\right)\nabla u_{0}\cdot\nabla\phi\right|\,\text{d}x+\int_{\Omega\cap\left\{ \gamma_{n}\leq\gamma_{0}\right\} }\left|w_{n}\left(\gamma_{n}-\gamma_{0}\right)\nabla x_{i}\cdot\nabla\phi\right|\,\text{d}x\\
 & \leq\left\Vert \nabla\phi\right\Vert _{L^{\infty}(\Omega)}\left\Vert \gamma_{0}\right\Vert _{L^{\infty}\left(\Omega\right)}^{\frac{1}{2}}\left(\left\Vert w_{n}\right\Vert _{L^{2}\left(\Omega\right)}E\left(w_{n}^{i}\right)^{\frac{1}{2}}+\left\Vert w_{n}^{i}\right\Vert _{L^{2}\left(\Omega\right)}E\left(w_{n}\right)^{\frac{1}{2}}\right.\\
 & +\left.\left\Vert w_{n}^{i}\right\Vert _{L^{2}\left(\Omega\right)}\left\Vert \dn\right\Vert _{L^{1}\left(\Omega\right)}^{\frac{1}{2}}\left\Vert \nabla u_{0}\right\Vert _{L^{\infty}\left(K\right)}+\left\Vert w_{n}\right\Vert _{L^{2}\left(\Omega\right)}\left\Vert \dn\right\Vert _{L^{1}\left(\Omega\right)}^{\frac{1}{2}}\right)
\end{align*}
Thanks to estimate \eqref{energyest} and \eqref{improveL2general}
(applied to $u_{0}=x_{i}$ for the corrector terms $w_{n}^{i}$) we
find 
\[
\left\Vert T_{1}\right\Vert _{L^{1}\left(\Omega\right)}\lesssim\left\Vert \dn\right\Vert _{L^{1}\left(\Omega\right)}^{\frac{1}{2}+\frac{1}{2}\tau^{\prime}}\left\Vert \nabla\phi\right\Vert _{L^{\infty}(\Omega)}Z,
\]
with $\tau^{\prime}\in\left[1,\frac{d}{d-1}\right)$, so that $\tau=\frac{1+\tau^{\prime}}{2}\in\left[1,\frac{2d-1}{2d-2}\right).$
We now turn to the other term. The triangle inequality gives
\begin{align}
\left\Vert T_{2}\right\Vert _{L^{1}\left(\Omega\right)} & \leq\int_{\qn}\left|w_{n}\left(\gamma_{n}\nabla w_{n}^{i}\right)\cdot\nabla\phi\right|\,\text{d}x+\int_{\qn}\left|w_{n}^{i}\left(\gamma_{n}\nabla w_{n}\right)\cdot\nabla\phi\right|\,\text{d}x\label{eq:bound1}\\
 & +\int_{\qn}\left|w_{n}^{i}\left(\gamma_{n}-\gamma_{0}\right)\nabla u_{0}\cdot\nabla\phi\right|\,\text{d}x+\int_{\qn}\left|w_{n}\left(\gamma_{n}-\gamma_{0}\right)\nabla x_{i}\cdot\nabla\phi\right|\,\text{d}x.\nonumber 
\end{align}
Recall that thanks to \eqref{notlowalpha}, $\left|\gamma_{n}-\gamma_{0}\right|_{F}<\left|\dn\right|_{F}$.
Thus using \eqref{lpboundflux} with $p=1$, and \eqref{notlowalpha},
we deduce from \eqref{bound1} that 
\[
\left\Vert T_{2}\right\Vert _{L^{1}\left(\Omega\right)}\lesssim\left\Vert \dn\right\Vert _{L^{1}\left(\Omega\right)}\left(\left\Vert w_{n}\right\Vert _{L^{\infty}\left(\qn\right)}+\left\Vert w_{n}^{i}\right\Vert _{L^{\infty}\left(\qn\right)}\left\Vert \nabla u_{0}\right\Vert _{L^{\infty}(K)}\right)\left\Vert \nabla\phi\right\Vert _{L^{\infty}(K)},
\]
which corresponds to estimate \eqref{epsilon1linf}.

Alternatively, applying Hölder's inequality, then the $L^{p}$ bound
\eqref{lpboundflux} and the $L^{q}$ bound \eqref{Lsbound} with
the conjugate exponent, we find for any $p\geq1$, and any $\theta\in\left[1,\frac{d}{d-1}\right),$
\begin{align*}
 & \int_{\qn}\left|w_{n}\gamma_{n}\nabla w_{n}^{i}\cdot\nabla\phi\right|\,\text{d}x\\
\leq & \left\Vert \gamma_{n}\nabla w_{n}^{i}\right\Vert _{L^{\frac{2p}{p+1}}(\qn)}\left\Vert w_{n}\right\Vert _{L^{\frac{2p}{p-1}}(\qn)}\left\Vert \nabla\phi\right\Vert _{L^{\infty}(K)}\\
\lesssim & \left\Vert \dn\right\Vert _{L^{1}\left(\Omega\right)}^{\frac{1}{2}}\left\Vert \dn\right\Vert _{L^{p}\left(\qn\right)}^{\frac{1}{2}}\left\Vert \dn\right\Vert _{L^{1}(\Omega)}^{\left(\frac{1}{2}-\frac{1}{2p}\right)\theta}Z\left\Vert \nabla\phi\right\Vert _{L^{\infty}(K)}.
\end{align*}
Similarly
\[
\int_{\qn}\left|w_{n}^{i}\gamma_{n}\nabla w_{n}\cdot\nabla\phi\right|\,\text{d}x\lesssim\left\Vert \dn\right\Vert _{L^{1}\left(\Omega\right)}^{\frac{1}{2}}\left\Vert \dn\right\Vert _{L^{p}\left(\qn\right)}^{\frac{1}{2}}\left\Vert \dn\right\Vert _{L^{1}(\Omega)}^{\left(\frac{1}{2}-\frac{1}{2p}\right)\theta}Z\left\Vert \nabla\phi\right\Vert _{L^{\infty}(K)}.
\]
 Using \eqref{notlowalpha}, Hölder's inequality and the $L^{s}$
bounds \eqref{Lsbound}, we write 
\begin{align*}
\int_{\qn}\left|w_{n}^{i}\left(\gamma_{n}-\gamma_{0}\right)\nabla u_{0}\cdot\nabla\phi\right| & \,\text{d}x\leq\left\Vert \dn^{\frac{1}{2}}\right\Vert _{L^{2}\left(\qn\right)}\left\Vert \dn^{\frac{1}{2}}w_{n}^{i}\right\Vert _{L^{2}\left(\qn\right)}\left\Vert \nabla u_{0}\right\Vert _{L^{\infty}(K)}\left\Vert \nabla\phi\right\Vert _{L^{\infty}(K)}\\
 & \lesssim\left\Vert \dn\right\Vert _{L^{1}\left(\qn\right)}^{\frac{1}{2}}\left\Vert \dn\right\Vert _{L^{p}\left(\qn\right)}^{\frac{1}{2}}\left\Vert w_{n}^{i}\right\Vert _{L^{\frac{2p}{p-1}}\left(\qn\right)}\left\Vert \nabla u_{0}\right\Vert _{L^{\infty}(K)}\left\Vert \nabla\phi\right\Vert _{L^{\infty}(K)}\\
 & \lesssim\left\Vert \dn\right\Vert _{L^{1}\left(\Omega\right)}^{\frac{1}{2}}\left\Vert \dn\right\Vert _{L^{p}\left(\qn\right)}^{\frac{1}{2}}\left\Vert \dn\right\Vert _{L^{1}(\Omega)}^{\left(\frac{1}{2}-\frac{1}{2p}\right)\theta}Z\left\Vert \nabla\phi\right\Vert _{L^{\infty}(K)},
\end{align*}
and by the same argument, 
\[
\int_{\qn}\left|w_{n}\left(\gamma_{n}-\gamma_{0}\right)\nabla x_{i}\cdot\nabla\phi\right|\,\text{d}x\lesssim\left\Vert \dn\right\Vert _{L^{1}\left(\Omega\right)}^{\frac{1}{2}}\left\Vert \dn\right\Vert _{L^{p}\left(\qn\right)}^{\frac{1}{2}}\left\Vert \dn\right\Vert _{L^{1}(\Omega)}^{\left(\frac{1}{2}-\frac{1}{2p}\right)\theta}Z\left\Vert \nabla\phi\right\Vert _{L^{\infty}(K)}.
\]
Altogether, for any $p\geq1$, and any $\theta\in\left[1,\frac{d}{d-1}\right),$
\[
\left\Vert T_{2}\right\Vert _{L^{1}\left(\Omega\right)}\lesssim\left\Vert \dn\right\Vert _{L^{1}\left(\Omega\right)}^{\frac{1}{2}}\left\Vert \dn\right\Vert _{L^{p}\left(\qn\right)}^{\frac{1}{2}}\left\Vert \dn\right\Vert _{L^{1}(\Omega)}^{\left(\frac{1}{2}-\frac{1}{2p}\right)\theta}Z\left\Vert \nabla\phi\right\Vert _{L^{\infty}(K)}.
\]
 For any $p>d$, pick $\theta=\frac{1}{2}\left(\frac{p}{p-1}+\frac{d}{d-1}\right)$,
then 
\[
\eta=\frac{1}{2}\left(\frac{d}{d-1}\frac{p-1}{p}-1\right)>0,
\]
and 
\[
\left\Vert T_{2}\right\Vert _{L^{1}\left(\Omega\right)}\leq\left\Vert \dn\right\Vert _{L^{1}\left(\Omega\right)}^{1+\eta}\left\Vert \dn\right\Vert _{L^{p}\left(\qn\right)}^{\frac{1}{2}}Z\left\Vert \nabla\phi\right\Vert _{L^{\infty}(K)},
\]
which concludes the proof of estimate \eqref{epsiloonlp}.
\end{proof}
\begin{prop}
\label{prop:insulatingpolar} Suppose assumptions \ref{enu:well-within},
\ref{enu:pertubative}, and \ref{enu:notlow} are satisfied. Additionally
assume that either $\qn=\emptyset$, or \enuref{mixed} holds. Given
$\Omega^{\prime}$ a smooth domain as defined in \propref{Polar-Outline},
there holds 
\[
\int_{\Omega}\left(\Big(\gamma_{n}-\gamma_{0}\Big)\nabla w_{n}\cdot\nabla x_{i}\right)\phi\,\text{d}x=\int_{\Omega}\left(\Big(\gamma_{n}-\gamma_{0}\Big)\nabla w_{n}^{i}\cdot\nabla u_{0}\right)\phi\,\text{d}x+\int_{\Omega}r_{n}\cdot\nabla\phi\,\text{d}x
\]
with 
\[
\left\Vert r_{n}\right\Vert _{L^{1}\left(\Omega\right)}\leq C\left\Vert \dn\right\Vert _{L^{1}\left(\Omega\right)}^{1+\eta}\left(\left\Vert \nabla u_{0}\right\Vert _{L^{\infty}(K)}+\left\Vert u_{0}\right\Vert _{L^{\infty}\left(\partial\Omega^{\prime}\right)}\right),
\]
where the positive constants $C$ and $\eta$ may depend only on $\tau$,
$\Omega$, $\ensuremath{K}$, $\left\Vert \gamma_{0}\right\Vert _{W^{2,d}\left(\Omega\right)}$,
$\Lambda_{0}$ and $\lambda_{0}$ and $\left\Vert \dn\right\Vert _{L^{p}\left(\qn\right)}$.
\end{prop}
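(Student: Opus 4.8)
The plan is to read this statement off from \propref{remainderestimate}, whose asymptotic identity is precisely the one claimed here, and to control the remainder $\varepsilon_n$ produced there under the extra hypothesis on $\qn$. The first observation is that the field $r_n$ furnished by \propref{remainderestimate} is given by an explicit expression in $w_n$, the correctors $w_n^i$, $\nabla u_0$, $\nabla x_i$, $\gamma_n$ and $\gamma_0$ that does not involve the test function $\phi$; since in the proof of \propref{remainderestimate} every contribution was estimated by Cauchy--Schwarz or Hölder after factoring out $\|\nabla\phi\|_{L^\infty(\Omega)}$, that proof in fact delivers the splitting $r_n = r_n\mathbf{1}_{\{\gamma_n\le\gamma_0\}}+r_n\mathbf{1}_{\qn}$ (the second summand being supported where $\gamma_n\not\le\gamma_0$, hence inside $\qn$ by \enuref{notlow}) together with the bounds
\[
\|r_n\mathbf{1}_{\{\gamma_n\le\gamma_0\}}\|_{L^1(\Omega)}\lesssim\|\dn\|_{L^1(\Omega)}^{\tau}\,Z,\qquad \|r_n\mathbf{1}_{\qn}\|_{L^1(\Omega)}\lesssim\varepsilon_n,
\]
valid for every $\tau\in\bigl[1,\tfrac{2d-1}{2d-2}\bigr)$, where $Z=\|\nabla u_0\|_{L^\infty(K)}+\|u_0\|_{L^\infty(\partial\Omega')}$ and $\varepsilon_n$ obeys the a priori estimates \eqref{epsilon1linf} and \eqref{epsiloonlp}.

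For the first summand I would exploit that $\tfrac{2d-1}{2d-2}=1+\tfrac{1}{2(d-1)}>1$, fixing once and for all $\tau=1+\eta_1$ with $0<\eta_1<\tfrac{1}{2(d-1)}$, so that $\|r_n\mathbf{1}_{\{\gamma_n\le\gamma_0\}}\|_{L^1(\Omega)}\lesssim\|\dn\|_{L^1(\Omega)}^{1+\eta_1}Z$. For the second summand there are two cases. If $\qn=\emptyset$, then $r_n\mathbf{1}_{\qn}\equiv0$ and the proof is complete with $\eta=\eta_1$ --- in agreement with the fact that \enuref{extra} is then vacuous. Otherwise \enuref{mixed} supplies an exponent $p>d$ with $\Lambda:=\limsup_{n}\|\dn\|_{L^p(\qn)}<\infty$, and estimate \eqref{epsiloonlp} reads $\varepsilon_n\le\|\dn\|_{L^1(\Omega)}^{1+\eta_2}\|\dn\|_{L^p(\qn)}^{1/2}\,Z$ with $\eta_2=\tfrac12\bigl(\tfrac{d}{d-1}\cdot\tfrac{p-1}{p}-1\bigr)$, which is strictly positive precisely because $p>d$; bounding $\|\dn\|_{L^p(\qn)}^{1/2}$ by a constant depending on $\Lambda$ yields $\|r_n\mathbf{1}_{\qn}\|_{L^1(\Omega)}\lesssim\|\dn\|_{L^1(\Omega)}^{1+\eta_2}Z$.

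To conclude I would set $\eta:=\min(\eta_1,\eta_2)$ (respectively $\eta:=\eta_1$ if $\qn=\emptyset$) and use \enuref{pertubative}, which gives $\|\dn\|_{L^1(\Omega)}\le1$ and hence $\|\dn\|_{L^1(\Omega)}^{1+\eta_1}+\|\dn\|_{L^1(\Omega)}^{1+\eta_2}\le2\|\dn\|_{L^1(\Omega)}^{1+\eta}$, so that $\|r_n\|_{L^1(\Omega)}\le C\|\dn\|_{L^1(\Omega)}^{1+\eta}Z$. The constant $C$ inherits from \propref{remainderestimate} its dependence on $\tau$, $\Omega$, $\Omega'$, $K$, $\|\gamma_0\|_{W^{2,d}(\Omega)}$, $\Lambda_0$, $\lambda_0$, and picks up in addition a dependence on $\Lambda=\limsup_n\|\dn\|_{L^p(\qn)}$, while $\eta$ depends only on $d$ and $p$; this matches the dependencies stated in the proposition.

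There is no deep obstacle: \propref{remainderestimate} already carries out all the analysis, and what remains is bookkeeping. The two points requiring attention are, first, the passage from the bound on $\int_\Omega r_n\cdot\nabla\phi$ given in \propref{remainderestimate} to a bound on $\|r_n\|_{L^1(\Omega)}$ itself --- legitimate exactly because $r_n$ is independent of $\phi$ and each step of that proof factors $\|\nabla\phi\|_{L^\infty(\Omega)}$ out cleanly, so one may re-run it estimating $\|r_n\|_{L^1(\Omega)}$ directly --- and, second, verifying that both exponents genuinely exceed $1$, which hinges on the strict inequalities $\tau<\tfrac{2d-1}{2d-2}$ and, in \enuref{mixed}, $p>d$. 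One should also note that only the $L^p$-type a priori estimate \eqref{epsiloonlp} for $\varepsilon_n$ is used here; the cruder $L^\infty$ estimate \eqref{epsilon1linf} is not needed, and it is \eqref{epsiloonlp} that converts control of the $L^p$ mass of $\dn$ on the conductive inclusions into the algebraic gain $\|\dn\|_{L^1(\Omega)}^{1+\eta_2}$.
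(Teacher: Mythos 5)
Your proposal is correct and follows exactly the paper's route: the paper proves this proposition as an immediate consequence of \propref{remainderestimate}, using estimate \eqref{eq:epsiloonlp} under \enuref{mixed} (or the vanishing of $\varepsilon_{n}$ when $\qn=\emptyset$) together with the choice $\tau=1+\eta_{1}<\frac{2d-1}{2d-2}$ and $\left\Vert \dn\right\Vert _{L^{1}(\Omega)}\leq1$. Your additional remark that the proof of \propref{remainderestimate} in fact bounds $\left\Vert r_{n}\right\Vert _{L^{1}(\Omega)}$ itself, since every step factors out $\left\Vert \nabla\phi\right\Vert _{L^{\infty}(\Omega)}$, is precisely the bookkeeping the paper leaves implicit.
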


\begin{proof}
This is an immediate consequence of \propref{remainderestimate}.
\end{proof}

\subsection{The high conductivity inclusion case when $d=2$}

This section addresses the case when \enuref{2d} holds. When $d=2$,
as it is well known, there is a direct relation between high and low
conductivity problem, by means of stream functions (see e.g. \citep{milton-02}).
We use this indirect method to obtain the polarisability result under
\enuref{2d}. We remind the reader of the following classical result.
\begin{lem}[{\citep[Lemma I.1][]{BBH94}}]
 \label{lem:BBH} Let $\Omega$ be any smooth open set in $\mathbb{R}^{2}$,
not necessarily simply connected, and $D$ be a vector field such
that 
\[
\divx\left(D\right)=0\text{ on }\Omega,\text{and }\int_{\text{\ensuremath{\Gamma_{i}}}}D\cdot n\text{d}\sigma=0
\]
 on each connected component $\Gamma_{i}$ of $\partial\Omega$. Then,
there exists a function $H$ such that 
\[
D=\left(-\partial_{x_{2}}H,\partial_{x_{1}}H\right)\text{on }\Omega.
\]
\end{lem}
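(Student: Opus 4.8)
The plan is to recast the desired identity as the exactness of a closed $1$-form. Writing $D=(D_1,D_2)$, the relation $D=(-\partial_{x_2}H,\partial_{x_1}H)$ is exactly $dH=\omega$, where $\omega:=D_2\,dx_1-D_1\,dx_2$. First I would record that $\omega$ is closed: $d\omega=-(\partial_{x_1}D_1+\partial_{x_2}D_2)\,dx_1\wedge dx_2=-(\divx D)\,dx_1\wedge dx_2=0$ on $\Omega$ (this holds distributionally when $D$ is merely $L^2$ with $\divx D=0$). Since connected components of $\Omega$ and of $\partial\Omega$ can be treated independently, I assume $\Omega$ connected, fix a basepoint $p\in\Omega$, and define $H(x)=\int_\sigma\omega$ over any path $\sigma$ in $\Omega$ from $p$ to $x$. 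By construction $dH=\omega$ wherever $H$ is well defined, so everything reduces to showing $\oint_\gamma\omega=0$ for every closed loop $\gamma\subset\Omega$; the resulting $H$ then inherits one order of differentiability from $D$.

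The vanishing of the periods is where the flux hypothesis and the planar topology come in. Given a piecewise-$C^1$ loop $\gamma\subset\Omega$, it is a standard fact (planar Alexander/Poincar\'e--Lefschetz duality, or an elementary winding-number argument using that $\mathbb{R}^2\setminus\Omega$ is closed) that $\gamma$ is homologous in $\Omega$ to an integer combination $\sum_j m_j\,\Gamma_j$ of the boundary components: indeed the winding number of $\gamma-\sum_j m_j\,\Gamma_j$ about every point of $\mathbb{R}^2\setminus\Omega$ can be made to vanish, and a planar cycle with all such winding numbers zero bounds a $2$-chain inside $\Omega$, on which $\int d\omega=0$ by Stokes. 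Hence $\oint_\gamma\omega=\sum_j m_j\oint_{\Gamma_j}\omega$ and it suffices to evaluate the period over one boundary component. Parametrising $\Gamma_j$ by arclength with unit tangent $t$, and choosing the outward unit normal $n=(t_2,-t_1)$, the pullback of $\omega$ to $\Gamma_j$ is $(D_2t_1-D_1t_2)\,d\sigma=-(D\cdot n)\,d\sigma$, so $\oint_{\Gamma_j}\omega=-\int_{\Gamma_j}D\cdot n\,d\sigma=0$ by assumption. This produces the function $H$. (Consistently, the divergence theorem gives $\sum_j\int_{\Gamma_j}D\cdot n\,d\sigma=\int_\Omega\divx D=0$, so one of the flux conditions on a connected $\Omega$ is redundant but harmless.)

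I expect the only real obstacle to be the topological bookkeeping just sketched --- producing a homology basis of $\Omega\subset\mathbb{R}^2$ from the boundary components and matching periods to boundary fluxes with the correct orientations --- the rest being the planar Poincar\'e lemma and Stokes' theorem. A more hands-on alternative avoids cohomology: cut $\Omega$ along finitely many disjoint smooth arcs joining boundary components to one another so that the result $\Omega^\sharp$ is simply connected, apply the Poincar\'e lemma on $\Omega^\sharp$, and observe that the jump of the resulting potential across each cut equals a flux $\int_{\Gamma_j}D\cdot n\,d\sigma$, which vanishes, so the potential descends to a single-valued $H$ on $\Omega$. Finally, if $D$ is only assumed in $L^2(\Omega;\mathbb{R}^2)$ with $\divx D=0$ distributionally, one mollifies to get divergence-free smooth $D_\varepsilon$ on slightly shrunken subdomains, builds $H_\varepsilon$ as above, normalises it to zero mean, and passes to the limit via $\nabla^\perp H_\varepsilon=D_\varepsilon\to D$ in $L^2_{\mathrm{loc}}$ and the Poincar\'e inequality; the flux conditions are preserved because $\divx D=0$ gives $D$ a well-defined normal trace in $H^{-1/2}(\partial\Omega)$.
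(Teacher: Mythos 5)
The paper does not prove this lemma at all---it is quoted as a classical result from \citep[Lemma I.1]{BBH94}---and the standard proof behind that citation is exactly the one you give: the $1$-form $D_{2}\,dx_{1}-D_{1}\,dx_{2}$ is closed because $\divx D=0$, every loop in a planar domain is homologous to an integer combination of boundary components, and the corresponding periods are precisely the boundary fluxes, which vanish by hypothesis. Your argument is correct, including the orientation bookkeeping and the closing remark on the $L^{2}$/normal-trace setting that is the one actually needed when the lemma is applied to $\gamma_{n}\nabla\left(u_{n}-Fb_{n}\right)$, so nothing further is required.
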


Let $\text{\ensuremath{\left(\Gamma_{i}\right)_{1\leq i\leq N}}}$
the connected components of $\partial\Omega$ and let $Fb_{n}$ and
$Fb_{0}$ the unique solutions of 
\begin{equation}
\begin{cases}
\divx\left(\gamma_{n}\nabla Fb_{n}\right)=0 & \text{ on }\Omega^{\prime},\\
\gamma_{n}\nabla Fb_{n}\cdot n=\frac{1}{\left|\Gamma_{i}\right|}\int_{\Gamma_{i}}\gamma_{n}\nabla u_{n}\cdot n\text{d}\sigma & \text{ on each \ensuremath{\Gamma_{i}.}}\\
\int_{\Omega}Fb_{n}\text{d}x=0.
\end{cases}\label{eq:Fbn}
\end{equation}
and 
\begin{equation}
\begin{cases}
\divx\left(\gamma_{0}\nabla Fb_{0}\right)=0 & \text{ on }\Omega^{\prime},\\
\gamma_{0}\nabla Fb_{0}\cdot n=\frac{1}{\left|\Gamma_{i}\right|}\int_{\Gamma_{i}}\gamma_{0}\nabla u_{0}\cdot n\text{d}\sigma & \text{ on each \ensuremath{\Gamma_{i}.}}\\
\int_{\Omega}Fb_{0}\text{d}x=0.
\end{cases}\label{eq:Fb0}
\end{equation}

Then applying \lemref{BBH} to $\gamma_{n}\nabla\left(u_{n}-Fb_{n}\right)$
and $\gamma_{0}\nabla\left(u_{0}-Fb_{0}\right)$ there exist stream
functions $\psi_{n},\psi_{0}\in H^{1}\left(\Omega^{\prime}\right)$
such that
\begin{equation}
\gamma_{n}\nabla\left(u_{n}-Fb_{n}\right)=J\nabla\psi_{n}\text{ and }\gamma_{0}\nabla\left(u_{0}-Fb_{0}\right)=J\nabla\psi_{0}\text{ a.e. in }\Omega^{\prime}.\label{eq:defpsi0n}
\end{equation}
where $J$ is the antisymmetric matrix $\left(\begin{array}{cc}
0 & -1\\
1 & 0
\end{array}\right)$. As the stream functions may be chosen uniquely up to an additive
constant, we may assume without loss of generality that they satisfy
the constraint 
\[
\int_{\Omega}\psi_{n}\text{d}x=0=\int_{\Omega}\psi_{0}\text{d}x.
\]
Thus, $\psi_{n}$ and $\psi_{0}$ are weak solutions of 
\begin{align*}
-\divx\left(\sigma_{n}\nabla\psi_{n}\right) & =0\text{ }\text{ in }\Omega^{\prime}\\
-\divx\left(\sigma_{0}\nabla\psi_{0}\right) & =0\text{ }\text{ in }\Omega^{\prime}
\end{align*}
where the conductivity matrices $\sigma_{n}$ and $\sigma_{0}$ are
defined as 
\[
\sigma_{n}:=J^{T}\gamma_{n}^{-1}J\quad\text{and}\quad\sigma_{0}:=J^{T}\gamma_{0}^{-1}J.
\]

When then define $\Sigma_{n}$ as $d_{n}$ was with respect to $\gamma_{0}$
and $\gamma_{n},$that is
\begin{defn}
\label{def:defsigma}We set 
\[
\Sigma_{n}=\left(\sigma_{n}+\sigma_{0}\sigma_{n}^{-1}\sigma_{0}\right)1_{A_{n}\cup B_{n}}.
\]
\end{defn}

\begin{prop}
Given $\Omega^{\prime}$ a smooth domain as defined in \propref{Polar-Outline},
given $\psi_{n}$ and $\psi_{0}$ be the stream functions defined
in \eqref{defpsi0n}. The function $\varphi_{n}=\psi_{n}-\psi_{0}$
satisfies
\begin{equation}
-\divx\left(\sigma_{n}\nabla\varphi_{n}\right)=\divx\left(\left(\sigma_{n}-\sigma_{0}\right)\nabla\psi_{0}\right)\text{ in }\mathcal{D}^{\prime}\left(\text{\ensuremath{\Omega^{\prime}}}\right)\label{eq:varphinv}
\end{equation}
and for any $\tau\in\left(0,\frac{1}{2}\right)$ there holds
\begin{equation}
\left\Vert \sigma_{n}\nabla\varphi_{n}\cdot\nu\right\Vert _{H^{-\frac{1}{2}}\left(\partial\Omega^{\prime}\right)}\leq C\left\Vert \dn\right\Vert _{L^{1}(\Omega)}^{\frac{1}{2}+\tau}\left\Vert g\right\Vert _{H^{\frac{1}{2}}\left(\partial\Omega^{\prime}\right)},\label{eq:improveL2general-fluxinv}
\end{equation}
where the constant $C$ may depend only on $\tau$, $\Omega$, $\ensuremath{K}$,
$\left\Vert \gamma_{0}\right\Vert _{W^{2,d}\left(\Omega\right)}$
, $\Lambda_{0}$ and $\lambda_{0}$ .
\end{prop}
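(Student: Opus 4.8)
The plan is the following. The first identity is immediate: $\psi_{n}$ and $\psi_{0}$ solve $-\divx(\sigma_{n}\nabla\psi_{n})=0$ and $-\divx(\sigma_{0}\nabla\psi_{0})=0$ in $\mathcal{D}'(\Omega^{\prime})$ and belong to $H^{1}(\Omega^{\prime})$, while $\sigma_{n},\sigma_{0}\in L^{\infty}(\Omega^{\prime})$, so subtracting gives $-\divx(\sigma_{n}\nabla\varphi_{n})=\divx(\sigma_{n}\nabla\psi_{0})=\divx\big((\sigma_{n}-\sigma_{0})\nabla\psi_{0}\big)+\divx(\sigma_{0}\nabla\psi_{0})=\divx\big((\sigma_{n}-\sigma_{0})\nabla\psi_{0}\big)$ in $\mathcal{D}'(\Omega^{\prime})$. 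I record for later use that $\sigma_{n}-\sigma_{0}=J^{T}(\gamma_{n}^{-1}-\gamma_{0}^{-1})J$ vanishes outside $\qn\cup\rn\subset K$, so that $\gamma_{n}=\gamma_{0}$, $\sigma_{n}=\sigma_{0}$, and $\varphi_{n}$ is $\sigma_{0}$-harmonic in a fixed annular neighbourhood $\mathcal{A}$ of $\partial\Omega^{\prime}$ contained in $\Omega^{\prime}\setminus K$.

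For the flux estimate I would identify $\sigma_{n}\nabla\varphi_{n}\cdot\nu$ on $\partial\Omega^{\prime}$ with a tangential derivative of a quantity already controlled by the a priori estimates of \secref{APE-gen}. Writing $v_{n}=u_{n}-Fb_{n}$ and $v_{0}=u_{0}-Fb_{0}$, so that $\gamma_{n}\nabla v_{n}=J\nabla\psi_{n}$ and $\gamma_{0}\nabla v_{0}=J\nabla\psi_{0}$ by \eqref{defpsi0n}, and using $J^{-1}=J^{T}$, $JJ^{T}=I$ and $\sigma_{0}=J^{T}\gamma_{0}^{-1}J$, one finds $\sigma_{0}\nabla\psi_{n}=J^{T}\gamma_{0}^{-1}\gamma_{n}\nabla v_{n}$ and $\sigma_{0}\nabla\psi_{0}=J^{T}\nabla v_{0}$; in $\mathcal{A}$, where $\gamma_{n}=\gamma_{0}$ and $\sigma_{n}=\sigma_{0}$, this gives $\sigma_{n}\nabla\varphi_{n}=J^{T}\nabla(v_{n}-v_{0})=J^{T}\nabla\big(w_{n}-(Fb_{n}-Fb_{0})\big)$. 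Restricting to $\partial\Omega^{\prime}$ and writing $\tau=J\nu$ for the unit tangent yields $\sigma_{n}\nabla\varphi_{n}\cdot\nu=\partial_{\tau}\big(w_{n}-(Fb_{n}-Fb_{0})\big)$, hence, by boundedness of $\partial_{\tau}\colon H^{1/2}(\partial\Omega^{\prime})\to H^{-1/2}(\partial\Omega^{\prime})$ on each connected component of $\partial\Omega^{\prime}$,
\[
\|\sigma_{n}\nabla\varphi_{n}\cdot\nu\|_{H^{-1/2}(\partial\Omega^{\prime})}\lesssim\|w_{n}\|_{H^{1/2}(\partial\Omega^{\prime})}+\|Fb_{n}-Fb_{0}\|_{H^{1/2}(\partial\Omega^{\prime})}.
\]

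I would then bound the two terms separately. For the first, \eqref{bdylinfykprime} holds, in dimension $d=2$, with any exponent on $\|\dn\|_{L^{1}(\Omega)}$ strictly below $1$, in particular $\tfrac{1}{2}+\tau$, so that $\|w_{n}\|_{H^{1/2}(\partial\Omega^{\prime})}\le C\|w_{n}\|_{C^{1}(\partial\Omega^{\prime})}\le C\|\dn\|_{L^{1}(\Omega)}^{1/2+\tau}\|g\|_{H^{1/2}(\partial\Omega^{\prime})}$. For $Fb_{n}-Fb_{0}$, the first observation is that its boundary data is small: $\gamma_{n}\nabla u_{n}-\gamma_{0}\nabla u_{0}$ is divergence free on $\Omega$ and coincides with $\gamma_{0}\nabla w_{n}$ on $\partial\Omega^{\prime}$, so the constants $c_{i}^{n},c_{i}^{0}$ appearing in \eqref{Fbn}, \eqref{Fb0} satisfy $|c_{i}^{n}-c_{i}^{0}|=\tfrac{1}{|\Gamma_{i}|}\big|\int_{\Gamma_{i}}\gamma_{0}\nabla w_{n}\cdot n\,\text{d}\sigma\big|\lesssim\|\nabla w_{n}\|_{L^{\infty}(\partial\Omega^{\prime})}\lesssim\|\dn\|_{L^{1}(\Omega)}^{1/2+\tau}\|g\|_{H^{1/2}(\partial\Omega^{\prime})}$. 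Subtracting \eqref{Fbn} from \eqref{Fb0}, the function $b:=Fb_{n}-Fb_{0}$ solves $-\divx(\gamma_{n}\nabla b)=\divx\big((\gamma_{n}-\gamma_{0})\nabla Fb_{0}\big)$ in $\Omega^{\prime}$ with Neumann data $c_{i}^{n}-c_{i}^{0}$ on $\Gamma_{i}$ and $\int_{\Omega^{\prime}}b=0$; this is precisely a perturbation of the type treated in \secref{APE-gen}, with the $\gamma_{0}$-harmonic function $Fb_{0}$ (smooth on $K$, with $\|Fb_{0}\|_{C^{1}(K)}\lesssim\|g\|_{H^{1/2}(\partial\Omega^{\prime})}$) in the role of $u_{0}$, plus a small Neumann contribution. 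Running the energy estimate of \propref{EnergyEstimateGal} and then the Aubin--Cea--Nitsche argument of \lemref{L2bound}, carrying the small boundary terms through the duality identity exactly as in the proof of \lemref{boundaryindep}, gives $\|b\|_{L^{2}(\Omega^{\prime})}\lesssim\|\dn\|_{L^{1}(\Omega)}^{1/2+\tau}\|g\|_{H^{1/2}(\partial\Omega^{\prime})}$; since $b$ is $\gamma_{0}$-harmonic with Neumann data $c_{i}^{n}-c_{i}^{0}$ in the fixed annulus $\mathcal{A}$, elliptic estimates up to $\partial\Omega^{\prime}$ there give $\|b\|_{C^{1}(\partial\Omega^{\prime})}\lesssim\|b\|_{L^{2}(\Omega^{\prime})}+\max_{i}|c_{i}^{n}-c_{i}^{0}|$, hence the same bound in $H^{1/2}(\partial\Omega^{\prime})$. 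Combining the two estimates yields \eqref{improveL2general-fluxinv}.

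The hard part is the estimate for $Fb_{n}-Fb_{0}$: the small boundary contributions must be tracked carefully through both the energy identity and the Aubin--Cea--Nitsche duality argument, so that this step essentially reproduces, in the present setting, the computations of \lemref{L2bound} and \lemref{boundaryindep}; the improved exponent $\tfrac{1}{2}+\tau$, rather than the plain $\tfrac{1}{2}$ furnished by the energy estimate alone, is inherited from \eqref{bdylinfykprime} for $w_{n}$ and from the duality argument for $b$. Finally, I would record here, for use in the sequel, that a case check on $\qn$ and $\rn$ using the ellipticity of $\gamma_{0}$ together with \eqref{notlowalpha} gives $|\Sigma_{n}|_{F}\le C(\lambda_{0},\Lambda_{0})|\dn|_{F}$ a.e. on $\qn\cup\rn$, and that $\sigma_{n}\ge\sigma_{0}$ on $\rn$ while $\sigma_{n}\le\sigma_{0}$ on $\qn$; thus $\sigma_{0}$, $\sigma_{n}$ and $\Sigma_{n}$ on $\Omega^{\prime}$ satisfy the standing hypotheses of the general framework, with the roles of $\qn$ and $\rn$ exchanged.
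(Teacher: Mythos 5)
Your proposal is correct and follows essentially the same route as the paper: both rest on the observation that near $\partial\Omega^{\prime}$ one has $\gamma_{n}=\gamma_{0}$, hence $\sigma_{n}\nabla\varphi_{n}=J^{T}\nabla\left(w_{n}-\left(Fb_{n}-Fb_{0}\right)\right)$ there, and both then invoke the boundary estimate \eqref{bdylinfykprime} (with $d=2$, exponent $\frac{1}{2}+\tau$) for $w_{n}$ and for the pair $Fb_{n},Fb_{0}$. The only differences are cosmetic: the paper bounds $\left\Vert \sigma_{n}\nabla\varphi_{n}\right\Vert _{L^{\infty}\left(\partial\Omega^{\prime}\right)}$ directly instead of your tangential-derivative/$H^{1/2}$--$H^{-1/2}$ duality reduction, and it relies on the remark (appendix~\appref{polarstream}) that the Neumann data of $Fb_{n}$ and $Fb_{0}$ actually coincide, which makes your careful but harmless bookkeeping of the flux differences $c_{i}^{n}-c_{i}^{0}$ unnecessary.
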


\begin{proof}
Thanks to \eqref{defpsi0n}, since $d\left(\partial\Omega^{\prime},K\right)>0$,
on $\partial\Omega^{\prime}$ 
\begin{align*}
\sigma_{n}\nabla\varphi_{n} & =\sigma_{0}\nabla\varphi_{n}=J^{T}\nabla\left(u_{n}-Fb_{n}-u_{0}-Fb_{0}\right)\\
 & =J^{T}\nabla\left(w_{n}+Fb_{n}-Fb_{0}\right).
\end{align*}
Thanks to estimate \eqref{bdylinfykprime} applied to $w_{n}$ and
to $Fb_{n}$ and $Fb_{0}$, there holds 
\[
\left\Vert \sigma_{n}\nabla\varphi_{n}\right\Vert _{L^{\infty}\left(\partial\Omega^{\prime}\right)}\leq C\left\Vert \dn\right\Vert _{L^{1}\left(\Omega\right)}^{\frac{1}{2}+\tau}\left(\left\Vert \nabla u_{0}\right\Vert _{L^{\infty}(K)}+\left\Vert u_{0}\right\Vert _{L^{\infty}\left(\partial\Omega^{\prime}\right)}\right).
\]
which implies \eqref{improveL2general-fluxinv}.
\end{proof}
We note that the role of $\rn$ and $\qn$ are swapped when considering
\eqref{varphinv} rather than \eqref{defwn0}. The polarisability
for $\varphi_{n}$ is therefore established from \propref{insulatingpolar}
provided $\left\Vert \dn\right\Vert _{L^{p}\left(\rn\right)}<\infty$
for some $p>2.$
\begin{cor}
\label{cor:lowcontrastbyhighcontrast-1}Suppose that Assumptions \ref{enu:well-within},
\ref{enu:pertubative}, and \ref{enu:notlow} are satisfied. Additionally
assume that $d=2$ and for some $p>2,$
\[
\limsup_{n}\left\Vert \Sigma_{n}\right\Vert _{L^{p}\left(\rn\right)}<\infty.
\]
The function $\varphi_{n}=\psi_{n}-\psi_{0}$, the weak solution to
\eqref{varphinv}, satisfies
\begin{equation}
\frac{1}{\left\Vert \Sigma_{n}\right\Vert _{L^{1}(\Omega)}}\Big(\sigma_{0}-\sigma_{n}\Big)\nabla\varphi_{n}\,\text{dx}\overset{*}{\quad{\rightharpoonup}\quad}\tilde{N}\nabla\psi_{0}\,\text{d\ensuremath{\nu}}\label{lowcontrastbyhighcontrast}
\end{equation}
in the space of bounded Radon measures where $\tilde{N}\in L^{2}\big(\Omega,\mathbb{R}^{d\times d};\text{d\ensuremath{\nu}\ensuremath{\big)}},$
and $\nu$ is the Radon measure generated by the sequence $\frac{1}{\|\Sigma_{n}\|_{L^{1}(\Omega)}}\Sigma_{n}$.
The convergence is uniform with respect to $g\in H^{1/2}\left(\partial\Omega\right)$
provided $\left\Vert g\right\Vert _{H^{\frac{1}{2}}\left(\partial\Omega\right)}\leq1$.
\end{cor}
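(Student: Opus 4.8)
The plan is to read \eqref{varphinv} as yet another instance of the perturbed problem \eqref{defwn0}, now posed on $\Omega^{\prime}$ for the \emph{dual} conductivities $\sigma_{n}=J^{T}\gamma_{n}^{-1}J$ and $\sigma_{0}=J^{T}\gamma_{0}^{-1}J$, with $\psi_{0}$ playing the role of $u_{0}$, $\varphi_{n}$ that of $w_{n}$, $\Sigma_{n}$ that of $\dn$, and with the two inclusion families $\qn$ and $\rn$ interchanged. The interchange is forced: conjugation by the orthogonal matrix $J$ preserves order and Frobenius norm on symmetric matrices, so $\gamma_{n}\geq\gamma_{0}$ on $\qn$ gives $\gamma_{n}^{-1}\leq\gamma_{0}^{-1}$ hence $\sigma_{n}\leq\sigma_{0}$ there, i.e.\ $\qn$ is \emph{insulating} for the $\sigma$-problem, and symmetrically $\rn$ is \emph{highly conductive} for it. One then checks the hypotheses of \propref{insulatingpolar} in this dual setting: \enuref{well-within} holds with the same $K$ since $d(K,\partial\Omega^{\prime})>0$; $\sigma_{0}$ is uniformly elliptic and $W_{\mathrm{loc}}^{2,d}$ because $\gamma_{0}$ is and inversion and conjugation by $J$ preserve these properties; \enuref{notlow} holds with $\qn$, $\rn$ swapped; and \enuref{pertubative} transfers to $\Sigma_{n}$ thanks to the pointwise comparison $\left|\Sigma_{n}\right|_{F}\leq C(d,\lambda_{0},\Lambda_{0})\left|\dn\right|_{F}$ on $\qn\cup\rn$. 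The latter follows by writing $\left|\Sigma_{n}\right|_{F}=\left|\gamma_{n}^{-1}+\gamma_{0}^{-1}\gamma_{n}\gamma_{0}^{-1}\right|_{F}$ there and using, from $\dn=\gamma_{n}+\gamma_{0}\gamma_{n}^{-1}\gamma_{0}$ and $\gamma_{0}\geq\lambda_{0}I$, the bounds $\left|\gamma_{n}\right|_{F}\leq\left|\dn\right|_{F}$, $\left|\gamma_{0}^{-1}\gamma_{n}\gamma_{0}^{-1}\right|_{F}\leq d\lambda_{0}^{-2}\left|\gamma_{n}\right|_{F}$, and $\left|\gamma_{n}^{-1}\right|_{F}\leq\lambda_{0}^{-2}\left|\gamma_{0}\gamma_{n}^{-1}\gamma_{0}\right|_{F}\leq\lambda_{0}^{-2}\left|\dn\right|_{F}$ (conjugation by $\gamma_{0}$ cannot shrink the Frobenius norm by more than $\lambda_{0}^{2}$). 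Consequently $\left\Vert \Sigma_{n}\right\Vert _{L^{1}(\Omega^{\prime})}\leq C\left\Vert \dn\right\Vert _{L^{1}(\Omega)}\to0$, and since $\Sigma_{n}$ is supported in $K$ this equals $\left\Vert \Sigma_{n}\right\Vert _{L^{1}(\Omega)}$. Finally, because $d=2$, the assumed bound $\limsup_{n}\left\Vert \Sigma_{n}\right\Vert _{L^{p}(\rn)}<\infty$ for some $p>2$ is exactly \enuref{mixed} for the $\sigma$-problem applied to its conductive set $\rn$ (and the case $\rn=\emptyset$ is the ``$\qn=\emptyset$'' branch of \propref{insulatingpolar}).

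With these identifications, \propref{insulatingpolar} applies to $\varphi_{n}$ verbatim, and, jointly with \lemref{defmu} and \propref{EnergyEstimateGal} for the $\sigma$-problem, produces (along a subsequence) the Radon measure $\nu$ generated by $\left\Vert \Sigma_{n}\right\Vert _{L^{1}(\Omega)}^{-1}\left|\Sigma_{n}\right|_{F}$, the measure limit $\tilde{D}$ of $\left\Vert \Sigma_{n}\right\Vert _{L^{1}(\Omega)}^{-1}(\sigma_{n}-\sigma_{0})$, and the weak-$*$ limit of $\left\Vert \Sigma_{n}\right\Vert _{L^{1}(\Omega)}^{-1}(\sigma_{0}-\sigma_{n})\nabla\varphi_{n}$. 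Testing the error identity of \propref{insulatingpolar} against $\nabla x_{i}$ times a cutoff equal to $1$ near $K$, introducing the dual correctors of \defref{cndn-1} (whose weak-$*$ limits define a tensor $\tilde{W}$), and passing to the limit exactly as in the concluding argument of the proof of \thmref{main}, one arrives at \eqref{lowcontrastbyhighcontrast} with $\tilde{N}=\tilde{D}-\tilde{W}\in L^{2}(\Omega,\mathbb{R}^{d\times d};\text{d}\nu)$.

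Uniformity in $g$ is inherited because every ingredient of \propref{insulatingpolar} --- the energy bound \eqref{energyest}, the improved $L^{2}$ bound \eqref{improveL2general}, the remainder estimate \eqref{remainder2} --- is linear in the data through the quantity $\left\Vert \nabla\psi_{0}\right\Vert _{L^{\infty}(K)}+\left\Vert \psi_{0}\right\Vert _{L^{\infty}(\partial\Omega^{\prime\prime})}$ for a suitable intermediate domain $K\subset\Omega^{\prime\prime}\subset\Omega^{\prime}$, and from \eqref{defpsi0n} $\nabla\psi_{0}$ is $\gamma_{0}\nabla(u_{0}-Fb_{0})$ rotated by $J^{T}$, so interior elliptic estimates for $u_{0}$ (already recorded above, giving $\left\Vert u_{0}\right\Vert _{C^{1}(K)}\lesssim\left\Vert g\right\Vert _{H^{1/2}(\partial\Omega)}$) and for $Fb_{0}$, together with the normalisation $\int_{\Omega^{\prime}}\psi_{0}=0$, Poincaré--Wirtinger, and interior regularity for $\divx(\sigma_{0}\nabla\psi_{0})=0$, bound that quantity by $\left\Vert g\right\Vert _{H^{1/2}(\partial\Omega)}$.

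The one step that requires genuine care is the verification of the hypotheses of \propref{insulatingpolar} for the dual problem, and in particular recognising that the high-conductivity integrability condition \enuref{mixed} is now carried by $\rn$ rather than $\qn$ --- which is precisely why the corollary's extra assumption is imposed on $\left\Vert \Sigma_{n}\right\Vert _{L^{p}(\rn)}$ with $p>d=2$. Everything downstream is insensitive to whether the conductivities are named $\gamma$ or $\sigma$ and to which of $\qn$, $\rn$ is insulating, so the a priori estimates of \secref{APE-gen} and the polarisation argument of \secref{ProofPolar} transfer without change.
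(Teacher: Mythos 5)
Your overall route---dualising via stream functions, observing that conjugation by $J$ together with matrix inversion swaps the roles of $\qn$ and $\rn$, transferring \enuref{pertubative} to $\Sigma_{n}$ through a pointwise comparison with $\dn$ (the paper gets this more directly from the identity $\Sigma_{n}=J^{T}\gamma_{0}^{-1}\dn\gamma_{0}^{-1}J$ and \eqref{equivsigmadn}, but your chain of Frobenius-norm estimates reaches the same conclusion), and then invoking the polarisability machinery for the $\sigma$-problem with the $L^{p}(\rn)$, $p>2=d$, hypothesis playing the role of \enuref{mixed}---is the same as the paper's.

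There is, however, a genuine gap where you assert that \propref{insulatingpolar} ``applies to $\varphi_{n}$ verbatim''. The function $\varphi_{n}=\psi_{n}-\psi_{0}$ is \emph{not} the canonical variational solution of the dual analogue of \eqref{defwn0}: it satisfies \eqref{varphinv} only in $\mathcal{D}^{\prime}\left(\Omega^{\prime}\right)$, and its boundary behaviour on $\partial\Omega^{\prime}$ is inherited from the original problem on $\Omega$, so it lies in none of the spaces $X$ (or $Y$ of \lemref{boundaryindep}) for which the a priori estimates are established. In particular, the energy estimate \eqref{energyest} is proved via the minimisation property of $w_{n}$ over $X$, and the Aubin--C\'ea--Nitsche bound \eqref{improveL2general} uses integration by parts with no boundary contribution; neither is available for $\varphi_{n}$ as it stands, so the whole chain culminating in \propref{remainderestimate}/\propref{insulatingpolar} cannot be applied to $\varphi_{n}$ directly. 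This is precisely the purpose of the unnumbered proposition preceding the corollary: estimate \eqref{improveL2general-fluxinv} shows that the Neumann datum $\sigma_{n}\nabla\varphi_{n}\cdot\nu$ on $\partial\Omega^{\prime}$ is of size $\left\Vert \dn\right\Vert _{L^{1}(\Omega)}^{\frac{1}{2}+\tau}$, and the comparison argument of \lemref{boundaryindep} (Caccioppoli with a cut-off equal to $1$ near $K$) then shows that replacing $\varphi_{n}$ by the canonical variational solution of the dual problem changes $\frac{1}{\left\Vert \Sigma_{n}\right\Vert _{L^{1}(\Omega)}}\left(\sigma_{0}-\sigma_{n}\right)\nabla\varphi_{n}$ only by a term vanishing in $L^{1}$, so the limiting measures coincide; the paper's proof cites exactly these two ingredients. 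Your argument never invokes \eqref{improveL2general-fluxinv} or \lemref{boundaryindep}, so this transfer step is missing; it is reparable with tools already in the paper, but as written the application of the variational estimates to $\varphi_{n}$ is unjustified.
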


\begin{proof}
The proof follows directly from \propref{remainderestimate} and \lemref{boundaryindep}.
\end{proof}
\begin{lem}
The symmetric positive definite matrix $\Sigma_{n}$ given by \defref{defsigma}
satisfies 
\[
\Sigma_{n}=\sigma_{n}+\sigma_{0}\sigma_{n}^{-1}\sigma_{0}=J^{T}\gamma_{0}^{-1}\dn\gamma_{0}^{-1}J.
\]
As a consequence, denoting $\nu$ and $\mu$ to be the Radon measures
generated by the sequences $\frac{\Sigma_{n}}{\left\Vert \Sigma_{n}\right\Vert _{L^{1}(\Omega)}}$
and $\frac{\dn}{\left\Vert \dn\right\Vert _{L^{1}(\Omega)}}$ respectively,
the Radon-Nikodym derivatives $\frac{\text{d}\nu}{\text{d\ensuremath{\mu}}}$
and $\frac{\text{d}\mu}{\text{d\ensuremath{\nu}}}$ belongs to $L^{\infty}\left(\Omega;\text{d}\mu\right)$
and $L^{\infty}\left(\Omega;\text{d}\nu\right)$ respectively, and
the spaces $L^{p}\left(\Omega;\text{d}\mu\right)$ are equivalent
to $L^{p}\left(\Omega;\text{d}\nu\right)$ for any $p>1.$
\end{lem}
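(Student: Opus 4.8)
The plan is to prove the algebraic identity by a direct computation exploiting that $J$ is orthogonal, and then deduce the measure-theoretic statement from it together with the pointwise ellipticity of $\gamma_{0}$. For the identity I would use $JJ^{T}=J^{T}J=I_{2}$ and $J^{-1}=J^{T}$ to get $\sigma_{n}^{-1}=(J^{T}\gamma_{n}^{-1}J)^{-1}=J^{T}\gamma_{n}J$, and then, inserting factors $JJ^{T}=I_{2}$,
\[
\sigma_{0}\sigma_{n}^{-1}\sigma_{0}=(J^{T}\gamma_{0}^{-1}J)(J^{T}\gamma_{n}J)(J^{T}\gamma_{0}^{-1}J)=J^{T}\gamma_{0}^{-1}\gamma_{n}\gamma_{0}^{-1}J,
\]
so that $\sigma_{n}+\sigma_{0}\sigma_{n}^{-1}\sigma_{0}=J^{T}\big(\gamma_{n}^{-1}+\gamma_{0}^{-1}\gamma_{n}\gamma_{0}^{-1}\big)J$ on $\qn\cup\rn$. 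Since $\gamma_{0}^{-1}\dn\gamma_{0}^{-1}=\gamma_{0}^{-1}(\gamma_{n}+\gamma_{0}\gamma_{n}^{-1}\gamma_{0})\gamma_{0}^{-1}=\gamma_{n}^{-1}+\gamma_{0}^{-1}\gamma_{n}\gamma_{0}^{-1}$ on $\qn\cup\rn$, and since both $\Sigma_{n}$ and $J^{T}\gamma_{0}^{-1}\dn\gamma_{0}^{-1}J$ carry the indicator $1_{\qn\cup\rn}$, this yields $\Sigma_{n}=J^{T}\gamma_{0}^{-1}\dn\gamma_{0}^{-1}J$; symmetry and positive semidefiniteness are immediate from this congruence form since $\dn\geq 0$.

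For the consequence, I would write $\Sigma_{n}=A^{T}\dn A$ with $A=\gamma_{0}^{-1}J$ and, conversely (using $JJ^{T}=I_{2}$), $\dn=B^{T}\Sigma_{n}B$ with $B=J^{T}\gamma_{0}$. Since $J$ is orthogonal, $\|A\|_{\mathrm{op}}=\|\gamma_{0}^{-1}\|_{\mathrm{op}}\leq\lambda_{0}^{-1}$ and $\|B\|_{\mathrm{op}}=\|\gamma_{0}\|_{\mathrm{op}}\leq\Lambda_{0}$ a.e.\ in $\Omega$. Applying $|PQ|_{F}\leq\|P\|_{\mathrm{op}}|Q|_{F}$ and $|PQ|_{F}\leq|P|_{F}\|Q\|_{\mathrm{op}}$ twice gives the pointwise comparison
\[
\Lambda_{0}^{-2}|\dn|_{F}\leq|\Sigma_{n}|_{F}\leq\lambda_{0}^{-2}|\dn|_{F}\quad\text{a.e.\ in }\Omega,
\]
whence $\Lambda_{0}^{-2}\|\dn\|_{L^{1}(\Omega)}\leq\|\Sigma_{n}\|_{L^{1}(\Omega)}\leq\lambda_{0}^{-2}\|\dn\|_{L^{1}(\Omega)}$ after integration, and therefore
\[
\frac{\lambda_{0}^{2}}{\Lambda_{0}^{2}}\,\frac{|\dn|_{F}}{\|\dn\|_{L^{1}(\Omega)}}\leq\frac{|\Sigma_{n}|_{F}}{\|\Sigma_{n}\|_{L^{1}(\Omega)}}\leq\frac{\Lambda_{0}^{2}}{\lambda_{0}^{2}}\,\frac{|\dn|_{F}}{\|\dn\|_{L^{1}(\Omega)}}.
\]
Passing to the limit along a subsequence on which both unit-mass densities converge weakly-$*$ (\lemref{defmu} provides $\mu$; a further extraction provides $\nu$), tested against nonnegative $\phi\in C(\overline{\Omega})$, gives $\frac{\lambda_{0}^{2}}{\Lambda_{0}^{2}}\mu\leq\nu\leq\frac{\Lambda_{0}^{2}}{\lambda_{0}^{2}}\mu$ as Radon measures. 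Hence $\mu$ and $\nu$ are mutually absolutely continuous with $\frac{\mathrm{d}\nu}{\mathrm{d}\mu}$, $\frac{\mathrm{d}\mu}{\mathrm{d}\nu}$ bounded between $\lambda_{0}^{2}/\Lambda_{0}^{2}$ and $\Lambda_{0}^{2}/\lambda_{0}^{2}$, and the identity $\|f\|_{L^{p}(\mathrm{d}\nu)}^{p}=\int|f|^{p}\,\frac{\mathrm{d}\nu}{\mathrm{d}\mu}\,\mathrm{d}\mu$ shows that $L^{p}(\Omega;\mathrm{d}\mu)$ and $L^{p}(\Omega;\mathrm{d}\nu)$ coincide with equivalent norms for every $p\geq1$.

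The only step needing genuine care is the pointwise Frobenius comparison under the congruence $\dn\mapsto A^{T}\dn A$: one must bound $|A^{T}\dn A|_{F}$ by $\|A\|_{\mathrm{op}}^{2}|\dn|_{F}$, not by the generally false $|A|_{F}^{2}|\dn|_{F}$, and check that $\|A\|_{\mathrm{op}}$ and $\|B\|_{\mathrm{op}}$ are uniformly controlled by $\lambda_{0}$ and $\Lambda_{0}$ — which is exactly where the orthogonality of $J$ and the pointwise ellipticity $\lambda_{0}I_{d}\leq\gamma_{0}\leq\Lambda_{0}I_{d}$ are used. The remaining manipulations are routine.
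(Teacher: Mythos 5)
Your proposal is correct and follows essentially the same route as the paper: verify the algebraic identity $\Sigma_{n}=J^{T}\gamma_{0}^{-1}d_{n}\gamma_{0}^{-1}J$, deduce the pointwise equivalence $\Lambda_{0}^{-2}\left|d_{n}\right|_{F}\leq\left|\Sigma_{n}\right|_{F}\leq\lambda_{0}^{-2}\left|d_{n}\right|_{F}$ (the paper phrases it with the extreme eigenvalues of $\gamma_{0}^{-1}$, which is the same estimate), and pass to the limiting measures to get mutually bounded Radon--Nikodym derivatives and the $L^{p}$ equivalence. One small aside: the bound $\left|A^{T}d_{n}A\right|_{F}\leq\left|A\right|_{F}^{2}\left|d_{n}\right|_{F}$ that you describe as ``generally false'' is in fact true, since the Frobenius norm is submultiplicative; your operator-norm version is simply the sharper choice, so this remark does not affect the validity of your argument.
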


\begin{proof}
The formula $\Sigma_{n}=J^{T}\gamma_{0}^{-1}\dn\gamma_{0}^{-1}J$
is straightforward to verify. It follows that 
\begin{equation}
\left|\dn\right|_{F}\left(\min_{\overline{\Omega}}\lambda\left(\gamma_{0}^{-1}\right)\right)^{2}\leq\left|\Sigma_{n}\right|_{F}\leq\left|\dn\right|_{F}\left(\max_{\overline{\Omega}}\lambda\left(\gamma_{0}^{-1}\right)\right)^{2}.\label{eq:equivsigmadn}
\end{equation}
Since these two quantities are equivalent, the conclusion follows.
\end{proof}
\begin{prop}
\label{prop:polarstream}Suppose Assumptions \ref{enu:well-within},
\ref{enu:pertubative}, and \ref{enu:notlow} are satisfied. Additionally
assume that $d=2$ and for some $p>2,$
\[
\limsup_{n}\left\Vert \dn\right\Vert _{L^{p}\left(\rn\right)}<\infty.
\]
Given $\Omega^{\prime}$ a smooth domain as defined in \propref{Polar-Outline},
there holds 
\[
\int_{\Omega}\left(\left(\gamma_{n}-\gamma_{0}\right)\nabla w_{n}\cdot\nabla x_{i}\right)\phi\,\text{d}x=\int_{\Omega}\left(\left(\gamma_{n}-\gamma_{0}\right)\nabla w_{n}^{i}\cdot\nabla u_{0}\right)\phi\,\text{d}x+\int_{\Omega}r_{n}\cdot\nabla\phi\,\text{d}x
\]
with 
\[
\left\Vert r_{n}\right\Vert _{L^{1}\left(\Omega\right)}\leq C\left\Vert \dn\right\Vert _{L^{1}\left(\Omega\right)}^{1+\eta}\left(\left\Vert \nabla u_{0}\right\Vert _{L^{\infty}(K)}+\left\Vert u_{0}\right\Vert _{L^{\infty}\left(\partial\Omega^{\prime}\right)}\right),
\]
where the positive constants $C$ and $\eta$ may depend only on $\tau$,
$\Omega$, $\ensuremath{K}$, $\left\Vert \gamma_{0}\right\Vert _{W^{2,d}\left(\Omega\right)}$,
$\Lambda_{0}$, $\lambda_{0}$ and $\left\Vert \dn\right\Vert _{L^{p}\left(\qn\right)}$.
\end{prop}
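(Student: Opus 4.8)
The plan is to deduce this from \propref{insulatingpolar} via the two-dimensional stream-function duality of \eqref{defpsi0n}--\eqref{varphinv}, which interchanges the conductive set $\qn$ and the insulating set $\rn$: the present hypothesis --- an extra $L^{p}$ bound on $\dn$ over $\rn$ for some $p>2$ --- then becomes an instance of \enuref{mixed} for the dual problem $(\sigma_{n},\sigma_{0},\Sigma_{n})$ on $\Omega^{\prime}$, whose polarisability is already known. First I would verify the transfer of hypotheses. The identity $\Sigma_{n}=J^{T}\gamma_{0}^{-1}\dn\gamma_{0}^{-1}J$ together with the two-sided bound \eqref{equivsigmadn} give $|\Sigma_{n}|_{F}\simeq|\dn|_{F}$ pointwise; hence $\|\Sigma_{n}\|_{L^{1}}\to0$, the generated measure $\nu$ is mutually absolutely continuous with $\mu$ with bounded densities, and $\limsup_{n}\|\Sigma_{n}\|_{L^{p}(\rn)}<\infty$. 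Inverting and conjugating by $J$ turns $\gamma_{n}\geq\gamma_{0}$ on $\qn$ and $\gamma_{n}\leq\gamma_{0}$ on $\rn$ into $\sigma_{n}\leq\sigma_{0}$ on $\qn$ and $\sigma_{n}\geq\sigma_{0}$ on $\rn$, so for the dual problem $\rn$ is the conductive inclusion set and the displayed bound is exactly \enuref{mixed}; moreover $\sigma_{0}=J^{T}\gamma_{0}^{-1}J\in W_{\text{loc}}^{2,d}$ is uniformly elliptic with constants controlled by $\lambda_{0},\Lambda_{0},\|\gamma_{0}\|_{W^{2,d}}$. Thus \enuref{well-within}, \enuref{pertubative}, \enuref{notlow} and \enuref{extra} all hold for the dual problem.

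Next, $\varphi_{n}=\psi_{n}-\psi_{0}$ is the exact analogue of $w_{n}$: by \eqref{varphinv} it is the weak solution of the $\sigma_{n}$-conductivity equation on $\Omega^{\prime}$ with background $\psi_{0}$, and elliptic regularity in \eqref{defpsi0n}, \eqref{Fbn}, \eqref{Fb0} shows that $\|\nabla\psi_{0}\|_{L^{\infty}(K)}$ and $\|\psi_{0}\|_{L^{\infty}(\partial\Omega^{\prime})}$ are both $\lesssim Z$, where $Z:=\|\nabla u_{0}\|_{L^{\infty}(K)}+\|u_{0}\|_{L^{\infty}(\partial\Omega^{\prime})}$. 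Invoking \lemref{boundaryindep} to pass from $\Omega$ to $\Omega^{\prime}$, \propref{insulatingpolar} applied to the dual problem produces correctors $\tilde{w}_{n}^{i}$ for $\sigma_{n}$ and the identity
\[
\int_{\Omega^{\prime}}\big((\sigma_{n}-\sigma_{0})\nabla\varphi_{n}\cdot\nabla x_{i}\big)\phi\,\text{d}x=\int_{\Omega^{\prime}}\big((\sigma_{n}-\sigma_{0})\nabla\tilde{w}_{n}^{i}\cdot\nabla\psi_{0}\big)\phi\,\text{d}x+\int_{\Omega^{\prime}}\tilde{r}_{n}\cdot\nabla\phi\,\text{d}x
\]
with $\|\tilde{r}_{n}\|_{L^{1}}\lesssim\|\Sigma_{n}\|_{L^{1}}^{1+\eta}Z\lesssim\|\dn\|_{L^{1}}^{1+\eta}Z$, the underlying measure limit being the $\tilde{N}\nabla\psi_{0}\,\text{d}\nu$ of \corref{lowcontrastbyhighcontrast-1}.

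It remains to translate this back into the $\gamma_{n}$-picture. From $\gamma_{n}\nabla(u_{n}-Fb_{n})=J\nabla\psi_{n}$, $\gamma_{n}\nabla u_{n}-\gamma_{0}\nabla u_{0}=\gamma_{n}\nabla w_{n}+(\gamma_{n}-\gamma_{0})\nabla u_{0}$, $JJ^{T}=I$ and $(\gamma_{n}^{-1}-\gamma_{0}^{-1})\gamma_{n}=-\gamma_{0}^{-1}(\gamma_{n}-\gamma_{0})$ one derives a pointwise identity $(\sigma_{n}-\sigma_{0})\nabla\varphi_{n}=-J^{T}\gamma_{0}^{-1}(\gamma_{n}-\gamma_{0})\nabla w_{n}+\rho_{n}$, where $\rho_{n}$ collects a piece bounded pointwise by $|\dn|_{F}Z$ through \eqref{notlowalpha} and a piece carrying $Fb_{n}-Fb_{0}$, which is $\gamma_{0}$-harmonic near $\partial\Omega^{\prime}$ with trace of size $\|\dn\|_{L^{1}}^{\tau/2}Z$ by the argument behind \eqref{bdylinfykprime} (near $\partial\Omega^{\prime}$ the flux difference reduces to $\gamma_{0}\nabla w_{n}$). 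A parallel stream-function computation identifies $\tilde{w}_{n}^{i}$ with the corrector $w_{n}^{k}$, $k$ fixed by $J^{T}\mathbf{e}_{i}$, up to a similar harmonic correction, and $\nabla\psi_{0}$ with $J^{T}\gamma_{0}\nabla(u_{0}-Fb_{0})$. Substituting these relations into the dual identity, integrating the $\rho_{n}$-contributions against $\nabla\phi$ by parts, and passing from $\nu$ to $\mu$ via \eqref{equivsigmadn}, one recovers for $(\gamma_{n}-\gamma_{0})\nabla w_{n}$ the identity of the statement --- which is in any case the universal one of \propref{remainderestimate} --- with a remainder inheriting the exponent $1+\eta$ of $\tilde{r}_{n}$, hence $\|r_{n}\|_{L^{1}(\Omega)}\lesssim\|\dn\|_{L^{1}(\Omega)}^{1+\eta}Z$ with $C$ and $\eta$ of the announced dependence.

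The main obstacle is this final translation. The conjugation by $J\gamma_{0}$ is harmless pointwise, but several bridging terms --- the $\qn$-supported products and the $Fb_{0}$-corrections entering $\nabla\psi_{0}$ and $\rho_{n}$ --- are a priori only $O(\|\dn\|_{L^{1}})$, and one must check that each is either cancelled against its counterpart on the corrector side of the identity (both sides being instances of \propref{remainderestimate}, one for $\gamma_{n}$ and one for $\sigma_{n}$) or improved to order $\|\dn\|_{L^{1}}^{1+\eta}$ using the refined $L^{2}$ and boundary estimates \eqref{improveL2general}, \eqref{bdylinfykprime} for the $w_{n}$- and $\varphi_{n}$-type solutions together with the equivalence of the measures $\mu$ and $\nu$. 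Managing this bookkeeping, rather than the duality itself, is where the work lies.
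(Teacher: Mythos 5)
Your route is the paper's route: pass to the stream functions, observe that the duality $\gamma_{n}\mapsto\sigma_{n}=J^{T}\gamma_{n}^{-1}J$ swaps the roles of $\qn$ and $\rn$ so that the hypothesis $\limsup_{n}\|\dn\|_{L^{p}(\rn)}<\infty$ becomes an instance of \enuref{mixed} for the dual problem, apply \propref{insulatingpolar} (through \corref{lowcontrastbyhighcontrast-1} and \lemref{boundaryindep}), and conjugate back by $J^{T}\gamma_{0}$, using \eqref{equivsigmadn} to exchange the measures $\nu$ and $\mu$. The verification of the transferred hypotheses in your first paragraph is correct and matches the paper.

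The step you defer as ``bookkeeping'' is, however, precisely where the paper's proof differs from your sketch, and your proposed mechanisms for closing it are partly misdirected. First, the paper removes the $Fb_{n},Fb_{0}$ terms of \eqref{Fbn}--\eqref{Fb0} altogether: an integration by parts shows $\int_{\Gamma_{i}}\gamma_{n}\nabla u_{n}\cdot n\,\text{d}\sigma=\int_{\Gamma_{i}}\gamma_{0}\nabla u_{0}\cdot n\,\text{d}\sigma$, so after restricting $g$ to the finite-codimension subspace on which $Fb_{0}=0$ (no loss of generality), also $Fb_{n}=0$; no harmonic-correction error terms then arise. Second, the back-translation is not carried out through a pointwise identity whose extra piece is estimated or integrated by parts: one tests the dual weak-$*$ convergence against $\left(J^{T}\gamma_{0}\right)\Xi$, which converts $\left(\sigma_{0}-\sigma_{n}\right)\nabla\psi_{n}$ exactly into $\gamma_{0}^{-1}\left(\gamma_{0}-\gamma_{n}\right)\nabla u_{n}$ conjugated by $J^{T}$, while the term you call $\rho_{n}$, namely $\gamma_{0}\gamma_{n}^{-1}\left(\gamma_{0}-\gamma_{n}\right)\nabla u_{0}=\left(\left(\gamma_{0}-\gamma_{n}\right)+\dn^{\prime}\right)\nabla u_{0}$, is genuinely of order $\left\Vert \dn\right\Vert _{L^{1}(\Omega)}$ and can neither be improved to order $\left\Vert \dn\right\Vert _{L^{1}(\Omega)}^{1+\eta}$ nor discarded: its normalisation converges, by the argument of \lemref{defmu}, to a measure $\mathcal{D}\nabla u_{0}\,\text{d}\mu$, and it is absorbed into the limiting tensor, the final identification being $\left(\gamma_{0}J\right)N\left(\gamma_{0}J\right)^{T}-\mathcal{D}$ applied to $\nabla u_{0}$. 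Relatedly, your identification of the dual correctors $\tilde{w}_{n}^{i}$ with primal correctors $w_{n}^{k}$, $k$ determined by $J^{T}\mathbf{e}_{i}$, is not exact when $\gamma_{0}$ is variable (the dual source is $\left(\sigma_{0}-\sigma_{n}\right)\mathbf{e}_{i}$, which conjugates to $\gamma_{0}^{-1}$-weighted data, not to a coordinate direction) and is not needed: the paper conjugates at the level of the limit tensors, not of the correctors. So the plan is sound and is the intended one, but the decisive translation has an algebraic resolution, in which the $O\big(\left\Vert \dn\right\Vert _{L^{1}(\Omega)}\big)$ bridging term becomes part of the polarisation tensor, rather than the estimate-and-cancellation argument you leave open.
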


The proof of this result is given in appendix~\ref{app:polarstream}.

\subsection{The non finely intertwined case.}

The main result of this section is the establishes \propref{Polar-Outline}
in the final case, namely when \enuref{intertwined} holds. Example~\ref{exa:notfinelyintertwined}
is an illustration of such a configuration.
\begin{example}
\label{exa:notfinelyintertwined} Suppose that $\Omega\subset\mathbb{R}^{d}$
is the ball $B\left(0,d\right)$ of radius $d$ centred at the origin.
Assume that $\gamma_{0}=I_{d}$. Given $\epsilon>0$, for $n\geq2$,
we set 
\[
A_{n}=\bigcup_{k=1}^{n}\left(\frac{k}{n},\frac{k}{n}+\frac{1}{n^{d+1+\epsilon}}\right)\times\left(0,1\right)^{d-1},\quad B_{n}=\bigcup_{k=1}^{n}\left(\frac{k}{n}+\frac{1}{2n},\frac{k}{n}+\frac{3}{4n}\right)\times\left(0,1\right)^{d-1},
\]
and 
\[
\gamma_{n}=\left(\left(n\frac{i-1}{d-1}+\frac{d-i}{d-1}\right)\delta_{ij}\right)_{1\leq i,j\leq d}\text{ on }A_{n},\quad\gamma_{n}=\frac{\ln n}{n}I_{d}\text{ on }B_{n}.
\]
We have $A_{n}\cup B_{n}\subset\left(0,1\right)^{d}\subset\Omega.$
The insulating and conductive strips are separated by a distance $d\left(A_{n},B_{n}\right)\propto\frac{1}{n}.$
We have
\[
\left\Vert d_{n}\right\Vert _{L^{1}\left(A_{n}\right)}\propto\frac{1}{n^{d-1+\epsilon}},\quad\left\Vert d_{n}\right\Vert _{L^{1}\left(B_{n}\right)}\propto\frac{1}{\ln n},
\]
therefore $\left\Vert d_{n}\right\Vert _{L^{1}\left(\Omega\right)}\to0.$
We have $d\left(A_{n},B_{n}\right)>\left\Vert d_{n}\right\Vert _{L^{1}\left(A_{n}\right)}^{\tau}$
for $\tau\in\left(0,\frac{1}{d-1}\right).$We compute that $\left\Vert d_{n}\right\Vert _{L^{p}\left(A_{n}\right)}\propto n^{p-\left(d+\epsilon\right)}$
. In particular for $p=d>\frac{d}{2}$ there holds $\left\Vert d_{n}\right\Vert _{L^{p}\left(A_{n}\right)}\to0.$
Notice that the conductive strips are narrowed to accomodate the extra
integrability, whereas the insulating one are just chosen to so that
$\left\Vert d_{n}\right\Vert _{L^{1}\left(\Omega\right)}\to0$.
\end{example}

\begin{prop}
\label{prop:notwinning}Suppose assumptions \ref{enu:well-within},
\ref{enu:pertubative}, and \ref{enu:notlow} are satisfied. Suppose
additionally that for some $p>\frac{d}{2}$, 
\[
\limsup_{n}\left\Vert \dn\right\Vert _{L^{p}\left(\qn\right)}^{\frac{1}{2}}<\infty
\]
and that there exists a sequence of function $\left(\chi_{n}\right)_{n\in\mathbb{N}}\in\left(W^{1,\infty}\left(\Omega;[0,1]\right)\right)^{\mathbb{N}}$
such that $\chi_{n}\equiv0$ on $\rn$, $\chi_{n}=1$ on $\qn$ and
\[
\left\Vert \dn\right\Vert _{L^{1}\left(\Omega\right)}^{\tau}\left\Vert \nabla\chi_{n}\right\Vert _{L^{\infty}\left(\Omega\right)}<\infty,
\]
for some $\tau<\frac{1}{\left(d-1\right)}$. Given $\Omega^{\prime}$
a smooth domain as defined in \propref{Polar-Outline}, there holds
\[
\int_{\Omega}\left(\Big(\gamma_{n}-\gamma_{0}\Big)\nabla w_{n}\cdot\nabla x_{i}\right)\phi\text{\,d}x=\int_{\Omega}\left(\Big(\gamma_{n}-\gamma_{0}\Big)\nabla w_{n}^{i}\cdot\nabla u_{0}\right)\phi\text{\,d}x+\int_{\Omega}r_{n}\cdot\nabla\phi\text{\,d}x
\]
with 
\[
\left\Vert r_{n}\right\Vert _{L^{1}\left(\Omega\right)}\leq C\left\Vert \dn\right\Vert _{L^{1}\left(\Omega\right)}^{1+\eta}\left(\left\Vert \nabla u_{0}\right\Vert _{L^{\infty}(K)}+\left\Vert u_{0}\right\Vert _{L^{\infty}\left(\partial\Omega^{\prime}\right)}\right),
\]
where the positive constants $C$ and $\eta$ may depend only on $\tau$,
$\Omega$, $\ensuremath{K}$, $\left\Vert \gamma_{0}\right\Vert _{W^{2,d}\left(\Omega\right)}$
, $\Lambda_{0}$ and $\lambda_{0}$, $p$ and $\tau$ only.
\end{prop}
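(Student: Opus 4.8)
The identity in the statement is literally \eqref{remainder} of \propref{remainderestimate}, with the same $r_{n}\in L^{1}(\Omega)$; the only new content is the quantitative bound $\|r_{n}\|_{L^{1}(\Omega)}\le C\|\dn\|_{L^{1}(\Omega)}^{1+\eta}Z$, where I abbreviate $Z:=\|\nabla u_{0}\|_{L^{\infty}(K)}+\|u_{0}\|_{L^{\infty}(\partial\Omega^{\prime})}$. Reading off the proof of \propref{remainderestimate}, $\|r_{n}\|_{L^{1}(\Omega)}$ splits into a part supported outside $\qn$, which is $\lesssim\|\dn\|_{L^{1}(\Omega)}^{\tau}Z$ for any $\tau\in[1,\frac{2d-1}{2d-2})$ and is thus harmless once we fix $\tau>1$, and a part over $\qn$ controlled by the four integrals in \eqref{bound1}. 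Applying Hölder to each of those with exponents $\frac{2p}{p+1}$ and $q:=\frac{2p}{p-1}$ and bounding the flux factors $\|\gamma_{n}\nabla w_{n}\|_{L^{2p/(p+1)}(\qn)}$, $\|\gamma_{n}\nabla w_{n}^{i}\|_{L^{2p/(p+1)}(\qn)}$ and $\|\dn\|_{L^{2p/(p+1)}(\qn)}$ by $\|\dn\|_{L^{1}(\Omega)}^{1/2}\|\dn\|_{L^{p}(\qn)}^{1/2}Z$ (the corollary to \propref{EnergyEstimateGal} together with \eqref{notlowalpha}), and using $\limsup_{n}\|\dn\|_{L^{p}(\qn)}^{1/2}<\infty$, the whole statement reduces to an \emph{improved local estimate} $\|w_{n}\|_{L^{q}(\qn)}+\|w_{n}^{i}\|_{L^{q}(\qn)}\lesssim\|\dn\|_{L^{1}(\Omega)}^{\beta}Z$ with $\beta>\frac{1}{2}$.

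Both hypotheses of the proposition are used for this. The point of $p>\frac{d}{2}$ is that then $q=\frac{2p}{p-1}$ lies strictly below the Sobolev conjugate $2^{*}=\frac{2d}{d-2}$ (and is finite when $d=2$), so an $L^{q}(\qn)$ norm of a function vanishing on $\partial\Omega$ can be obtained by interpolating the endpoints $L^{2}$ and $L^{2^{*}}$. The $L^{2}$ endpoint, carrying a positive power of $\|\dn\|_{L^{1}(\Omega)}$, is exactly \lemref{L2bound} and \eqref{Lsbound}: $\|w_{n}\|_{L^{2}(\Omega)}+\|w_{n}^{i}\|_{L^{2}(\Omega)}\lesssim\|\dn\|_{L^{1}(\Omega)}^{\tau^{\prime}/2}Z$ for any $\tau^{\prime}\in[1,\frac{d}{d-1})$. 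For the $L^{2^{*}}$ endpoint I would use the cut-off $\chi_{n}$: the functions $\chi_{n}w_{n},\chi_{n}w_{n}^{i}\in H_{0}^{1}(\Omega)$ agree with $w_{n},w_{n}^{i}$ on $\qn$, so by the Sobolev inequality $\|w_{n}\|_{L^{2^{*}}(\qn)}\le\|\chi_{n}w_{n}\|_{L^{2^{*}}(\Omega)}\lesssim\|\nabla(\chi_{n}w_{n})\|_{L^{2}(\Omega)}$, and the decisive structural fact is that $\chi_{n}$ is locally constant on $\qn$ and on $\rn$, so $\nabla\chi_{n}$ is supported in $\Omega\setminus(\qn\cup\rn)$, where $\gamma_{n}=\gamma_{0}\ge\lambda_{0}I_{d}$. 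Hence $\|\chi_{n}\nabla w_{n}\|_{L^{2}(\Omega)}^{2}\le\lambda_{0}^{-1}\int_{\supp\chi_{n}}\gamma_{n}|\nabla w_{n}|^{2}\le\lambda_{0}^{-1}E(w_{n})\lesssim\|\dn\|_{L^{1}(\Omega)}\|\nabla u_{0}\|_{L^{\infty}(K)}^{2}$, while $\|w_{n}\nabla\chi_{n}\|_{L^{2}(\Omega)}\le\|\nabla\chi_{n}\|_{L^{\infty}(\Omega)}\|w_{n}\|_{L^{2}(\Omega)}\lesssim\|\dn\|_{L^{1}(\Omega)}^{\tau^{\prime}/2-\tau}Z$, and likewise for $w_{n}^{i}$; since $\tau<\frac{1}{d-1}$ one may take $\tau^{\prime}$ close enough to $\frac{d}{d-1}$ that $\frac{\tau^{\prime}}{2}-\tau>0$, so both endpoints are positive powers of $\|\dn\|_{L^{1}(\Omega)}$ times $Z$, and interpolating gives the local estimate with some $\beta=\beta(p,d,\tau,\tau^{\prime})>0$.

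The real obstacle is to push $\beta$ past $\frac{1}{2}$: the interpolation weight between $L^{2}$ and $L^{2^{*}}$ degenerates as $p\downarrow\frac{d}{2}$, so the gain of \lemref{L2bound} by itself is not enough and the separation $d(\qn,\rn)>\|\dn\|_{L^{1}(\qn)}^{\tau}$ has to be exploited quantitatively. I would do this with a \emph{localized} Aubin--Nitsche argument modelled on \lemref{L2bound}: solve $-\divx(\gamma_{0}\nabla\psi_{n})=|w_{n}|^{q-2}w_{n}\mathbf{1}_{\qn}$, $\psi_{n}=0$ on $\partial\Omega$, so that $\|w_{n}\|_{L^{q}(\qn)}^{q}=\int_{\qn\cup\rn}(\gamma_{0}-\gamma_{n})(\nabla w_{n}+\nabla u_{0})\cdot\nabla\psi_{n}$; the $\qn$-part is controlled, as in \lemref{L2bound}, by $\|\dn\|_{L^{1}(\Omega)}^{1/2}\|\dn\|_{L^{p}(\qn)}^{1/2}\|\nabla\psi_{n}\|_{L^{q}(\Omega)}Z$, and for the $\rn$-part one uses that $\psi_{n}$ is $\gamma_{0}$-harmonic at distance $\ge\frac{1}{2}\|\dn\|_{L^{1}(\qn)}^{\tau}$ from $\rn$, so interior gradient estimates bound $\|\nabla\psi_{n}\|_{L^{\infty}(\rn)}$ by a negative power of $\|\dn\|_{L^{1}(\qn)}^{\tau}$ times $\|\psi_{n}\|_{L^{2}(\Omega)}\lesssim\|w_{n}\|_{L^{q}(\qn)}^{q-1}$; absorbing the $\|w_{n}\|_{L^{q}(\qn)}^{q-1}$ factor and recalling $\|\dn\|_{L^{1}(\qn)}\le\|\dn\|_{L^{1}(\Omega)}$ then closes the bound for $w_{n}$, and symmetrically for $w_{n}^{i}$ with $\nabla u_{0}$ replaced by $\mathbf{e}_{i}$.

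What is left, and is the genuinely delicate point, is the exponent bookkeeping tying all of this together: one has to balance the power $\frac{\tau^{\prime}}{2}$ produced by \lemref{L2bound}, the interpolation exponent fixed by $q<2^{*}$ (equivalently $p>\frac{d}{2}$), and the loss coming from $\|\nabla\chi_{n}\|_{L^{\infty}(\Omega)}$ or from the interior decay rate at distance $\|\dn\|_{L^{1}(\qn)}^{\tau}$, and to check that $\tau<\frac{1}{d-1}$ is precisely the threshold at which the net power of $\|\dn\|_{L^{1}(\Omega)}$ in the $\qn$-contribution to $\|r_{n}\|_{L^{1}(\Omega)}$ becomes $1+\eta$ with $\eta>0$; this is also where the dependence of $C$ and $\eta$ on $p$ and $\tau$ is generated. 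I expect this balancing, rather than any individual estimate, to be the technical heart of the proof.
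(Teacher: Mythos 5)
Your reduction of the statement to the identity of \propref{remainderestimate} plus a bound on the $\qn$-contribution is fine, and your observation that $\nabla\chi_{n}$ is supported where $\gamma_{n}=\gamma_{0}$ while $\chi_{n}$ kills $\rn$ is indeed the right structural use of the hypothesis. But the core of the proposition is not established. By pairing the flux bound \eqref{lpboundflux} in $L^{\frac{2p}{p+1}}(\qn)$ with $w_{n}$, $w_{n}^{i}$ in $L^{q}(\qn)$, $q=\frac{2p}{p-1}$, you have made the target strictly harder than necessary: you now need $\left\Vert w_{n}\right\Vert _{L^{q}(\qn)}\lesssim\left\Vert \dn\right\Vert _{L^{1}(\Omega)}^{\beta}Z$ with $\beta>\frac{1}{2}$, and, as you concede, the interpolation between \lemref{L2bound} and the Sobolev endpoint obtained through $\chi_{n}$ cannot reach $\beta>\frac{1}{2}$ when $p$ is close to $\frac{d}{2}$ (where $q$ approaches $2^{*}$). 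The localized Aubin--Nitsche argument you propose to bridge this does not close as sketched: (i) the dual right-hand side $|w_{n}|^{q-2}w_{n}\mathbf{1}_{\qn}$ lies in $L^{q'}$, and elliptic regularity plus Sobolev embedding give $\nabla\psi_{n}$ only in $L^{s}$ with $\frac{1}{s}=\frac{1}{q'}-\frac{1}{d}$, i.e.\ $s$ near $2$ when $q$ is near $2^{*}$, whereas your $\qn$-term requires $\left\Vert \nabla\psi_{n}\right\Vert _{L^{q}(\qn)}$; smallness of $|\qn|$ cannot convert an $L^{s}$ bound into an $L^{q}$ bound for $s<q$, and upgrading the integrability of the right-hand side presupposes exactly the control of $w_{n}$ on $\qn$ you are trying to prove; (ii) even granting that regularity, absorbing $\left\Vert w_{n}\right\Vert _{L^{q}(\qn)}^{q-1}$ yields at best $\beta=\frac{1}{2}$, and you do not identify where the additional positive power of $\left\Vert \dn\right\Vert _{L^{1}(\Omega)}$ in the $\qn$-part would come from; (iii) for the $\rn$-part, the interior-estimate loss at distance $d(\qn,\rn)\gtrsim\left\Vert \dn\right\Vert _{L^{1}(\qn)}^{\tau}$ costs roughly $\left\Vert \dn\right\Vert ^{-\tau(1+\frac{d}{2})}$, which would force $\tau$ well below the stated threshold $\frac{1}{d-1}$. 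Since you explicitly defer the exponent bookkeeping, the quantitative content of the proposition --- precisely the ranges $p>\frac{d}{2}$ and $\tau<\frac{1}{d-1}$ --- remains unproved.

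For comparison, the paper avoids the $\beta>\frac{1}{2}$ trap altogether: it pairs the flux in $L^{1}(\qn)$ (estimate \eqref{lpboundflux} with $p=1$) with $w_{n}$, $w_{n}^{i}$ in $L^{\infty}(\qn)$, which is estimate \eqref{epsilon1linf} of \propref{remainderestimate}; this only requires $\left\Vert w_{n}\right\Vert _{L^{\infty}(\qn)}\lesssim\left\Vert \dn\right\Vert _{L^{1}(\Omega)}^{\eta}Z$ for some arbitrarily small $\eta>0$. That bound is the content of \lemref{Linfty2pD}, proved by Stampacchia truncation with the test function $\chi_{n}^{2}\left(w_{n}-G_{k}(w_{n})\right)$ (the same cut-off mechanism you identified), combined with the $L^{2p}$ bound \eqref{Lsbound} and an interpolation of $\left\Vert \dn\right\Vert _{L^{p_{1}}(\qn)}$ between $L^{1}(\qn)$ and $L^{p_{0}}(\qn)$, which is what converts the hypothesis $p_{0}>\frac{d}{2}$ and the cut-off condition into the positive power $\eta$. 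If you wish to salvage your route, the realistic fix is to prove such an $L^{\infty}(\qn)$ estimate (or invoke the paper's) rather than an $L^{q}(\qn)$ bound with exponent strictly above $\frac{1}{2}$.
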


\begin{proof}
This a direct consequence of estimate \eqref{epsilon1linf} in \propref{remainderestimate}
and \lemref{Linfty2pD}.
\end{proof}
\begin{lem}
\label{lem:Linfty2pD} If for some $p>\frac{d}{2}$, 
\[
\limsup_{n}\left\Vert \dn\right\Vert _{L^{p}\left(\qn\right)}^{\frac{1}{2}}<\infty
\]
and if there exists a sequence of function $\left(\chi_{n}\right)_{n\in\mathbb{N}}\in\left(W^{1,\infty}\left(\Omega;[0,1]\right)\right)^{\mathbb{N}}$
such that $\chi_{n}\equiv0$ on $\rn$, $\chi_{n}=1$ on $\qn$ and
\[
\left\Vert \dn\right\Vert _{L^{1}\left(\Omega\right)}^{\tau}\left\Vert \nabla\chi_{n}\right\Vert _{L^{\infty}\left(\Omega\right)}<\infty,
\]
for some $\tau<\frac{1}{\left(d-1\right)}$ then there exists $\eta>0$
depending on $p$ and $\tau$ only such that 
\[
\left\Vert w_{n}\right\Vert _{L^{\infty}\left(\qn\right)}\leq C\left\Vert \dn\right\Vert _{L^{1}\left(\Omega\right)}^{\eta}\left(\left\Vert \nabla u_{0}\right\Vert _{L^{\infty}(K)}+\left\Vert u_{0}\right\Vert _{L^{\infty}\left(\partial\Omega^{\prime}\right)}\right),
\]
where $C$ depends on $K,\Omega$, $\Lambda_{0}$, $\lambda_{0}$
, $\left\Vert \gamma_{0}\right\Vert _{W^{2,d}\left(\Omega\right)}$,
$p$ and $\tau$ only.
\begin{proof}
We apply Stampacchia's truncation method \citep{stampacchia}. We
denote $u\to G_{k}(u)$ to be the truncation operator, i.e $G_{k}(u)=\begin{cases}
u & |u|\leq k\\
k & u>k\\
-k & u<-k
\end{cases}$ with $k>0$, and we write $\text{\ensuremath{m_{k}}}=\left\{ x\in\Omega\,:\,\left|u_{n}\right|>k\right\} $.
We test equation (\ref{eq:defwn0}) against $\chi_{n}^{2}v_{n}$,
with $v_{n}=w_{n}-G_{k}\left(w_{n}\right)$, and obtain 
\begin{align*}
 & \int_{\Omega}\gamma_{n}\nabla w_{n}\cdot\nabla\left(\chi_{n}^{2}v_{n}\right)\text{d}x\\
= & \int_{\Omega}\gamma_{n}\nabla\left(\chi_{n}v_{n}\right)\cdot\nabla\left(\chi_{n}v_{n}\right)\text{d}x-\int_{\Omega}\gamma_{n}\nabla\chi_{n}\cdot\nabla\chi_{n}v_{n}^{2}\text{d}x\\
= & \int_{\Omega}\chi_{n}\left(\gamma_{0}-\gamma_{n}\right)\nabla u_{0}\cdot\nabla\left(\chi_{n}v_{n}\right)\text{d}x+\int_{\Omega}\chi_{n}v_{n}\left(\gamma_{0}-\gamma_{n}\right)\nabla u_{0}\cdot\nabla\chi\text{d}x
\end{align*}
Write $\gamma_{n}^{+}=\max\left(\gamma_{n},\gamma_{0}\right)$. Since
$\chi\equiv0$ on $\rn$, and $\nabla\chi$ is supported on $\Omega\setminus\left(\qn\cup\rn\right)$
and $v_{n}$ is supported on $m_{k}$, we may simplify the above identity
to 
\[
\int_{\Omega}\gamma_{n}^{+}\nabla\left(\chi v_{n}\right)\cdot\nabla\left(\chi v_{n}\right)\text{d}x=\int_{m_{k}}\gamma_{0}\nabla\chi_{n}\cdot\nabla\chi_{n}v_{n}^{2}\text{d}x-\int_{m_{k}}\left(\gamma_{0}-\gamma_{n}^{+}\right)\nabla u_{0}\cdot\nabla\left(\chi_{n}v_{n}\right)\text{d}x
\]
Using Cauchy-Schwarz, we find 
\[
\left|\int_{m_{k}}\left(\gamma_{0}-\gamma_{n}^{+}\right)\nabla u_{0}\cdot\nabla\left(\chi_{n}v_{n}\right)\text{d}x\right|\leq\left(\int_{m_{k}}\dn\nabla u_{0}\cdot\nabla u_{0}\text{d}x\right)^{\frac{1}{2}}\left(\int_{\Omega}\gamma_{n}^{+}\nabla\chi_{n}\cdot\nabla\chi_{n}v_{n}^{2}\text{d}x\right)^{\frac{1}{2}},
\]
which shows that 
\[
\int_{\Omega}\lambda_{0}\left|\nabla\left(\chi v_{n}\right)\right|^{2}\text{d}x\leq\int_{\Omega}\gamma_{n}^{+}\nabla\left(\chi v_{n}\right)\cdot\nabla\left(\chi v_{n}\right)\text{d}x\leq2\left(\int_{m_{k}}\dn^{+}\nabla u_{0}\cdot\nabla u_{0}\text{d}x+\int_{m_{k}}\Lambda_{0}\left|\nabla\chi_{n}\right|^{2}v_{n}^{2}\text{d}x\right).
\]
For any $p>\frac{d}{2}$ we write using Hölder's inequality and the
fact that $\left|v_{n}\right|\leq\left|w_{n}\right|$
\begin{align*}
 & \int_{m_{k}}\dn^{+}\nabla u_{0}\cdot\nabla u_{0}\text{d}x+\int_{m_{k}}\gamma_{0}\nabla\chi_{n}\cdot\nabla\chi_{n}v_{n}^{2}\text{d}x\\
\leq & \left\Vert d_{n}\right\Vert _{L^{p}\left(\qn\right)}\left\Vert \nabla u_{0}\right\Vert _{L^{\infty}\left(K\right)}^{2}\left|m_{k}\right|^{1-\frac{1}{p}}+\left\Vert w_{n}\right\Vert _{L^{2p}\left(\Omega\right)}^{2}\Lambda_{0}\left\Vert \nabla\chi_{n}\right\Vert _{L^{\infty}\left(\Omega\right)}^{2}\left|m_{k}\right|^{\frac{p-1}{p}},
\end{align*}
Whereas for any $h>k$, thanks to the Sobolev embedding $H^{1}\left(\Omega\right)\hookrightarrow L^{q}\left(\Omega\right)$
for $q=\left(\frac{p}{p-1}+\frac{d}{d-2}\right)$ if $d>2$ and $q=\frac{2p}{p-1}+1$
if $d=2$, 
\[
\lambda_{\grave{a}}C\left(s,\Omega\right)\left|k-h\right|^{2}\left|m_{h}\right|^{\frac{2}{q}}<\lambda_{q}C\left(s,\Omega\right)\left\Vert \chi v_{n}\right\Vert _{L^{3+\frac{2}{s}}\left(m_{k}\right)}^{2}<\int_{\Omega}\lambda_{0}\left|\nabla\left(\chi v_{n}\right)\right|^{2}\text{d}x.
\]
This shows that $m_{k}=0$, for $k$ large enough, that is, 
\[
\left\Vert \chi_{n}w_{n}\right\Vert _{L^{\infty}\left(\Omega\right)}\leq C\left(\left\Vert d_{n}\right\Vert _{L^{p}\left(\qn\right)}^{\frac{1}{2}}\left\Vert \nabla u_{0}\right\Vert _{L^{\infty}\left(K\right)}+\left\Vert w_{n}\right\Vert _{L^{2p}\left(\Omega\right)}\left\Vert \nabla\chi_{n}\right\Vert _{L^{\infty}\left(\Omega\right)}\right),
\]
$C>0$ depends on $s$, $K,\Omega$, $\Lambda_{0}$ and $\lambda_{0}$
only. Thanks to estimate \eqref{Lsbound}, for any $\zeta\in\left[1,\frac{1}{\left(d-1\right)}\right)$
there holds
\[
\left\Vert w_{n}\right\Vert _{L^{2p}\left(\Omega\right)}\leq C\left\Vert \dn\right\Vert _{L^{1}(\Omega)}^{\frac{d\zeta}{2p}}\left(\left\Vert \nabla u_{0}\right\Vert _{L^{\infty}(K)}+\left\Vert u_{0}\right\Vert _{L^{\infty}\left(\partial\Omega^{\prime}\right)}\right),
\]
where $C$ depends on $\eta$, $\Omega^{\prime},$ $K,\Omega$, $\Lambda_{0}$
and $\lambda_{0}$ and $\left\Vert \gamma_{0}\right\Vert _{W^{2,d}\left(\Omega\right)}$.
Altogether, 
\begin{equation}
\left\Vert w_{n}\right\Vert _{L^{\infty}\left(\qn\right)}\leq C\left(\left\Vert d_{n}\right\Vert _{L^{p}\left(\qn\right)}^{\frac{1}{2}}+\left\Vert \dn\right\Vert _{L^{1}(\Omega)}^{\zeta}\left\Vert \nabla\chi_{n}\right\Vert _{L^{\infty}\left(\Omega\right)}\right)\left(\left\Vert \nabla u_{0}\right\Vert _{L^{\infty}(K)}+\left\Vert u_{0}\right\Vert _{L^{\infty}\left(\partial\Omega^{\prime}\right)}\right).\label{eq:boundLpq}
\end{equation}
Now, given $\tau<\frac{1}{d-1}$ and $p_{0}>\frac{d}{2}$ such that
\[
\limsup\left\Vert \dn\right\Vert _{L^{1}(\Omega)}^{\tau}\left\Vert \nabla\chi_{n}\right\Vert _{L^{\infty}\left(\Omega\right)}+\limsup\left\Vert d_{n}\right\Vert _{L^{p_{0}}\left(\qn\right)}<\infty,
\]
write
\[
\kappa=\sup_{n}\left\Vert \dn\right\Vert _{L^{1}(\Omega)}^{\tau}\left\Vert \nabla\chi_{n}\right\Vert _{L^{\infty}\left(\Omega\right)}+\left\Vert d_{n}\right\Vert _{L^{p_{0}}\left(\qn\right)},
\]
 and $p_{1}=\frac{1}{2}\min\left(\frac{d}{2}\frac{1}{\tau\left(d-1\right)},p_{0}\right)+\frac{d}{4}.$
By interpolation between $L^{1}\left(A_{n}\right)$ and $L^{p_{0}}\left(A_{n}\right)$
we have 
\[
\left\Vert d_{n}\right\Vert _{L^{p_{1}}\left(\qn\right)}^{\frac{1}{2}}\leq\left\Vert d_{n}\right\Vert _{L^{1}\left(\qn\right)}^{\theta_{1}}\kappa^{\frac{1}{2}-\theta_{1}},
\]
with $\theta_{1}=\frac{p_{0}-p_{1}}{2p_{1}\left(p_{0}-1\right)}>0$
and 
\[
\left\Vert \dn\right\Vert _{L^{1}(\Omega)}^{\frac{d\tau}{2p_{1}}}\left\Vert \nabla\chi_{n}\right\Vert _{L^{\infty}\left(\Omega\right)}\leq\left\Vert \dn\right\Vert ^{\theta_{2}}\kappa,
\]
with 
\[
\theta_{2}=\left(\frac{d}{2p_{1}}-1\right)\tau>0.
\]
Estimate \eqref{boundLpq} with $p=p_{1}$ and $\zeta=\tau$ concludes
the proof, with $\eta=\min\left(\theta_{1},\theta_{2}\right).$
\end{proof}
\end{lem}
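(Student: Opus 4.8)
The plan is to bound $\left\Vert w_{n}\right\Vert _{L^{\infty}(\qn)}$ by a Stampacchia truncation argument localised to $\qn$ through the cut-off $\chi_{n}$. The point of $\chi_{n}$ is that it vanishes on $\rn$, where $\gamma_{n}$ may be arbitrarily degenerate, while $\gamma_{n}=\gamma_{0}\geq\lambda_{0}I_{d}$ where $\nabla\chi_{n}\neq0$ (that is, on $\Omega\setminus(\qn\cup\rn)$) and $\gamma_{n}\geq\gamma_{0}\geq\lambda_{0}I_{d}$ on the remainder of $\{\chi_{n}\neq0\}$; so the localised problem is uniformly elliptic and the terms carrying $(\gamma_{0}-\gamma_{n})\nabla\chi_{n}$ drop out. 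Concretely, for $k>0$ I would set $v_{n}=w_{n}-G_{k}(w_{n})$ so that $\nabla v_{n}=\mathbf{1}_{\{|w_{n}|>k\}}\nabla w_{n}$, test \eqref{defwn0} with $\phi=\chi_{n}^{2}v_{n}$, expand $\nabla(\chi_{n}^{2}v_{n})$ and rearrange into a Caccioppoli-type identity
\[
\int_{\Omega}\gamma_{n}\nabla(\chi_{n}v_{n})\cdot\nabla(\chi_{n}v_{n})\,\text{d}x=\int_{m_{k}}\gamma_{0}\nabla\chi_{n}\cdot\nabla\chi_{n}\,v_{n}^{2}\,\text{d}x-\int_{m_{k}\cap\qn}(\gamma_{0}-\gamma_{n})\nabla u_{0}\cdot\nabla(\chi_{n}v_{n})\,\text{d}x,
\]
where $m_{k}=\{|w_{n}|>k\}$.

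Next I would estimate the right-hand side. The second integral is handled by Cauchy--Schwarz in the weight $\gamma_{n}$, using $(\gamma_{n}-\gamma_{0})\gamma_{n}^{-1}(\gamma_{n}-\gamma_{0})\leq\dn$ from \eqref{notlowalpha}: the factor $\big(\int_{\Omega}\gamma_{n}\nabla(\chi_{n}v_{n})\cdot\nabla(\chi_{n}v_{n})\big)^{1/2}$ is absorbed on the left by Young's inequality, leaving $\big(\int_{m_{k}\cap\qn}\dn\nabla u_{0}\cdot\nabla u_{0}\big)^{1/2}\leq\left\Vert \dn\right\Vert _{L^{p}(\qn)}^{1/2}\left\Vert \nabla u_{0}\right\Vert _{L^{\infty}(K)}|m_{k}|^{\frac{1}{2}(1-1/p)}$ by H\"older; the $\nabla\chi_{n}$ term is $\leq\Lambda_{0}\left\Vert \nabla\chi_{n}\right\Vert _{L^{\infty}(\Omega)}^{2}\left\Vert w_{n}\right\Vert _{L^{2p}(\Omega)}^{2}|m_{k}|^{1-1/p}$, again by H\"older and $|v_{n}|\leq|w_{n}|$. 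Coercivity on the left, the Sobolev embedding $H_{0}^{1}(\Omega)\hookrightarrow L^{q}(\Omega)$ for a suitable $q$ (any $q\in\big(\frac{2p}{p-1},\frac{2d}{d-2}\big]$ if $d\geq3$, any finite $q$ if $d=2$), and the fact that $\chi_{n}\equiv1$ on $\qn$ (so $\{|w_{n}|>h\}\cap\qn\subseteq\{|\chi_{n}v_{n}|>h-k\}$ for $h>k$) then yield a recursion $|m_{h}\cap\qn|\leq C(h-k)^{-q}\big(A\,|m_{k}\cap\qn|^{1-1/p}\big)^{q/2}$ with $A=C\big(\left\Vert \dn\right\Vert _{L^{p}(\qn)}\left\Vert \nabla u_{0}\right\Vert _{L^{\infty}(K)}^{2}+\left\Vert \nabla\chi_{n}\right\Vert _{L^{\infty}(\Omega)}^{2}\left\Vert w_{n}\right\Vert _{L^{2p}(\Omega)}^{2}\big)$. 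The assumption $p>d/2$ is exactly what permits choosing $q$ with $\beta:=\frac{1}{2}(1-1/p)q>1$, and then Stampacchia's lemma \citep{stampacchia} gives $|m_{k_{0}}\cap\qn|=0$ with $k_{0}^{q}\simeq A^{q/2}|\qn|^{\beta-1}\simeq A^{q/2}$, that is,
\[
\left\Vert w_{n}\right\Vert _{L^{\infty}(\qn)}\lesssim\left\Vert \dn\right\Vert _{L^{p}(\qn)}^{1/2}\left\Vert \nabla u_{0}\right\Vert _{L^{\infty}(K)}+\left\Vert \nabla\chi_{n}\right\Vert _{L^{\infty}(\Omega)}\left\Vert w_{n}\right\Vert _{L^{2p}(\Omega)}.
\]

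The remaining point — the main obstacle — is to extract from this bound a strictly positive power of $\left\Vert \dn\right\Vert _{L^{1}(\Omega)}$, bearing in mind that $\left\Vert \dn\right\Vert _{L^{p}(\qn)}$ is only known to be bounded (for the given exponent $p_{0}>d/2$) and $\left\Vert \nabla\chi_{n}\right\Vert _{L^{\infty}(\Omega)}$ may blow up like $\left\Vert \dn\right\Vert _{L^{1}(\Omega)}^{-\tau}$. For the second term I would use the interior estimate \eqref{Lsbound}, $\left\Vert w_{n}\right\Vert _{L^{2p}(\Omega)}\lesssim\left\Vert \dn\right\Vert _{L^{1}(\Omega)}^{\tau^{\prime}/(2p)}\big(\left\Vert \nabla u_{0}\right\Vert _{L^{\infty}(K)}+\left\Vert u_{0}\right\Vert _{L^{\infty}(\partial\Omega^{\prime})}\big)$ for any $\tau^{\prime}<d/(d-1)$, so this term is $O\big(\left\Vert \dn\right\Vert _{L^{1}(\Omega)}^{-\tau+\tau^{\prime}/(2p)}\big)$; for the first term I would interpolate $\left\Vert \dn\right\Vert _{L^{p}(\qn)}\leq\left\Vert \dn\right\Vert _{L^{1}(\qn)}^{\theta}\left\Vert \dn\right\Vert _{L^{p_{0}}(\qn)}^{1-\theta}$ with $\theta=\theta(p,p_{0})>0$, valid for any $p\in(d/2,p_{0})$, so this term is $O\big(\left\Vert \dn\right\Vert _{L^{1}(\Omega)}^{\theta/2}\big)$. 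Since $\tau<\frac{1}{d-1}$ one has $\frac{d}{2\tau(d-1)}>\frac{d}{2}$, so the interval $\big(\frac d2,\min(p_{0},\frac{d}{2\tau(d-1)})\big)$ is nonempty; choosing $p$ there and $\tau^{\prime}$ close enough to $d/(d-1)$ makes both $-\tau+\tau^{\prime}/(2p)$ and $\theta/2$ strictly positive, and one concludes with $\eta=\min\big(-\tau+\tau^{\prime}/(2p),\theta/2\big)>0$, depending only on $p$ and $\tau$. The bookkeeping — arranging the Sobolev exponent $q$, the Stampacchia exponent $\beta$, and these two interpolation gains to hold simultaneously and uniformly for $d=2$ and $d\geq3$ — is the delicate part; the truncation identity and the iteration itself are standard.
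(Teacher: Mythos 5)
Your proposal is correct and follows essentially the same route as the paper's proof: the same Stampacchia truncation with test function $\chi_{n}^{2}v_{n}$, the same Caccioppoli--H\"older--Sobolev chain leading to $\left\Vert w_{n}\right\Vert _{L^{\infty}\left(A_{n}\right)}\lesssim\left\Vert d_{n}\right\Vert _{L^{p}\left(A_{n}\right)}^{\frac{1}{2}}\left\Vert \nabla u_{0}\right\Vert _{L^{\infty}(K)}+\left\Vert \nabla\chi_{n}\right\Vert _{L^{\infty}\left(\Omega\right)}\left\Vert w_{n}\right\Vert _{L^{2p}\left(\Omega\right)}$, and the same final step combining interpolation of $d_{n}$ between $L^{1}$ and $L^{p_{0}}$ with the bound~(\ref{eq:Lsbound}) for $w_{n}$, using $\tau<\frac{1}{d-1}$ to pick an intermediate exponent $p\in\left(\frac{d}{2},\min\left(p_{0},\frac{d}{2\tau\left(d-1\right)}\right)\right)$ and obtain a positive power $\eta$. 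If anything, your exponent bookkeeping at the end is stated more transparently than the paper's choice of $p_{1}$, $\theta_{1}$, $\theta_{2}$.
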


\section{\label{sec:Properties}Properties of the polarisation tensor $M$}

Thanks to \lemref{boundaryindep}, we may consider alternative definitions
for the tensor $M.$ The most convenient is the periodic one, namely,
embedding $\Omega$ in a large cube $C$, we set 
\[
H_{\#}^{1}(C):=\left\{ \phi\in H_{\text{loc}}^{1}\left(\mathbb{R}^{d}\right)\,:\,\int_{C\setminus K}\phi\,\text{d}x=0\text{ and }\phi\quad C-\text{periodic}\right\} ,
\]
and $M_{ij}=D_{ij}-W_{ij}\in L^{2}\left(\Omega,d\mu\right)$ is the
scalar weak$^{*}$ limit of $\frac{1}{\|\dn\|_{L^{1}(\Omega)}}\left(\left(\nabla w_{n}^{i}+\mathbf{e}_{i}\right)\cdot\left(\gamma_{n}-\gamma_{0}\right)\mathbf{e}_{j}\right),$
where $w_{n}^{i}$ is be the unique weak solution to 
\begin{equation}
\int_{Q}\gamma_{n}\nabla w_{n}^{i}\cdot\nabla\phi\,\text{d}x=\int_{Q}\left(\gamma_{0}-\gamma_{n}\right)\mathbf{e}_{j}\cdot\nabla\phi\,\text{d}x\text{ for all }\phi\in H_{\#}^{1}(C).\label{eq:varfom-1}
\end{equation}
In \citep{capdeboscq-vogelius-03a} another version $\mathbb{M}$
of this tensor is introduced, and $M$ a natural extension to this
context.

Assuming $\gamma_{n}=\left(\left(\gamma_{1}-\gamma_{0}\right)1_{\qn\cup\rn}+\gamma_{0}\right)I_{d}$
for some regular functions $\gamma_{1}$ and $\gamma_{0}$, then the
tensor $\mathbb{M}$ introduced in \citep{capdeboscq-vogelius-03a}
is defined as the weak$^{*}$ limit in $L^{2}\left(\Omega,d\mu\right)$
of 
\[
\frac{1}{\left|\qn\cup\rn\right|}\left(\nabla w_{n}^{i}+\mathbf{e}_{i}\right)\cdot\mathbf{e}_{j}
\]
To compare both formulas, suppose $\gamma_{1}$ and $\gamma_{0}$
are constant. Then 
\[
\frac{1}{\|\dn\|_{L^{1}(\Omega)}}\left(\left(\nabla w_{n}^{i}+\mathbf{e}_{i}\right)\cdot\left(\gamma_{n}-\gamma_{0}\right)\mathbf{e}_{j}\right)=\frac{1}{\left|\qn\cup\rn\right|}\frac{1}{\sqrt{d}}\frac{\gamma_{1}}{\gamma_{1}^{2}+\gamma_{0}^{2}}\left(\gamma_{1}-\gamma_{0}\right)\left(\nabla w_{n}^{i}+\mathbf{e}_{i}\right)\cdot\mathbf{e}_{j},
\]
thus the two tensors are related by the simple fomula
\begin{equation}
M=\frac{1}{\sqrt{d}}\frac{\gamma_{1}}{\gamma_{1}^{2}+\gamma_{0}^{2}}\left(\gamma_{0}-\gamma_{1}\right)\mathbb{M},\label{eq:CVvsCO}
\end{equation}
and most properties can be read directly from \citep{capdeboscq-vogelius-06}
with the appropriate changes.
\begin{lem}[{\citep[theorem 1]{capdeboscq-vogelius-03a}}]
The entries of the polarisation tensor $M$ satisfies $M_{ij}=M_{ji}$
$\mu$-almost everywhere in $\Omega.$
\end{lem}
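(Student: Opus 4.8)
The plan is to read the symmetry off from \propref{Polar-Outline}: that proposition already supplies the localized identity we need, and the only extra ingredient is that the matrices $\gamma_{n}-\gamma_{0}$ are symmetric. Recall from \lemref{defmu} and \defref{cndn-1} that $M_{ij}=D_{ij}-W_{ij}\in L^{2}(\Omega,\text{d}\mu)$ is characterised, for every $\phi\in C^{1}(\overline{\Omega})$, by
\[
\int_{\Omega}M_{ij}\phi\,\text{d}\mu=\lim_{n\to\infty}\frac{1}{\|\dn\|_{L^{1}(\Omega)}}\int_{\Omega}\Big((\gamma_{n}-\gamma_{0})_{ij}+\nabla w_{n}^{i}\cdot(\gamma_{n}-\gamma_{0})\mathbf{e}_{j}\Big)\phi\,\text{d}x,
\]
the limit being taken along the subsequence fixed in \thmref{main}. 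Since $C^{1}(\overline{\Omega})$ is dense in $L^{2}(\Omega,\text{d}\mu)$ (the measure $\mu$ being a finite Radon measure on the compact set $\overline{\Omega}$), it suffices to show that the right-hand side is unchanged under $i\leftrightarrow j$.

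The diagonal term is symmetric for free: $\gamma_{n}$ and $\gamma_{0}$ being symmetric matrices, $(\gamma_{n}-\gamma_{0})_{ij}=(\gamma_{n}-\gamma_{0})_{ji}$ pointwise. For the second term I would apply \propref{Polar-Outline} with the affine function $x_{j}$ in the role of $u_{0}$; then $\nabla u_{0}=\mathbf{e}_{j}$, the solution of \eqref{defwn0} is precisely the corrector $w_{n}^{j}$ of \eqref{defwn0-1-1}, and $\|\nabla u_{0}\|_{L^{\infty}(K)}=1$, $\|u_{0}\|_{L^{\infty}(\partial\Omega^{\prime})}=\|x_{j}\|_{L^{\infty}(\partial\Omega^{\prime})}$ are fixed constants. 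Thus \propref{Polar-Outline} gives
\[
\int_{\Omega}\big((\gamma_{n}-\gamma_{0})\nabla w_{n}^{j}\cdot\nabla x_{i}\big)\phi\,\text{d}x=\int_{\Omega}\big((\gamma_{n}-\gamma_{0})\nabla w_{n}^{i}\cdot\nabla x_{j}\big)\phi\,\text{d}x+\int_{\Omega}r_{n}\cdot\nabla\phi\,\text{d}x,
\]
with $\|r_{n}\|_{L^{1}(\Omega)}\leq C\|\dn\|_{L^{1}(\Omega)}^{1+\eta}$. Using the symmetry of $\gamma_{n}-\gamma_{0}$ to identify $(\gamma_{n}-\gamma_{0})\nabla w_{n}^{j}\cdot\nabla x_{i}=\nabla w_{n}^{j}\cdot(\gamma_{n}-\gamma_{0})\mathbf{e}_{i}$ and $(\gamma_{n}-\gamma_{0})\nabla w_{n}^{i}\cdot\nabla x_{j}=\nabla w_{n}^{i}\cdot(\gamma_{n}-\gamma_{0})\mathbf{e}_{j}$, dividing by $\|\dn\|_{L^{1}(\Omega)}$, and letting $n\to\infty$ (the remainder is $O(\|\dn\|_{L^{1}(\Omega)}^{\eta}\|\nabla\phi\|_{L^{\infty}(\Omega)})\to0$), I obtain
\[
\lim_{n\to\infty}\frac{1}{\|\dn\|_{L^{1}(\Omega)}}\int_{\Omega}\nabla w_{n}^{i}\cdot(\gamma_{n}-\gamma_{0})\mathbf{e}_{j}\,\phi\,\text{d}x=\lim_{n\to\infty}\frac{1}{\|\dn\|_{L^{1}(\Omega)}}\int_{\Omega}\nabla w_{n}^{j}\cdot(\gamma_{n}-\gamma_{0})\mathbf{e}_{i}\,\phi\,\text{d}x.
\]
Together with the pointwise symmetry of $(\gamma_{n}-\gamma_{0})_{ij}$ this shows $\int_{\Omega}M_{ij}\phi\,\text{d}\mu=\int_{\Omega}M_{ji}\phi\,\text{d}\mu$ for all $\phi\in C^{1}(\overline{\Omega})$, hence $M_{ij}=M_{ji}$ $\mu$-almost everywhere.

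The only delicate point is invoking \propref{Polar-Outline} with a background field $x_{j}$ that is not $\gamma_{0}$-harmonic; this is harmless, because \eqref{defwn0} makes sense for any $u_{0}\in H^{1}(\Omega)$ and every a priori estimate entering the proof of \propref{Polar-Outline}---\lemref{L2bound} and its corollary, \lemref{Linftysimple}, and, when highly conductive inclusions are present, \lemref{Linfty2pD} and the flux bound \eqref{lpboundflux}---depends on $u_{0}$ only through $\|\nabla u_{0}\|_{L^{\infty}(K)}$ and $\|u_{0}\|_{L^{\infty}(\partial\Omega^{\prime})}$, which are $O(1)$ for $u_{0}=x_{j}$. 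Should one wish to bypass \propref{Polar-Outline} entirely, the same conclusion follows by testing \eqref{defwn0-1-1} for $w_{n}^{i}$ against $\phi w_{n}^{j}$, testing it for $w_{n}^{j}$ against $\phi w_{n}^{i}$, and subtracting: the symmetric term $\int_{\Omega}\phi\,\gamma_{n}\nabla w_{n}^{i}\cdot\nabla w_{n}^{j}\,\text{d}x$ cancels and one is left, on the right, with terms each carrying a factor $\nabla\phi$, which are shown to be $o(\|\dn\|_{L^{1}(\Omega)})$ by the arguments of \secref{ProofPolar}---this is where the alternatives of \enuref{extra} re-enter. I do not anticipate a genuine obstacle: modulo the routine check just mentioned, the symmetry of $M$ is nothing more than the symmetry of $\gamma_{n}-\gamma_{0}$ carried to the limit.
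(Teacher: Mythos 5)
The paper never proves this lemma in-text: it is quoted from \citep[theorem 1]{capdeboscq-vogelius-03a}, where the symmetry of the polarisation tensor is obtained (for bounded contrasts) by exactly the reciprocity device you use, i.e.\ testing the equation for $w_{n}^{i}$ against $\phi w_{n}^{j}$ and conversely, so that the symmetric energy term cancels. Your proposal is therefore a legitimate, essentially self-contained re-derivation inside the present degenerate framework rather than a citation: the identity you invoke is precisely the algebraic identity of \propref{remainderestimate} (whose derivation uses only the two weak formulations, not any $\gamma_{0}$-harmonicity of $u_{0}$) specialised to $u_{0}=x_{j}$, $w_{n}=w_{n}^{j}$, combined with the pointwise symmetry of $\gamma_{n}-\gamma_{0}$ for the $D$-part, the remainder bound $\left\Vert r_{n}\right\Vert _{L^{1}\left(\Omega\right)}\lesssim\left\Vert \dn\right\Vert _{L^{1}\left(\Omega\right)}^{1+\eta}$, and density of $C^{1}\big(\overline{\Omega}\big)$ in $L^{2}\left(\Omega,\text{d}\mu\right)$. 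What this buys over the bare citation is that it checks the symmetry actually survives unbounded contrast under \enuref{extra}, which is not literally covered by the quoted theorem.

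One caveat deserves more than the single sentence you give it. The claim that every estimate entering \propref{Polar-Outline} sees $u_{0}$ only through $\left\Vert \nabla u_{0}\right\Vert _{L^{\infty}(K)}$ and $\left\Vert u_{0}\right\Vert _{L^{\infty}\left(\partial\Omega^{\prime}\right)}$ is accurate for the branches \enuref{mixed} and \enuref{intertwined} (via \propref{remainderestimate}, \propref{insulatingpolar}, \propref{notwinning}), but not literally for the two-dimensional branch \enuref{2d}: the proof of \propref{polarstream} constructs stream functions for $\gamma_{n}\nabla\left(u_{n}-Fb_{n}\right)$ and $\gamma_{0}\nabla\left(u_{0}-Fb_{0}\right)$, hence uses that $u_{0}$ and $u_{n}$ solve the homogeneous equations. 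With $u_{0}=x_{j}$ one has $\divx\left(\gamma_{n}\nabla\left(x_{j}+w_{n}^{j}\right)\right)=\divx\left(\gamma_{0}\mathbf{e}_{j}\right)$, which need not vanish for a general $\gamma_{0}\in W^{2,d}$, so \lemref{BBH} does not apply verbatim; one must instead work with the divergence-free field $\gamma_{n}\nabla\left(x_{j}+w_{n}^{j}\right)-\gamma_{0}\mathbf{e}_{j}$, or deduce the symmetry of $M$ from that of the dual tensor in appendix~\appref{polarstream}. A similar remark applies to the comparison-principle step behind \eqref{linftybasic} when it is recycled for correctors, although there you are exactly as (im)precise as the paper itself, which already applies \lemref{L2bound} ``to $u_{0}=x_{i}$''. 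These are routine repairs, not a flaw in the mechanism, so the proof stands with that branch patched.
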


\begin{lem}[{See \citep[Lemma 4][]{capdeboscq-vogelius-06}}]
\label{lem:-ReussBounds} For every $\phi\in C_{c}^{1}\left(C\right),$
$\phi\geq0$, and every $\zeta\in\mathbb{R}^{d},$ there holds
\begin{align*}
\int_{\Omega}W\zeta\cdot\zeta\phi d\mu & =\frac{1}{\left\Vert \dn\right\Vert _{L^{1}\left(\Omega\right)}}\int_{\Omega}\dn^{\prime}\zeta\cdot\zeta\ensuremath{\phi}\text{\,d}x\\
 & -\frac{1}{\left\Vert \dn\right\Vert _{L^{1}\left(\Omega\right)}}\min_{u\in H_{\#}^{1}\left(C\right)^{d}}\int_{\Omega}\gamma_{n}\left(\nabla u-\gamma_{n}^{-1}\left(\gamma_{n}-\gamma_{0}\right)\zeta\right)\cdot\left(\nabla u-\gamma_{n}^{-1}\left(\gamma_{n}-\gamma_{0}\right)\zeta\right)\phi\text{d}x+o\left(1\right),
\end{align*}
with 
\[
d_{n}^{\prime}=\left(\gamma_{n}-\gamma_{0}\right)\gamma_{n}^{-1}\left(\gamma_{n}-\gamma_{0}\right)=d_{n}-2\gamma_{0}\geq0.
\]
In particular, the tensor $M$ is positive semi-definite and satisfies
\[
0\leq W\leq I_{d}\text{ \ensuremath{\mu}}\text{ a.e. in }\Omega.
\]
 If $\gamma_{n}$ and $\gamma_{0}$ are multiples of the identity
matrix, that is, the material is isotropic, then 
\[
0\leq W\leq\frac{1}{\sqrt{d}}I_{d}\text{ \ensuremath{\mu}}\text{ a.e. in }\Omega.
\]
\end{lem}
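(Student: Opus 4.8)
The plan is to transcribe the argument of \citep[Lemma 4]{capdeboscq-vogelius-06}, with $\|\dn\|_{L^{1}(\Omega)}$ playing the role of the inclusion volume. Fix $\zeta\in\mathbb{R}^{d}$ and $\phi\in C_{c}^{1}(C)$ with $\phi\geq0$. By \lemref{boundaryindep} I may compute $W$ from the periodic correctors of \defref{cndn-1}: with $W_{n}^{\zeta}:=\zeta_{i}w_{n}^{i}\in H_{\#}^{1}(C)$, the weak solution of $\int_{C}\gamma_{n}\nabla W_{n}^{\zeta}\cdot\nabla v\,\text{d}x=\int_{C}(\gamma_{0}-\gamma_{n})\zeta\cdot\nabla v\,\text{d}x$ for all $v\in H_{\#}^{1}(C)$, the measure $W\zeta\cdot\zeta\,\text{d}\mu$ is the weak$^{*}$ limit of $\|\dn\|_{L^{1}(\Omega)}^{-1}(\gamma_{0}-\gamma_{n})\zeta\cdot\nabla W_{n}^{\zeta}\,\text{d}x$. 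Set $a_{n}=\gamma_{n}^{-1}(\gamma_{n}-\gamma_{0})\zeta$, so that $\gamma_{n}a_{n}=(\gamma_{n}-\gamma_{0})\zeta$ and $\gamma_{n}a_{n}\cdot a_{n}=\dn^{\prime}\zeta\cdot\zeta$ ($\dn^{\prime}$ as in the statement), and let $u_{n}^{\ast}\in H_{\#}^{1}(C)$ realise $\min_{u\in H_{\#}^{1}(C)}J_{n}^{\phi}(u)$, where $J_{n}^{\phi}(u):=\int_{\Omega}\gamma_{n}(\nabla u-a_{n})\cdot(\nabla u-a_{n})\,\phi\,\text{d}x$; the minimum is attained, being the projection of $a_{n}$ onto the closed subspace of gradient fields in $L^{2}(\{\phi>0\};\gamma_{n}\phi\,\text{d}x)$.

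First I would do the elementary algebra: testing the Euler--Lagrange equation $\int_{\Omega}\gamma_{n}(\nabla u_{n}^{\ast}-a_{n})\cdot\nabla v\,\phi\,\text{d}x=0$ with $v=u_{n}^{\ast}$ gives $\int_{\Omega}\gamma_{n}\nabla u_{n}^{\ast}\cdot\nabla u_{n}^{\ast}\,\phi\,\text{d}x=\int_{\Omega}(\gamma_{n}-\gamma_{0})\zeta\cdot\nabla u_{n}^{\ast}\,\phi\,\text{d}x$, and expanding $J_{n}^{\phi}(u_{n}^{\ast})$ then yields
\[
\min_{u\in H_{\#}^{1}(C)}J_{n}^{\phi}(u)=\int_{\Omega}\dn^{\prime}\zeta\cdot\zeta\,\phi\,\text{d}x-\int_{\Omega}(\gamma_{n}-\gamma_{0})\zeta\cdot\nabla u_{n}^{\ast}\,\phi\,\text{d}x .
\]
So the claimed identity is equivalent to showing that $\|\dn\|_{L^{1}(\Omega)}^{-1}\int_{\Omega}(\gamma_{n}-\gamma_{0})\zeta\cdot\nabla u_{n}^{\ast}\,\phi\,\text{d}x\to\int_{\Omega}W\zeta\cdot\zeta\,\phi\,\text{d}\mu$.

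The hard part will be this identification. The function $-W_{n}^{\zeta}$ is the \emph{unweighted} counterpart of $u_{n}^{\ast}$ — it solves $\divx\big(\gamma_{n}(\nabla(-W_{n}^{\zeta})-a_{n})\big)=0$ — so one expects $\nabla u_{n}^{\ast}$ and $-\nabla W_{n}^{\zeta}$ to produce the same limit when tested against $(\gamma_{n}-\gamma_{0})\zeta\,\phi$. I would compare the two variational problems via the discrepancy $\rho_{n}=u_{n}^{\ast}+W_{n}^{\zeta}$, which satisfies $\int_{\Omega}\phi\,\gamma_{n}\nabla\rho_{n}\cdot\nabla v\,\text{d}x=-\int_{\Omega}\big(q_{n}\cdot\nabla\phi\big)\,v\,\text{d}x$ for all $v\in H_{\#}^{1}(C)$, where $q_{n}=\gamma_{n}\nabla W_{n}^{\zeta}+(\gamma_{n}-\gamma_{0})\zeta$ is the divergence-free flux of the corrector problem. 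The source thus lives where $\nabla\phi\neq0$ and is controlled — after dividing by $\|\dn\|_{L^{1}(\Omega)}$ — by the energy estimate \eqref{energyest} and the improved $L^{2}$, $L^{q}$ bounds of \lemref{L2bound} and its corollary (applied with $u_{0}=\zeta\cdot x$). Estimating $\int_{\Omega}(\gamma_{n}-\gamma_{0})\zeta\cdot\nabla\rho_{n}\,\phi\,\text{d}x$ then calls for exactly the machinery of \propref{remainderestimate} and \secref{ProofPolar}: the flux bound \eqref{lpboundflux} on $\qn$ together with, depending on which alternative of \enuref{extra} is in force, either the integrability bound $\|\dn\|_{L^{p}(\qn)}$ or the Stampacchia estimate $\|w_{n}^{i}\|_{L^{\infty}(\qn)}$ of \lemref{Linfty2pD}; the extra difficulty relative to \secref{ProofPolar} is that $\phi$ may vanish, so every estimate has to be localised to a neighbourhood of $K\cup\{\phi>0\}$. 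This is the step where \enuref{extra} is genuinely needed; granting it, and using \lemref{defmu}, the identity follows with an $o(1)$ remainder.

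The bounds on $W$ then follow by a sandwich argument. Since $0\leq\min_{u}J_{n}^{\phi}(u)\leq J_{n}^{\phi}(0)=\int_{\Omega}\dn^{\prime}\zeta\cdot\zeta\,\phi\,\text{d}x$, the identity gives $0\leq\int_{\Omega}W\zeta\cdot\zeta\,\phi\,\text{d}\mu\leq\limsup_{n}\|\dn\|_{L^{1}(\Omega)}^{-1}\int_{\Omega}\dn^{\prime}\zeta\cdot\zeta\,\phi\,\text{d}x$. By \eqref{notlowalpha}, $\dn^{\prime}$ is positive semidefinite with $|\dn^{\prime}|_{F}\leq|\dn|_{F}$, so $\dn^{\prime}\zeta\cdot\zeta\leq|\dn|_{F}|\zeta|^{2}$ and \lemref{defmu} gives $\int_{\Omega}W\zeta\cdot\zeta\,\phi\,\text{d}\mu\leq|\zeta|^{2}\int_{\Omega}\phi\,\text{d}\mu$; as $\phi\geq0$ is arbitrary, $0\leq W\leq I_{d}$ $\mu$-a.e. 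In the isotropic case $\gamma_{n}=g_{n}I_{d}$, $\gamma_{0}=g_{0}I_{d}$, the pointwise inequality sharpens on $\qn\cup\rn$ to $\dn^{\prime}\zeta\cdot\zeta=\tfrac{(g_{n}-g_{0})^{2}}{g_{n}}|\zeta|^{2}\leq\big(g_{n}+\tfrac{g_{0}^{2}}{g_{n}}\big)|\zeta|^{2}=\tfrac{1}{\sqrt{d}}|\dn|_{F}|\zeta|^{2}$, so $0\leq W\leq\tfrac{1}{\sqrt{d}}I_{d}$ $\mu$-a.e. Lastly, the positivity of $M=D-W$ is the ``Voigt'' companion of the ``Reuss'' identity just proved: it comes from running the same argument on the dual formulation of $J_{n}^{\phi}$ (maximisation over divergence-free fields), exactly as in \citep[Lemma 4]{capdeboscq-vogelius-06}, which I would transcribe.
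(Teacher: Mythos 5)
Your handling of the ``In particular'' part coincides with the paper's proof: taking $u=0$ as a competitor gives $\int_{\Omega}W\zeta\cdot\zeta\,\phi\,\text{d}\mu\geq0$, discarding the (nonnegative) minimum gives $\int_{\Omega}W\zeta\cdot\zeta\,\phi\,\text{d}\mu\leq\limsup_{n}\left\Vert \dn\right\Vert _{L^{1}\left(\Omega\right)}^{-1}\int_{\Omega}\dn^{\prime}\zeta\cdot\zeta\,\phi\,\text{d}x$, and the pointwise bounds $\dn^{\prime}\zeta\cdot\zeta\leq\left|\dn\right|_{F}\left|\zeta\right|^{2}$, respectively $\dn^{\prime}\zeta\cdot\zeta\leq d^{-\frac{1}{2}}\left|\dn\right|_{F}\left|\zeta\right|^{2}$ in the isotropic case, give the two constants; the paper runs this last step through the eigenvalues of $\dn^{\prime}$ while you do a direct scalar computation, but the two are equivalent. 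For the identity itself the paper gives no argument: it invokes \citep[Lemma 4]{capdeboscq-vogelius-06} ``mutatis mutandis''. Your sketch of that adaptation is sound as far as it goes --- the Euler--Lagrange algebra reducing the identity to the identification of $\lim_{n}\left\Vert \dn\right\Vert _{L^{1}\left(\Omega\right)}^{-1}\int_{\Omega}\left(\gamma_{n}-\gamma_{0}\right)\zeta\cdot\nabla u_{n}^{\ast}\,\phi\,\text{d}x$, and the equation for the discrepancy $\rho_{n}=u_{n}^{\ast}+W_{n}^{\zeta}$ with source $q_{n}\cdot\nabla\phi$, are correct --- but the decisive step, that the $\rho_{n}$-contribution vanishes after division by $\left\Vert \dn\right\Vert _{L^{1}\left(\Omega\right)}$, is only announced as ``the machinery of \secref{ProofPolar} localised near $\{\phi>0\}$'' and never carried out; also, when $\phi$ vanishes the weighted minimum need not be attained in $H_{\#}^{1}(C)$, so one should argue with the infimum or minimising sequences. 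Since this is exactly the content the paper delegates to the citation, you have not diverged from the paper, but your write-up does not close that step either.

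One concrete warning: your final claim, that the positive semi-definiteness of $M=D-W$ follows by transcribing a dual (``Voigt'') maximisation, cannot work as stated. In the present setting $M$ is in general not positive semi-definite: the elliptic-inclusion example of \secref{An-example} with $\lambda_{n}\to0$ yields $M=-\tfrac{1}{\sqrt{2}}I_{2}$. The clause about $M$ in the statement is not addressed by the paper's proof either (only the bounds on $W$ are proved there), and is best read as referring to $W$; a dual variational argument for $M\geq0$ would necessarily fail in the presence of insulating inclusions, so this part of your plan should be dropped rather than ``transcribed''.
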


\begin{proof}
The derivation of the identity is, mutatis mutandis, done in \citep[lemma 4]{capdeboscq-vogelius-06}.
Choosing $u=0$, we find 
\begin{align*}
 & \frac{1}{\left\Vert \dn\right\Vert _{L^{1}\left(\Omega\right)}}\min_{u\in H_{\#}^{1}\left(C\right)^{d}}\int_{\Omega}\gamma_{n}\left(\nabla u-\gamma_{n}^{-1}\left(\gamma_{n}-\gamma_{0}\right)\zeta\right)\cdot\left(\nabla u-\gamma_{n}^{-1}\left(\gamma_{n}-\gamma_{0}\right)\zeta\right)\phi\text{\,d}x\\
\leq & \frac{1}{\left\Vert \dn\right\Vert _{L^{1}\left(\Omega\right)}}\min_{u\in H_{\#}^{1}\left(C\right)^{d}}\int_{\Omega}\dn^{\prime}\phi\text{\,d}x,
\end{align*}
and therefore 
\[
\int_{\Omega}W\zeta\cdot\zeta\phi d\mu\geq0.
\]
Since the second term is negative, we find 
\[
\int_{\Omega}W\zeta\cdot\zeta\phi d\mu\leq\lim_{n\to\infty}\frac{1}{\left\Vert \dn\right\Vert _{L^{1}\left(\Omega\right)}}\int_{\Omega}\phi\dn^{\prime}\zeta\cdot\zeta\text{\,d}x.
\]
 We compute 
\[
\frac{1}{\left\Vert \dn\right\Vert _{L^{1}\left(\Omega\right)}}\int_{\Omega}\phi\dn^{\prime}\zeta\cdot\zeta\text{\,d}x=\int_{\Omega}\phi\frac{\dn^{\prime}\zeta\cdot\zeta}{\left|\dn\right|_{F}}\frac{\left|\dn\right|_{F}}{\left\Vert \dn\right\Vert _{L^{1}\left(\Omega\right)}}\text{\,d}x\leq\int_{\Omega}\phi\frac{\dn^{\prime}\zeta\cdot\zeta}{\left|\dn^{\prime}\right|_{F}}\frac{\left|\dn\right|_{F}}{\left\Vert \dn\right\Vert _{L^{1}\left(\Omega\right)}}\text{\,d}x,
\]
and if $\lambda_{1}\leq\ldots\leq\lambda_{d}$ are the eigenvalues
of $d_{n}^{\prime}$ at $x$, 
\[
\frac{\dn^{\prime}\zeta\cdot\zeta}{\left|\dn^{\prime}\right|_{F}}\leq\left|\zeta\right|^{2}\frac{\lambda_{d}}{\sqrt{\sum_{i=1}^{d}\lambda_{i}^{2}}}\leq\left|\zeta\right|^{2}\begin{cases}
1 & \text{in general,}\\
\frac{1}{\sqrt{d}} & \text{if }\lambda_{1}=\ldots=\lambda_{d}
\end{cases}.
\]
All eigenvalues are equal when $\gamma_{0}$ and $\gamma_{n}$ are
isotropic, therefore 
\[
\int_{\Omega}W\zeta\cdot\zeta\phi d\mu\leq\lim_{n\to\infty}\frac{1}{\left\Vert \dn\right\Vert _{L^{1}\left(\Omega\right)}}\int_{\Omega}\phi\dn^{\prime}\zeta\cdot\zeta\text{\,d}x\leq C\left|\zeta\right|^{2}\int_{\Omega}\phi\text{d}\mu,
\]
with $C=1$ in general and $C=d^{-\frac{1}{2}}$ in isotropic media.
\end{proof}

\section{\label{sec:An-example}An example}

We revisit an example already considered in \citep{brulh-hanke-vogelius-03,capdeboscq-vogelius-04},
namely, elliptic inclusions. In a domain 
\[
\Omega=\left\{ (x,y)\subset\mathbb{R}^{2}:\frac{x^{2}}{\cosh^{2}(2)}+\frac{y^{2}}{\sinh^{2}(2)}\leq1\right\} ,
\]
consider heterogeneities in a homogeneous medium located in the set

\[
E_{n}=\left\{ (x,y)\subset\mathbb{R}^{2}:\frac{x^{2}}{\cosh^{2}\left(n^{-1}\right)}+\frac{y^{2}}{\sinh^{2}\left(n^{-1}\right)}\leq1\right\} ,
\]
which collapses to the line segment $(-1,1)\times\left\{ 0\right\} $
as $n\to\infty.$ Consider an isotropic inhomogeneity, with conductivity
\[
\gamma_{n}\left(x\right)=\begin{cases}
1 & x\in\Omega\setminus Q_{n}\\
\lambda_{n} & x\in Q_{n},
\end{cases}
\]
where $\lambda_{n}\in\left(0,1\right)\cup\left(1,\infty\right)$.
In this case, 
\[
d_{n}=\left(\lambda_{n}+\lambda_{n}^{-1}\right)I_{2}.
\]
and $\left\Vert d_{n}\right\Vert _{L^{1}\left(\Omega\right)}\to0$
means $\max\left(n^{-1}\lambda_{n},n^{-1}\lambda_{n}^{-1}\right)\to0.$
The solution $u_{n}^{i}$ to the equation 
\begin{eqnarray}
-\nabla\cdot\big(\gamma_{n}\nabla u_{n}^{i}\big) & = & 0\quad\text{in}\quad\Omega\nonumber \\
u_{n}^{i} & = & x_{i}\quad\text{on}\quad\partial\Omega\label{eq:expc-1}
\end{eqnarray}
can be computed explicitly in elliptic coordinates. In particular
we find that
\[
\frac{1}{\left|d_{n}\right|_{F}}\left(1-\gamma_{n}\right)\partial_{x_{j}}w_{n}^{i}=\frac{1}{\sqrt{2}}\frac{\lambda_{n}}{1+\lambda_{n}^{2}}\left(1-\gamma_{n}\right)1_{E_{n}}\left(\partial_{x_{j}}u_{n}^{i}-\delta_{ij}\right)=\delta_{ij}\ell_{n}^{i}1_{E_{n}},
\]
with 
\begin{align*}
\ell_{n}^{1} & =O\left(\frac{\lambda_{n}}{n}\right)\text{ and }\ell_{n}^{2}=\frac{1}{\sqrt{2}}+O\left(\frac{\lambda_{n}}{n}\right)\text{ when }\lambda_{n}>1,\\
\ell_{n}^{1} & =O\left(\frac{1}{n^{2}}\right)\text{ and }\ell_{n}^{2}=O\left(\frac{1}{n\lambda_{n}}\right)\text{ when }0<\lambda_{n}<1,
\end{align*}
As a consequence, when $n\lambda_{n}\to0$ with $\lambda_{n}\to\infty$
\[
W=\begin{pmatrix}0 & 0\\
0 & \frac{1}{\sqrt{2}}
\end{pmatrix},\quad D=\begin{pmatrix}\frac{1}{\sqrt{2}} & 0\\
0 & \frac{1}{\sqrt{2}}
\end{pmatrix}\quad M=\begin{pmatrix}\frac{1}{\sqrt{2}} & 0\\
0 & 0
\end{pmatrix},
\]
Whereas when $\lambda_{n}\to0,$we obtain 
\[
W=\begin{pmatrix}0 & 0\\
0 & 0
\end{pmatrix},\quad D=-\begin{pmatrix}\frac{1}{\sqrt{2}} & 0\\
0 & \frac{1}{\sqrt{2}}
\end{pmatrix}\quad M=-\begin{pmatrix}\frac{1}{\sqrt{2}} & 0\\
0 & \frac{1}{\sqrt{2}}
\end{pmatrix},
\]
and both results corresponds extreme cases with respect to the isotropic
pointwise bounds derived in \lemref{-ReussBounds}.
\begin{acknowledgement*}
This study contributes to the IdEx Université de Paris ANR-18-IDEX-0001.
\end{acknowledgement*}
\bibliographystyle{plainnat}
\bibliography{bibinclusions}

\appendix

\specialsection{\label{app:AppendixA}Additional proofs}
\begin{proof}[Proof of \lemref{defmu}]
The convergence \eqref{defnmu} is a direct consequence of the Banach--Alaoglu's
theorem and the continuous embedding between $L^{1}\left(\Omega\right)\hookrightarrow C^{0}\left(\overline{\Omega}\right)^{*}$,
where we have identified the continuous dual space of $C^{0}\left(\overline{\Omega}\right)$
as the space of bounded Radon measures on $\Omega$. We know from
\eqref{notlowalpha} that $\left|\left(\gamma_{0}-\gamma_{n}\right)_{ij}\right|\leq\left|d_{n}\right|_{F},$therefore
$\left\Vert \left(\gamma_{n}-\gamma_{0}\right)_{ij}\right\Vert _{L^{1}(\Omega)}\leq\left\Vert \dn\right\Vert _{L^{1}(\Omega)}$.
We may extract a subsequence in which
\[
\frac{1}{\left\Vert \dn\right\Vert _{L^{1}(\Omega)}}\left(\gamma_{n}-\gamma_{0}\right)_{ij}\overset{*}{{\rightharpoonup}}\text{d}\mathcal{D}_{ij}
\]
in the space of bounded vector Radon measures.
\begin{align*}
\int_{\Omega}\phi\,d\mathcal{D}_{ij} & =\lim_{n\to\infty}\int_{\Omega}\frac{1}{\left\Vert \dn\right\Vert _{L^{1}(\Omega)}}\left(\gamma_{0}-\gamma_{n}\right)_{ij}\phi\,\text{d}x\\
 & \leq\lim_{n\to\infty}\int_{\Omega}\frac{1}{\left\Vert \dn\right\Vert _{L^{1}(\Omega)}}\left|d_{n}\right|_{F}\phi\,\text{d}x\\
 & \leq\lim_{n\to\infty}\left(\int_{\Omega}\frac{1}{\left\Vert \dn\right\Vert _{L^{1}(\Omega)}}\left|d_{n}\right|_{F}\phi^{2}\,\text{d}x\right)^{\frac{1}{2}}\\
 & =\left(\int_{\Omega}\phi^{2}\,\text{d}\mu\right)^{\frac{1}{2}},
\end{align*}
where we used Cauchy-Schwarz in the penultimate line. It follows that
the functional 
\[
\phi\to\int_{\Omega}\phi\cdot\text{d}\mathcal{D}_{ij}
\]
 may be extended to a bounded linear functional on $\left[L^{2}(\Omega,\text{d}\mu)\right]^{d}$.
Hence, by Riesz's Representation Theorem, we may identify 
\[
\text{d}\mathcal{D}_{ij}=D_{ij}\text{d}\mu
\]
for some function $D_{ij}\in L^{2}(\Omega,\text{d}\mu)$, which is
our statement.
\end{proof}

\specialsection{\label{app:proofape}Proof of \propref{EnergyEstimateGal}}
\begin{proof}
We write 
\[
d_{n}^{\prime}=\left(\gamma_{n}-\gamma_{0}\right)\gamma_{n}^{-1}\left(\gamma_{n}-\gamma_{0}\right),
\]
and note that $d_{n}^{\prime}\leq d_{n}.$ Note that $w_{n}$ is the
unique minimiser over $X$ of the functional 
\[
J(w)=\int_{\Omega}\gamma_{n}\left(\nabla w+\gamma_{n}^{-1}\left(\gamma_{n}-\gamma_{0}\right)\nabla u_{0}\right)\cdot\left(\nabla w+\gamma_{n}^{-1}\left(\gamma_{n}-\gamma_{0}\right)\nabla u_{0}\right)\text{d}x,
\]
 Clearly, $J\left(w_{n}\right)\geq0$, thus 
\[
-\int_{\Omega}\gamma_{n}\nabla w_{n}\cdot\nabla w_{n}\,\text{d}x+2\int_{\Omega}\gamma_{n}\left(\nabla w_{n}+\gamma_{n}^{-1}\left(\gamma_{n}-\gamma_{0}\right)\nabla u_{0}\right)\cdot\nabla w_{n}\text{d}x+\int_{\Omega}\dn^{\prime}\nabla u_{0}\cdot\nabla u_{0}\text{d}x\geq0,
\]
which shows 
\begin{equation}
\int_{\Omega}\gamma_{n}\nabla w_{n}\cdot\nabla w_{n}\,\text{d}x\leq\int_{\Omega}\dn^{\prime}\nabla u_{0}\cdot\nabla u_{0}\,\text{d}x.
\end{equation}
Thus, as $u_{0}\in C^{1}(K)$
\[
\int_{\Omega}\gamma_{n}(x)\nabla w_{n}\cdot\nabla w_{n}\,\text{d}x\leq\left\Vert \nabla u_{0}\right\Vert _{L^{\infty}(K)}^{2}\int_{\Omega}\left|\dn\right|_{F}\,\text{d}x.
\]
We now turn to the second estimate. Using Cauchy--Schwarz we find
\begin{align}
 & \left\Vert \left(\gamma_{n}-\gamma_{0}\right)\nabla w_{n}\right\Vert _{L^{1}(\Omega)}\label{measureenergy}\\
 & =\int_{\Omega}\sqrt{\left|\left(\gamma_{n}-\gamma_{0}\right)\gamma_{n}^{-\frac{1}{2}}\gamma_{n}^{\frac{1}{2}}\nabla w_{n}\right|^{2}\text{d}x}\nonumber \\
 & \leq\sqrt{\int_{\Omega}\left|\left(\gamma_{n}-\gamma_{0}\right)\gamma_{n}^{-1}\left(\gamma_{n}-\gamma_{0}\right)\right|_{F}\text{d}x}\sqrt{\int_{\Omega}\gamma_{n}\nabla w_{n}\cdot\nabla w_{n}\text{d}x}\nonumber \\
 & \leq\left\Vert \dn\right\Vert _{L^{1}(\Omega)}\left\Vert \nabla u_{0}\right\Vert _{L^{\infty}(K)}.\nonumber 
\end{align}
Since $\frac{1}{\left\Vert \dn\right\Vert _{L^{1}(\Omega)}}\left(\gamma_{n}-\gamma_{0}\right)\nabla w_{n}$
is uniformly bounded in $L^{1}(\Omega)$, we may extract a subsequence
in which
\[
\frac{1}{\left\Vert \dn\right\Vert _{L^{1}(\Omega)}}\left(\gamma_{n}-\gamma_{0}\right)\nabla w_{n}\overset{*}{{\rightharpoonup}}\text{d}\text{\ensuremath{\mathscr{M}}}
\]
in the space of bounded vector Radon measures. Moreover, for any $\Psi\in C^{0}(\overline{\Omega};\mathbb{R}^{d})$,
\begin{eqnarray*}
\int_{\Omega}\Psi\cdot\text{d}\mathscr{M} & = & \lim_{n\to\infty}\int_{\Omega}\frac{1}{\left\Vert \dn\right\Vert _{L^{1}(\Omega)}}\left(\gamma_{n}-\gamma_{0}\right)\nabla w_{n}\cdot\Psi\text{d}x\\
 & \leq & \lim_{n}\left(\frac{1}{\left\Vert \dn\right\Vert _{L^{1}(\Omega)}}\int_{\Omega}\gamma_{n}\nabla w_{n}\cdot\nabla w_{n}\text{d}x\right)^{\frac{1}{2}}\left(\frac{1}{\left\Vert \dn\right\Vert _{L^{1}(\Omega)}}\int_{\Omega}\dn^{\prime}\Psi\cdot\Psi\text{d}x\right)^{\frac{1}{2}}\\
 & \leq & C\left(\int_{\Omega}\left|\Psi\right|^{2}\text{d}\mu\right)^{\frac{1}{2}}
\end{eqnarray*}
thanks to the estimate above. As a consequence of this estimate, it
follows that the functional 
\[
\Psi\to\int_{\Omega}\Psi\cdot\text{d}\text{\ensuremath{\mathscr{M}}}
\]
 may be extended to a bounded linear functional on $\left[L^{2}(\Omega,\text{d}\mu)\right]^{d}$.
Hence, by Riesz's Representation Theorem, we may identify 
\[
\text{d}\text{\ensuremath{\mathscr{M}}}=M\text{d}\mu
\]
for some function $\mathcal{M}\in\left[L^{2}(\Omega,\text{d}\mu)\right]^{d}$,
which is our statement.
\end{proof}

\specialsection{\label{app:polarstream}Proof of Proposition~\ref{prop:polarstream}}
\begin{rem*}
Note that if $\Omega^{\prime}$ is simply connected, $Fb_{n}=Fb_{0}=0$.
Remark that 
\[
\frac{1}{\left|\Gamma_{i}\right|}\int_{\Gamma_{i}}\gamma_{0}\nabla u_{0}\cdot n\text{d}\sigma=\int_{\Gamma_{i}}\gamma_{n}\nabla u_{n}\cdot n\text{d}\sigma.
\]
Let $I_{i}$ be the solution of 
\[
\divx\left(\gamma_{0}\nabla I_{i}\right)=0\text{ on }\Omega^{\prime}\text{ and }I_{i}=1\text{ on }\Gamma_{i}.
\]
By an integration by parts, 
\begin{align*}
\int_{\Gamma_{i}}\gamma_{0}\nabla u_{0}\cdot n\text{d}\sigma-\int_{\Gamma_{i}}\gamma_{n}\nabla u_{n}\cdot n\text{d}\sigma & =\int_{\Omega}\gamma_{0}\nabla u_{0}\cdot\nabla I_{i}\text{d}x-\int_{\Omega}\gamma_{n}\nabla u_{n}\cdot\nabla I_{i}\text{d}x.\\
 & =\int_{\Omega}gI_{i}\text{d}\sigma-\int_{\Omega}gI_{i}\text{d}\sigma\\
 & =0.
\end{align*}
Thus, imposing that $g\in H^{\frac{1}{2}}\left(\partial\Omega\right)$
is such that $Fb_{0}=0$, which corresponds to $N-1$ contraints in
an infinite dimensional space and therefore is not a loss of generality,
this implies that $Fb_{n}=0.$ We shall make that assumption in the
rest of this section.
\end{rem*}
\begin{proof}
By the inequality in (\ref{eq:equivsigmadn}), we have
\begin{eqnarray*}
\frac{\left\Vert \Sigma_{n}\right\Vert _{L^{1}(\Omega)}}{\left\Vert \dn\right\Vert _{L^{1}(\Omega)}} & \leq & \left(\max_{\overline{\Omega}}\lambda_{d}\left(\gamma_{0}^{-1}\right)\right)^{2},
\end{eqnarray*}
thus taking a convergent subsequence of $\frac{\left\Vert \Sigma_{n}\right\Vert _{L^{1}(\Omega)}}{\left\Vert \dn\right\Vert _{L^{1}(\Omega)}}\to a_{0}$
and a possible further extraction of the subsequence $\frac{1}{\left\Vert \Sigma_{n}\right\Vert _{L^{1}(\Omega)}}\Big(\sigma_{n}-\sigma_{0}\Big)\nabla\phi_{n}$,
\corref{lowcontrastbyhighcontrast-1} implies that, if $\Xi\in C^{0}\big(\overline{\Omega},\mathbb{R}^{2}\big)$
is an arbitrary vector field,
\begin{eqnarray*}
 &  & \lim_{n\to\infty}\int_{\Omega}\left(\frac{1}{\left\Vert \dn\right\Vert _{L^{1}(\Omega)}}\Big(\sigma_{0}-\sigma_{n}\Big)\nabla\varphi_{n}\cdot\Xi\right)\,\text{d}x\\
 & = & \lim_{n\to\infty}\int_{\Omega}\left(\frac{\left\Vert \Sigma_{n}\right\Vert _{L^{1}(\Omega)}}{\left\Vert \dn\right\Vert _{L^{1}(\Omega)}}\frac{1}{\left\Vert \dn\right\Vert _{L^{1}(\Omega)}}\Big(\sigma_{n}-\sigma_{0}\Big)\nabla\varphi_{n}\cdot\Xi\right)\,\text{d}x\\
 & = & a_{0}\lim_{n\to\infty}\int_{\Omega}\left(\frac{1}{\left\Vert \dn\right\Vert _{L^{1}(\Omega)}}\Big(\sigma_{0}-\sigma_{n}\Big)\nabla\varphi_{n}\cdot\Xi\right)\,\text{d}x\\
 & = & a_{0}\int_{\Omega}\tilde{N}\nabla\psi_{0}\cdot\Xi\,\text{d}\nu\\
 & = & \int_{\Omega}N\nabla\psi_{0}\cdot\Xi\,\text{d}\mu.
\end{eqnarray*}
Where $N=a_{0}\frac{\text{d\ensuremath{\nu}}}{\text{d}\mu}\tilde{N}$
belongs to $L^{2}(\Omega;\text{d\ensuremath{\mu)}}$. Alternatively
testing against $\left(J^{T}\gamma_{0}\right)\Xi$ we find 
\begin{align*}
 & \int_{\Omega}\frac{1}{\left\Vert \dn\right\Vert _{L^{1}(\Omega)}}\Big(\sigma_{0}-\sigma_{n}\Big)\nabla\psi_{n}\cdot\left(J^{T}\gamma_{0}\right)\Xi\,\text{d}x\\
= & \int_{\Omega}\frac{1}{\left\Vert \dn\right\Vert _{L^{1}(\Omega)}}\left(J^{T}\gamma_{0}^{-1}\Big(\gamma_{n}-\gamma_{0}\Big)\gamma_{n}^{-1}J\right)\left(J^{T}\gamma_{n}\nabla u_{n}\right)\cdot\left(J^{T}\gamma_{0}\right)\Xi\,\text{d}x\\
= & \int_{\Omega}\frac{1}{\left\Vert \dn\right\Vert _{L^{1}(\Omega)}}J^{T}\gamma_{0}^{-1}\Big(\gamma_{0}-\gamma_{n}\Big)\nabla u_{n}\cdot J^{T}\gamma_{0}\Xi\,\text{d}x.\\
= & \int_{\Omega}\frac{1}{\left\Vert \dn\right\Vert _{L^{1}(\Omega)}}\Big(\gamma_{0}-\gamma_{n}\Big)\nabla u_{n}\cdot\Xi\,\text{d}x
\end{align*}
whereas 
\begin{align*}
 & \int_{\Omega}\frac{1}{\left\Vert \dn\right\Vert _{L^{1}(\Omega)}}\Big(\sigma_{n}-\sigma_{0}\Big)\nabla\psi_{0}\cdot\left(J^{T}\gamma_{0}\right)\Xi\,\text{d}x\\
 & \int_{\Omega}\frac{1}{\left\Vert \dn\right\Vert _{L^{1}(\Omega)}}\gamma_{0}\gamma_{n}^{-1}\Big(\gamma_{0}-\gamma_{n}\Big)\nabla u_{0}\cdot\Xi\,\text{d}x.\\
 & \int_{\Omega}\frac{1}{\left\Vert \dn\right\Vert _{L^{1}(\Omega)}}\left(\Big(\gamma_{0}-\gamma_{n}\Big)+\dn\right)\nabla u_{0}\cdot\Xi\,\text{d}x.
\end{align*}
We write $\mathcal{D}$ as the limit limiting tensor corresponding
to $\left\Vert \dn\right\Vert _{L^{1}(\Omega)}^{-1}\dn$ in $L^{2}(\Omega,d\mu)^{d\times d}$,
that is, 
\[
\lim_{n\to\infty}\int_{\Omega}\frac{\dn}{\left\Vert \dn\right\Vert _{L^{1}(\Omega)}}\nabla u_{0}\cdot\Xi\,\text{d}x=\int_{\Omega}\mathcal{D}\nabla u_{0}\cdot\Xi\,\text{d}\mu
\]
Altogether, we have obtained 
\begin{align*}
\lim_{n\to\infty}\int_{\Omega}\frac{1}{\left\Vert \dn\right\Vert _{L^{1}(\Omega)}}\Big(\gamma_{0}-\gamma_{n}\Big)\nabla w_{n}\cdot\Xi\text{dx} & =-\int_{\Omega}\mathcal{D}\nabla u_{0}\cdot\Xi\,\text{d}\mu+\int_{\Omega}N\nabla\psi_{0}\cdot\left(J^{T}\gamma_{0}\right)\Xi\,\text{d}\mu\\
 & =\int_{\Omega}\left(\left(\gamma_{0}J\right)N\left(\gamma_{0}J\right)^{T}-\mathcal{D}\right)\nabla u_{0}\cdot\Xi\,\text{d}\mu
\end{align*}
which is concludes our proof.
\end{proof}

\end{document}